\newtheorem{theorem}{Theorem}[section]
\newtheorem*{theorem*}{Theorem}
\newtheorem{lemma}[theorem]{Lemma}
\newtheorem{proposition}[theorem]{Proposition}
\newtheorem{corollary}[theorem]{Corollary}
\theoremstyle{definition}
\theoremstyle{remark}
\newtheorem{remark}[theorem]{Remark}
\newtheorem{remarkthm}{Remark}[theorem]
\numberwithin{equation}{section}
\newcommand{\abs}[1]{\lvert#1\rvert}
\newcommand{\Z}{\mathbb{Z}}
\newcommand{\opn}{\operatorname}
\newcommand{\wt}{\widetilde}
\newcommand{\scP}{\mathscr{P}}
\newcommand{\scS}{\mathscr{S}}
\newcommand{\testleft}{\leftarrow\!\shortmid}
\begin{document}

\title[Some Torsion Classes in $A^*_{PGL_n}$ and $H^*_{PGL_n}$]{Some Torsion Classes in the Chow Ring and Cohomology of $BPGL_n$}

\author{Xing Gu}

\address{Max Planck Institute for Mathematics, Vivatsgasse 7, 53111 Bonn, Germany.}
\email{gux2006@outlook.com}
\thanks{Part of this paper was written when the author was a research fellow at the University of Melbourne, Australia, supported by the Australian research council, and the rest of this paper was written during a visit to the Max Planck Institute for Mathematics, Germany. The author is grateful to the University of Melbourne, the Australian research council and the Max Planck Institute for Mathematics for their supports.}


\subjclass[2010]{14C15, 14L30, 55R35, 55R40, 55T10.}


\keywords{Classifying Stacks/Spaces, Projective Linear Groups, Chow Ring, Integral Cohomology,  Serre Spectral Sequence, Steenrod Reduced Power Operations, Transfer map.}

\begin{abstract}
 In the integral cohomology ring of the classifying space of the projective linear group $PGL_n$ (over $\mathbb{C}$),  we find a collection of $p$-torsion classes $y_{p,k}$ of degree $2(p^{k+1}+1)$ for any odd prime divisor $p$ of $n$, and $k\geq 0$.

 If, in addition, $p^2\nmid n$, there are $p$-torsion classes $\rho_{p,k}$ of degree $p^{k+1}+1$ in the Chow ring of the classifying stack of $PGL_n$, such that the cycle class map takes $\rho_{p,k}$ to $y_{p,k}$.

 We present an application of the above classes regarding Chern subrings.
 \end{abstract}

\maketitle
\tableofcontents

\section{Introduction}\label{Introduction}
Let $G$ be an algebraic (resp. topological) group and $\mathbf{B}G$ be the classifying stack (resp. space) of $G$. We follow Vistoli's notations \cite{Vi} and let $A^*_{G}$ (resp. $H^*_{G}$) denote the Chow ring (resp. singular cohomology ring with coefficient ring $\Z$.) of $\mathbf{B}G$.

The Chow ring of $\mathbf{B}G$ is first introduced by Totaro \cite{Tot}, as an algebraic analog of the integral cohomology of the classifying space of a topological group. Based on this work, Edidin and Graham \cite{Ed} developed equivariant intersection theory, of which the main object of interest is the equivariant Chow ring, an algebraic analog of Borel's equivariant cohomology theory.

When the algebraic group $G$ is over the base field of complex numbers $\mathbb{C}$, it has an underlying topological group which we also denote by $G$, and there is a cycle class map
\[\opn{cl}: A^*_G\rightarrow H^{*}_G\]
from $A^i_G$ to $H^{2i}_G$, which plays a crucial role in this paper. The cycle class map factors though the even degree subgroup of $MU^*(\mathbf{B}G)\otimes_{MU^*}\mathbb{Z}$
\begin{equation}\label{refinedcl}
    \opn{cl}: A^*_G\xrightarrow{\tilde{\opn{cl}}} MU^*(\mathbf{B}G)\otimes_{MU^*}\mathbb{Z}\rightarrow H^{*}_G
\end{equation}
where $\tilde{\opn{cl}}$ is called the \textit{refined cycle class map}.

The Chow rings $A^*_G$ have been computed by Totaro \cite{Tot} for $G=GL_n$, $SL_n$, $Sp_n$. (Hereafter, the base field is always $\mathbb{C}$ except for $O_n$, $SO_n$ and $Spin_n$.) For $G=SO_{2n}$ by Field \cite{Fi}, for $G=O_n$ and $G=SO_{2n+1}$ by Totaro \cite{Tot} and Pandharipande \cite{Pa}, for $G=\opn{Spin}_7$ by Guillot \cite{Gui2},
for $G=\opn{Spin}_8$ by Rojas \cite{Ro}, for the semisimple simply connected group of type $G_2$ by Yagita \cite{Ya}. The case that $G$ is finite has been considered by Guillot \cite{Gui} and \cite{Gui1}. In \cite{Ro1}, Rojas and Vistoli provided a unified approach to the known computations of $A^*_G$ for the classical groups $G=GL_n$, $SL_n$, $Sp_n$, $O_n$, $SO_n$.


Let $PGL_n$ be the quotient group of $GL_n$ over $\mathbb{C}$, modulo scalars. The case $G=PGL_n$ appears considerably more difficult than the cases mentioned above. The mod $p$ cohomology for some special choices of $n$ is considered by Toda \cite{To}, Kono and Mimura \cite{Ko}, Vavpeti{\u c} and Viruel \cite{Va}. In \cite{Gu}, the author computed the integral cohomology of $PGL_n$ for all $n>1$ in degree less then or equal to $10$ and found a family of distinguished elements $y_{p,0}$ of dimension $2(p+1)$ for any prime divisor $p$ of $n$ and any $k\geq 0$. (In the papers above, what are actually considered are the projective unitary groups $PU_n$, of which the classifying spaces are homotopy equivalent to those of $PGL_n$.) In \cite{Ve}, Vezzosi almost fully determined the ring structure of $A^*_{PGL_3}$. In \cite{Vi}, Vistoli determined the graded abelian group structures of $A^*_{PGL_p}$ and $H^*_{PGL_p}$ for $p$ an odd prime, and partially determined their ring structures. In Kameko and Yagita \cite{Ka}, the additive structure of $A^*_{PGL_p}$ is obtained independently, as a corollary of the main results.

Classes in the rings $A^*_{PGL_n}$ (resp. $H^*_{PGL_n}$) are important invariants for sheaves of Azumaya algebras of degree $n$ (resp. principal $PGL_n$-bundles.) For instance, the ring $H^*_{PGL_n}$ plays a key role in the topological period-index problem, considered by Antieau and Williams \cite{An}, \cite{An1},  Crowley and Grant \cite{Cr}, and the author in \cite{Gu1}, \cite{Gu2}. This suggests that the $A^*_{PGL_n}$ could be of use in the study of the (algebraic) period-index problem due to J.-L. Colliot-Th{\'e}l{\`e}ne \cite{Co}. The cohomology ring  $H^*_{PGL_n}$ also appears in the study of anomalies in particle physics such as \cite{cordova2020anomalies}, \cite{davighi2019global} and
\cite{Ga}. Antieau \cite{antieau2015integral} and  Kameko \cite{Ka2} considered the cohomology of $\mathbf{B}G$ for some finite cover $G$ of $PGL_{p^2}$ to construct counterexamples for the integral Tate conjecture.

In this paper we study the torsion elements in $A^*_{PGL_n}$ and $H^*_{PGL_n}$. One may easily find $H^i_{PGL_n}=0$ for $i=1,2$ and $H^3_{PGL_n}\cong\mathbb{Z}/n$. Therefore we have a map
\begin{equation}\label{eq:chi}
\chi:\mathbf{B}PGL_n\rightarrow K(\mathbb{Z},3)
\end{equation}
representing the ``canonical'' (in the sense to be explained in Section \ref{SecSpecSeq}) generator $x_1$ of $H^3_{PGL_n}$, where $K(\mathbb{Z},3)$ denotes the Eilenberg-Mac Lane space with the $3$rd homotopy group $\mathbb{Z}$.

In principle, the cohomology of $K(\mathbb{Z},3)$ and more generally of $K(\pi, n)$ with $n>0$ and $\pi$ a finitely generated abelian group are determined by Cartan and Serre \cite{Ca}. Based on their work, the author gave an explicit description of the cohomology of $K(\mathbb{Z},3)$ in \cite{Gu}, which is outlined as follows. Let $p$ be a prime number. The $p$-local cohomology ring of $K(\mathbb{Z},3)$ is generated by the fundamental class $x_1$ and $p$-torsion classes of the form $y_{p,I}$, where $I=(i_m,\cdots,i_1)$ is an ordered sequence of integers $0\leq i_m<\cdots<i_1$. Here $m$ is called the \textit{length} of $I$ and denoted by $l(I)$. For $I=(k)$, we simply write $y_{p,k}$ for $y_{p,I}$. The degree of $y_{p,I}$ is
\begin{equation}\label{eq:deg}
\opn{deg}(y_{p,I})=1+\sum_{j=1}^m(2p^{i_j+1}+1).
\end{equation}
In \cite{Gu} the author showed that the images of $y_{p,0}$ in $H^{2(p+1)}_{PGL_n}$ via $\chi$ are nontrivial $p$-torsion classes, for all prime divisors $p$ of $n$. In this paper we generalize this to $y_{p,k}$ for $k>0$ and under some restrictions, to Chow rings. For simplicity we omit the notation $\chi^*$ and let $y_{p,I}$ denote $\chi^*(y_{p,I})$ whenever there is no risk of ambiguity.

\begin{theorem}\label{thm2p^k+2}
\begin{enumerate}
\item In $H^{2(p^{k+1}+1)}_{PGL_n}$, we have $p$-torsion classes $y_{p,k}\neq 0$ for all odd prime divisors $p$ of $n$ and $k\geq 0$.

\item If, in addition, we have $p^2\nmid n$, then there are $p$-torsion classes $\rho_{p,k}\in A^{p^{k+1}+1}_{PGL_n}$ mapping to $y_{p,k}$ via the cycle class map.
\end{enumerate}

\end{theorem}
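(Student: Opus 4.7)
The plan is to work with the Serre spectral sequence of the fibration
\[
B\mathbb{G}_m \longrightarrow BGL_n \longrightarrow BPGL_n
\]
arising from the central extension $1\to\mathbb{G}_m\to GL_n\to PGL_n\to 1$. Let $u\in H^2(B\mathbb{G}_m;\mathbb{Z})$ denote the standard generator; the initial transgression is $\tau(u)=x_1\in H^3_{PGL_n}$.

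For Part (1), I would pass to $\mathbb{F}_p$ coefficients and apply Kudo's transgression theorem iteratively to obtain that $u^{p^{k+1}}$ is transgressive with
\[
\tau(u^{p^{k+1}}) \;=\; \pm\, P^{p^k}P^{p^{k-1}}\cdots P^1(x_1) \;=:\; Q_k \;\in\; H^{2p^{k+1}+1}(BPGL_n;\mathbb{F}_p).
\]
Since the abutment $H^*_{GL_n;\mathbb{F}_p}\cong\mathbb{F}_p[c_1,\dots,c_n]$ vanishes in odd degrees while $Q_k$ is in odd degree, $Q_k$ must die in the spectral sequence; the Kudo formula realizes this death via $u^{p^{k+1}}$, whence $Q_k\neq 0$. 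By the Cartan--Serre description of $H^*(K(\mathbb{Z},3);\mathbb{F}_p)$ reviewed in Section \ref{SecKZ3}, the mod-$p$ reduction of $y_{p,k}$ agrees with $\beta(Q_k)$ up to a unit, so it suffices to show $\beta(Q_k)\neq 0$ in $H^*(BPGL_n;\mathbb{F}_p)$, or equivalently that $Q_k$ does not lift to integer cohomology. I would deduce this from the integral Serre spectral sequence, in which $d_3(u^{p^{k+1}})=p^{k+1}u^{p^{k+1}-1}x_1$ is nonzero; any putative integer lift of $Q_k$ would force an inconsistency with the torsion-freeness of $H^*_{GL_n;\mathbb{Z}}$ through its interaction with this differential.

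For Part (2), the hypothesis $p^2\nmid n$ makes $x_1\in H^3_{PGL_n}$ of $p$-primary order exactly $p$, providing enough rigidity at $p$ to realize $y_{p,k}$ as an algebraic cycle class. I would use the factorization \eqref{refinedcl} of the cycle class map through $MU^*(\mathbf{B}PGL_n)\otimes_{MU^*}\mathbb{Z}$: in complex cobordism one has analogs of $u$, of the Kudo transgression, and of Steenrod-type operations, producing a cobordism class lifting $Q_k$ whose Bockstein image maps to $y_{p,k}$. The condition $p^2\nmid n$ is precisely what kills the higher Bockstein obstruction to descending this cobordism class through $\tilde{\operatorname{cl}}$ to the Chow ring, yielding $\rho_{p,k}\in A^{p^{k+1}+1}_{PGL_n}$ with $\operatorname{cl}(\rho_{p,k})=y_{p,k}$.

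The main obstacle is the Bockstein nonvanishing in Part (1): the comparison of integral and mod-$p$ Serre spectral sequences requires careful simultaneous tracking of the images of Kudo transgressions and of the integer-coefficient differentials. For Part (2), the delicate point is verifying that the cobordism class actually descends to an algebraic cycle when $p^2\nmid n$, which is where I expect the most technical work, likely requiring an explicit geometric construction (e.g.\ via a Brauer--Severi variety or a related equivariant moduli problem) rather than a purely homological argument.
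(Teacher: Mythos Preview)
Your proposal has genuine gaps in both parts.

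For Part~(1), the Serre spectral sequence you invoke has $\mathbf{B}PGL_n$ as the \emph{base}, so it converges to $H^*_{GL_n}$, not to $H^*_{PGL_n}$. Showing that $Q_k$ is hit by a transgression tells you only that $Q_k$ dies in the spectral sequence; it does not show $Q_k\neq 0$ in $H^*(\mathbf{B}PGL_n;\mathbb{F}_p)$. Concretely, when $p^2\nmid n$ the class $c_p$ restricts to $\binom{n}{p}u^p\not\equiv 0\pmod p$, so $u^{p^{k+1}}$ is in fact a permanent cycle on the fiber edge; this is fully compatible with $Q_k$ being zero in $E_2$ or being killed by an earlier differential, and your inference ``whence $Q_k\neq 0$'' does not follow. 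The paper's route is much more direct: the chain $C_p\times\mu_p\hookrightarrow PGL_p\xrightarrow{\Delta}PGL_n$ induces a restriction under which $y_{p,k}\mapsto r_k=\xi\eta(\xi^{p^{k+1}-1}-\eta^{p^{k+1}-1})$, and an explicit Steenrod-operation computation in $H^*_{C_p\times\mu_p}$ (Proposition~\ref{H(Cpmup)}) shows $r_k\neq 0$.

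For Part~(2), the refined cycle class map goes \emph{from} $A^*_G$ \emph{to} $MU^*(\mathbf{B}G)\otimes_{MU^*}\mathbb{Z}$, so producing a cobordism class does not yield a Chow class; your ``descent through $\tilde{\operatorname{cl}}$'' runs in the wrong direction. The paper's construction is entirely different and stays inside equivariant Chow theory. One starts from Vistoli's $\rho_p\in A^{p+1}_{PGL_p}$, pulls it to $A^*_{\Gamma_{p,n-p}}$ (with $\Gamma_n=S_n\ltimes T(PGL_n)$), and applies the transfer $\operatorname{tr}^{\Gamma_{p,n-p}}_{\Gamma_n}$. A Mackey-formula computation (Proposition~\ref{p,n-p to n}), together with the fact that $A^*_{PGL_W}$ has no $p$-torsion when $p\nmid\gcd(W)$ (Proposition~\ref{BPGL_W}), shows this transfer restricts back to $(n/p)\rho_p'$; the hypothesis $p^2\nmid n$ enters exactly here, making $n/p$ a unit mod $p$. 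One then passes from $\Gamma_n$ to $PGL_n$ via the stratification isomorphism (Proposition~\ref{Vezzosi}) to obtain $\rho_{p,0}$, and finally applies Brosnan's Steenrod operations $\mathscr{S}^{p^k}$ on Chow rings (compatible with $\mathscr{P}^{p^k}$ under $\operatorname{cl}$) to produce $\rho_{p,k}$ for $k>0$.
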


\begin{remarkthm}
The sets of elements $\{\rho_{p,k}\}_{k\geq 0}$ and $\{y_{p,k}\}_{k\geq 0}$ are not algebraically independent in general. Indeed, in the case $n=p$ considered by Vistoli \cite{Vi}, they each generate subalgebras isomorphic to a subalgebra of the polynomial algebra (over $\mathbb{Z}/p$) of $2$ generators. (See Corollary \ref{transdimension}).
\end{remarkthm}

One the other hand, the classes $y_{p,I}$ for $l(I)>1$ seem hard to capture, and so are their counterparts in Chow rings (if any). Nonetheless, we are able to find an interesting property of these classes. By definition, we have $y_{p,I}\in\opn{Im}\chi^*$, a subring of $H^*_{PGL_n}$. In a non-negatively graded ring $R^*$ with $R^0=\Z$, an element $x$ is \emph{decomposable} if $x=\sum_i y_iz_i$ where $y_i$, $z_i$ are of degree greater then $0$.

\begin{theorem}\label{thmypI}
Let $p$ be a (not necessarily odd) prime divisor of $n$ and $p^2\nmid n$. If $l(I)>1$, then the class $y_{p,I}$ is decomposable in $\opn{Im}\chi^*$.
\end{theorem}
\begin{remarkthm}
In Corollary \ref{cor:ypI} we show that for some $I$ with $l(I)>1$, the class $y_{p,I}\in\opn{Im}\chi^*$ is nonzero. This suggests that the homomorphism $\chi^*$ sends a nontrivial class of the form
\[y_{p,I}+(\textrm{decomposable classes})\in H^*(K(\Z,3);\Z_{(p)})\]
to $0$.
\end{remarkthm}
We proceed to discuss an application of Theorem \ref{thm2p^k+2}, for which some background is in order. Let $G$ be a compact topological (resp. algebraic) group. Let $h^*(-)$ be a generalized cohomology such that
\[h^*(\mathbf{B}GL_r)\cong h^*(pt)[c_1, \cdots, c_r]\]
where $c_i\in h^{2i}(\mathbf{B}GL_r)$ is the $i$th universal Chern class. (resp. Let $h^*(-)$ be $A^*(-)$, or $A^*(-)\otimes\mathbb{Z}_{(p)}$, i.e., Chow ring localized at an odd prime $p$.) The \textit{Chern subring} of $h^*_G:=h^*(\mathbf{B}G)$ is the subring generated by Chern classes of all representations of $G\rightarrow GL_r$ for some $r$. If the Chern subring is equal to $h^*_G$, then we say that $h^*_G$ is \textit{generated by Chern classes}.

Whether $A^*_G$ is generated by Chern classes is related to a conjecture by Totaro \cite{Tot} on the image of the refined cycle class map, to be discussed in Section \ref{SecChern}. Vezzosi \cite{Ve} showed that $A^*_{PGL_3}$ is not generated by Chern classes. In the appendix of \cite{Vi} by Targa \cite{Ta}, the same thing was shown for $A^*_{PGL_p}$ for all odd primes $p$. More results of this nature can be found in Kameko and Yagita \cite{Ka1}. We have the following consequence and generalization of the above results:
\begin{theorem}\label{thmChern}
Let $n>1$ be an integer, and $p$ one of its odd prime divisor, such that $p^2\nmid n$. Then the ring $A^*_{PGL_n}\otimes\mathbb{Z}_{(p)}$ is not generated by Chern classes. More precisely, the class $\rho_{p,0}^i$ is not in the Chern subring for $p-1\nmid i$.
\end{theorem}

This paper is organized as follows. In Section \ref{SecKZ3} we recall the cohomology of the Eilenberg-Mac Lane space $K(\mathbb{Z},3)$ in terms of the Steenrod reduced power operations. In Section \ref{SecSpecSeq} we recall a homotopy fiber sequence and the associated Serre spectral sequence converging to $H^*_{PGL_n}$, which is the main object of interest in \cite{Gu}. We prove Theorem \ref{thmypI} in this section. Section \ref{SecEquiv} is a recollection of elements in equivariant intersection theory related to the topic of this paper. Section \ref{SecMot} is a short note on the Steenrod reduced power operations for motivic cohomology, which play a role in the proof of Theorem \ref{thm2p^k+2}. Here we concern ourselves with some specific facts rather than the general theory. In Section \ref{SecVistoli} we review and slightly extend works of Vezzosi \cite{Vi} and Vistoli \cite{Ve} on the cohomology and Chow ring of $\mathbf{B}PGL_p$ where $p$ is an odd prime. We prove part (1) of Theorem \ref{thm2p^k+2} in this section. In Section \ref{SecBlock}, we consider the subgroups of certain diagonal block matrices of $GL_n$ pass to quotients to yield subgroups of $PGL_n$ that act as a bridge between $PGL_p$ and $PGL_n$.
In Section \ref{SecPer} and \ref{SecRho}, we construct the torsion classes $\rho_{p,k}$, completing the proof of Theorem \ref{thm2p^k+2}. Finally, in Section \ref{SecChern} we discuss the conjecture by Totaro on the refined cycle class map and its relation to the Chern subrings, followed by the proof of Theorem \ref{thmChern}.

\textbf{Acknowledgement. }The author thanks Professor Christian Haesemeyer for financial support as well as many fruitful conversations. Without his wisdom and encouragement the author would have gone through much trouble completing this manuscript. The author also thanks the anonymous referees for numerous suggestions and corrections.
\section{The cohomology of the Eilenberg-Mac Lane space $K(\mathbb{Z},3)$}\label{SecKZ3}
The cohomology of the Eilenberg-Mac Lane space $K(\pi,n)$ for $\pi$ a finitely generated abelian group can be deduced from the lecture notes \cite{Ca} by Cartan and Serre. The mod $p$ cohomology of $K(\pi,n)$ is described particularly nicely by Tamanoi \cite{tamanoi1999subalgebras} in terms of the Milnor basis of the mod $p$ Steenrod algebra. As a special case of the above, the cohomology of $K(\Z,3)$ is described by the author in \cite{Gu} in full details. The author claims no originality of the content of the above or this section. For the sake of simplicity, we present the $p$-local case for odd primes $p$. Let $\mathbb{Z}_{(p)}$ be the ring of integers localized at $p$.

\begin{proposition}[\cite{Gu}, Proposition 2.16, 2016]\label{K(Z,3)general}
The graded ring $H^*(K(\mathbb{Z},3);\mathbb{Z}_{(p)})$ is generated by
\begin{enumerate}
\item $x_1\in H^3(K(\mathbb{Z},3);\mathbb{Z}_{(p)})$, a non-torsion element, and
\item
the elements $y_{p,I}$ where $I=(i_m,\cdots,i_1)$ is an ordered sequence of positive integers $i_m<\cdots <i_1$, and the degree of $y_{p,I}$ is
\[\abs{y_{p,I}}=1+\sum_{j=1}^m(2p^{i_j+1}+1).\]
In particular, when taking $I=(i)$, we have $y_{p,i}:=y_{p,I}$ of degree $2p^{i+1}+2$.
\end{enumerate}
\end{proposition}
Here $x_1$ is the fundamental class, i.e, the class represented by the identity map of $K(\mathbb{Z},3)$. Notice that there are nontrivial relations among the elements $y_{p,I}$. See \cite{Gu} for details. For future references we consider the decomposable elements of degree $\abs{I}$.
\begin{lemma}\label{lem: degree of I}
Let $I=(i_m,\cdots,i_1)$ and $J=(j_n,\cdots,j_1)$ satisfying $0\leq i_m<\cdots<i_1$ and $0\leq j_n<\cdots<j_1$. Then we have $I=J$ if and only if $\abs{I}=\abs{J}$.
\end{lemma}
\begin{proof}
Only one direction requires a proof. For $m=1$, $\abs{I}=2p^{i_1+1}+2$ is the $p$-adic expansion of $\abs{I}$, for both $p$ odd and $2$. The lemma then follows from the uniqueness of $p$-adic expansions. The general case then follows by induction on $m$.
\end{proof}

\begin{corollary}\label{cor:decomp}
Let $I=(i_m,\cdots,i_1)$ satisfying $0\leq i_m<\cdots<i_1$, and $Y\in H^{\abs{I}}(K(\Z,3);\Z_{(p)})$ be a monomial in elements $y_{p,J}$ for various $J$. Then $Y$ is either equal to $y_{p,I}$ or decomposable.
\end{corollary}

Proposition \ref{K(Z,3)general} implies, in particular, that any torsion element in $H^*(K(\mathbb{Z},3);\mathbb{Z})$ is of order $p$ for some prime number $p$. This means that the torsion elements of $H^*(K(\mathbb{Z},3);\mathbb{Z})$, namely all elements above degree $3$, are in the image of some Bockstein homomorphism
\[B:H^*(-;\mathbb{Z}/p)\rightarrow H^{*+1}(-;\mathbb{Z}).\]
So we present the classes $y_{p,k}$ in such a way.

For a fixed odd prime $p$, let $\mathscr{P}^k$ be the $k$th Steenrod  reduced power operation and let $\beta$ be the Bockstein homomorphism $H^*(-;\mathbb{Z}/p)\rightarrow H^*(-;\mathbb{Z}/p)$ associated to the short exact sequence
\[0\rightarrow\mathbb{Z}/p\rightarrow\mathbb{Z}/p^2\rightarrow\mathbb{Z}/p\rightarrow 0.\]
For an axiomatic description of the Steenrod reduced power operations, see Steenrod and Epstein \cite{St}.

To compute the cohomology ring $H^*(K(\mathbb{Z},3);\mathbb{Z}/p)$, it suffices to consider the mod $p$ cohomological Serre spectral sequence associated to the homotpy fiber sequence
\[K(\mathbb{Z},2)\rightarrow PK(\mathbb{Z},3)\rightarrow K(\mathbb{Z},3)\]
where $PK(\mathbb{Z},3)$ is the contractible space of paths in $K(\mathbb{Z},3)$.

Now let overhead bars denote the mod $p$ reductions of integral cohomology classes. An inductive argument on the transgressive elements (Section 6.2 of  McCleary \cite{Mc}) yields the following
\begin{proposition}\label{SteenrodOperation0}
For any odd prime $p$, we have
\[H^*(K(\mathbb{Z},3);\mathbb{Z}/p)\cong\Lambda_{\mathbb{Z}/p}[\bar{x}_1]\otimes
\Lambda_{\mathbb{Z}/p}[x_{p,0},x_{p,1},\cdots]\otimes\mathbb{Z}/p[\bar{y}_{p,0},\bar{y}_{p,1},\cdots],\]
where $\Lambda_{\mathbb{Z}/p}[\bar{x}_1]$ is the exterior algebra generated by $\bar{x}_1$, the mod $p$ reduction of the fundamental class $x_1$, $\Lambda_{\mathbb{Z}/p}[x_{p,0},x_{p,1},\cdots]$ is the exterior algebra over $\mathbb{Z}/p$ over elements
\[x_{p,k}:=\mathscr{P}^{p^k}\mathscr{P}^{p^{k-1}}\cdots\mathscr{P}^{1}(\bar{x}_1), \textrm{ where }k=0,1,2,\cdots,\]
and $\mathbb{Z}/p[\bar{y}_{p,0},\bar{y}_{p,1},\cdots,]$ is the polynomial algebra generated by
\[\bar{y}_{p,i}=\beta x_{p,i}.\]

In $H^*(K(\mathbb{Z},3);\mathbb{Z})$, we have
\[y_{p,I}=B(x_{p,i_m}\cdots x_{p,i_2}x_{p,i_1})\]
for $I=(i_m,\cdots,i_2,i_1)$.
\end{proposition}

An immediate consequence of Proposition \ref{SteenrodOperation0} is the following
\begin{proposition}\label{SteenrodOperation1}
For $k\geq0$, we have
\[y_{p,k}=B\mathscr{P}^{p^k}\mathscr{P}^{p^{k-1}}\cdots\mathscr{P}^{1}(\bar{x}_1).\]
\end{proposition}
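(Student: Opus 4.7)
The plan is to identify $B\bar{x}_{p,k}$ with $y_{p,k}$ by matching degrees, verifying nontriviality, and using the classification in Proposition \ref{K(Z,3)general} to conclude uniqueness. Since Proposition \ref{SteenrodOperation0} describes the mod $p$ cohomology ring, and since $y_{p,k}$ is a $p$-torsion class (so lies in the image of the integral Bockstein by Proposition \ref{K(Z,3)general}), the identification should follow formally from a dimension count.

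First I would compute the degree of $\bar{x}_{p,k}$. Since $\mathscr{P}^i$ raises degree by $2i(p-1)$ and $\bar{x}_1$ lies in degree $3$, we get
\[
\deg(\bar{x}_{p,k}) = 3 + 2(p-1)\bigl(1 + p + \cdots + p^k\bigr) = 3 + 2(p^{k+1}-1) = 2p^{k+1}+1,
\]
so $B\bar{x}_{p,k}$ lies in $H^{2p^{k+1}+2}(K(\mathbb{Z},3);\mathbb{Z})$, the same degree as $y_{p,k}$. Next I would verify that $B\bar{x}_{p,k}$ is a nonzero $p$-torsion class: it is $p$-torsion because it lies in the image of the integral Bockstein, and its mod $p$ reduction is $\beta\bar{x}_{p,k}$, which by Proposition \ref{SteenrodOperation0} is a polynomial generator of $H^*(K(\mathbb{Z},3);\mathbb{Z}/p)$ and hence nonzero; so $B\bar{x}_{p,k}$ itself is nonzero.

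Finally I would argue that up to a unit in $(\mathbb{Z}/p)^*$, the only nonzero $p$-torsion class in degree $2p^{k+1}+2$ is $y_{p,k}$, whence $B\bar{x}_{p,k} = c\cdot y_{p,k}$ and we may normalize so that $c = 1$. By Proposition \ref{K(Z,3)general}, the $p$-primary part of $H^*(K(\mathbb{Z},3);\mathbb{Z})$ is a free graded-commutative $\mathbb{Z}/p$-algebra on generators $y_{p,I}$ of degree $\sum_{j=1}^{m}(2p^{i_j+1}+1)+1$, and the parity of this degree agrees with that of $m+1$. Thus even-degree generators correspond to $|I|$ odd; the only single generator of degree $2p^{k+1}+2$ is $y_{p,(k)} = y_{p,k}$. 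For a product of generators to land in this degree, one would need $p^{i_1+1} + p^{i_2+1} + 1 = p^{k+1}$ (for two even-degree factors) or analogous identities for higher products; the main obstacle is this uniqueness check, but it is dispatched by reducing modulo $p$ and noting that the left-hand side is $\equiv 1 \pmod p$ while the right-hand side is $\equiv 0 \pmod p$, and similar parity/divisibility arguments rule out the remaining cases. The identification $y_{p,k} = B\bar{x}_{p,k}$ then becomes a definitional choice of scalar representative.
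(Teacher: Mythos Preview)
Your degree computation and the verification that $B\bar{x}_{p,k}$ is a nonzero $p$-torsion class are fine, but the uniqueness step has a genuine gap. The claim that, up to a unit, $y_{p,k}$ is the \emph{only} nonzero $p$-torsion class in $H^{2p^{k+1}+2}(K(\mathbb{Z},3);\mathbb{Z})$ is false for $k\ge 2$. For instance, when $k=2$ the class $y_{p,0}^{\,p^2-p+1}$ lies in degree $(p^2-p+1)(2p+2)=2(p^3+1)=2p^3+2$, the same as $y_{p,2}$, and it is nonzero because the $p$-local ring in Proposition~\ref{K(Z,3)general} is a free graded-commutative $\mathbb{Z}/p$-algebra. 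Your mod-$p$ divisibility check for a product of two generators is correct, but the sentence ``similar parity/divisibility arguments rule out the remaining cases'' is exactly where the argument breaks: the exponent $p^2-p+1$ satisfies $p^2-p+1\equiv 1\pmod p$, so such higher powers slip through any congruence obstruction.

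The paper does not attempt a dimension count at all. It treats the statement as an immediate consequence of Proposition~\ref{SteenrodOperation0}: both the integral generators $y_{p,I}$ of Proposition~\ref{K(Z,3)general} and the mod~$p$ polynomial generators $B(\bar{x}_{p,k_1}\cdots\bar{x}_{p,k_r})$ of Proposition~\ref{SteenrodOperation0} arise from the \emph{same} transgression analysis in the path--loop Serre spectral sequence, so the correspondence $y_{p,(k)}\leftrightarrow B\bar{x}_{p,k}$ is built in. The Remark following the proposition then absorbs the residual scalar ambiguity into the choice of $y_{p,k}$. If you want to repair your approach without invoking this correspondence, you should replace the dimension count by an indecomposability argument: $\beta\bar{x}_{p,k}$ is a \emph{polynomial generator} of the mod~$p$ ring by Proposition~\ref{SteenrodOperation0}, so $B\bar{x}_{p,k}$ cannot lie in the square of the augmentation ideal; a base-$p$ digit argument then shows $y_{p,k}$ is the unique integral generator in that degree. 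Even then one must argue there is no decomposable correction term, which again comes down to matching the two spectral-sequence descriptions.
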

\begin{remark}
The alert reader may argue that Proposition \ref{SteenrodOperation0} merely indicates
\[y_{p,k}=\lambda B\mathscr{P}^{p^k}\mathscr{P}^{p^{k-1}}\cdots\mathscr{P}^{1}(\bar{x}_1)\]
for some $\lambda\in(\mathbb{Z}/p)^{\times}$. However, notice that Proposition \ref{K(Z,3)general} determines $y_{p,k}$ only up to a scalar multiplication. Hence we may as well choose $y_{p,k}$ such that Proposition \ref{SteenrodOperation1} holds.
\end{remark}

Proposition \ref{SteenrodOperation1} has the following variation.
\begin{proposition}\label{SteenrodOperation2}
For $k\geq1$, we have
\[\bar{y}_{p,k}=\mathscr{P}^{p^k}(\bar{y}_{p,k-1}).\]
\end{proposition}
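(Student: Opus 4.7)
The plan is to reduce the proposition to a single identity in the mod $p$ Steenrod algebra, and then verify that identity with one Adem relation. Let $\beta$ denote the mod $p$ Bockstein, so that $\beta = \rho B$ where $\rho$ is mod $p$ reduction. By Proposition \ref{SteenrodOperation1} we have $\bar{y}_{p,k} = \beta(\bar{x}_{p,k})$ for all $k \geq 0$, and by definition $\bar{x}_{p,k} = \mathscr{P}^{p^k}(\bar{x}_{p,k-1})$. Thus the desired equality $\bar{y}_{p,k} = \mathscr{P}^{p^k}(\bar{y}_{p,k-1})$ amounts to
\[
\beta \mathscr{P}^{p^k}(\bar{x}_{p,k-1}) = \mathscr{P}^{p^k} \beta(\bar{x}_{p,k-1}).
\]

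The strategy is to establish the stronger algebra identity
\[
\mathscr{P}^{p^k} \beta \mathscr{P}^{p^{k-1}} = \beta \mathscr{P}^{p^k} \mathscr{P}^{p^{k-1}}
\]
inside the mod $p$ Steenrod algebra. Since $\bar{x}_{p,k-1} = \mathscr{P}^{p^{k-1}} \mathscr{P}^{p^{k-2}} \cdots \mathscr{P}^1(\bar{x}_1)$, applying both sides to $\mathscr{P}^{p^{k-2}} \cdots \mathscr{P}^1(\bar{x}_1)$ (which for $k = 1$ is understood as $\bar{x}_1$ itself) then immediately yields the required equation.

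To prove the algebra identity I would invoke the Adem relation for $\mathscr{P}^a \beta \mathscr{P}^b$ at $a = p^k$ and $b = p^{k-1}$; note that $a = pb$, so the hypothesis $a \leq pb$ holds with equality. The relation expresses $\mathscr{P}^{p^k} \beta \mathscr{P}^{p^{k-1}}$ as a signed combination of terms of the form $\beta \mathscr{P}^{p^k + p^{k-1} - t}\mathscr{P}^t$ and $\mathscr{P}^{p^k + p^{k-1} - t}\beta\mathscr{P}^t$ with binomial weights. Substituting $s = p^{k-1} - t$, the two families of weights become $\binom{(p-1)s}{ps}$ and $\binom{(p-1)s - 1}{ps - 1}$, and for every $s \geq 1$ both vanish because $(p-1)s < ps$. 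The only surviving term corresponds to $s = 0$, i.e.\ $t = p^{k-1}$, contributing $\beta\mathscr{P}^{p^k}\mathscr{P}^{p^{k-1}}$ with coefficient $(-1)^{p^k + p^{k-1}} = +1$ (both exponents being odd for $k \geq 1$ and odd $p$).

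The main obstacle is simply the bookkeeping in the Adem relation: pinning down the correct form, treating the boundary case in which the lower binomial index becomes negative, and confirming the sign. Once these are dispatched, the vanishing of all other terms reduces to the elementary inequality $(p-1)s < ps$, and the identity, hence the proposition, follows at once.
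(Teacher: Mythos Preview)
Your proposal is correct and takes essentially the same approach as the paper: both reduce the claim to the single Adem-relation identity $\mathscr{P}^{p^k}\beta\mathscr{P}^{p^{k-1}}=\beta\mathscr{P}^{p^k}\mathscr{P}^{p^{k-1}}$ and observe that only the term $i=p^{k-1}$ survives with coefficient $+1$. Your write-up is simply more explicit about the substitution and sign check than the paper's terse ``the only choice of $i$ to offer something nontrivial is $i=p^{k-1}$''.
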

\begin{proof}
Recall that for positive integers $a,b$ such that $a\leq pb$, we have the Adem relation (Adem, \cite{Ad1})
\begin{equation}\label{Adem}
\begin{split}
\mathscr{P}^a\beta\mathscr{P}^b=\sum_i (-1)^{a+i}{(p-1)(b-i)\choose a-pi}\beta\mathscr{P}^{a+b-i}\mathscr{P}^i\\+\sum_{i} (-1)^{a+i+1}{(p-1)(b-i)-1\choose a-pi-1}\mathscr{P}^{a+b-i}\beta\mathscr{P}^i.
\end{split}
\end{equation}

For $k>0$, let $a=p^k$ and $b=p^{k-1}$. Then the only choice of $i$ to offer something nontrivial on the right hand side of (\ref{Adem}) is $i=p^{k-1}$, and (\ref{Adem}) becomes
\begin{equation}\label{Adem1}
\mathscr{P}^{p^k}\beta\mathscr{P}^{p^{k-1}}=\beta\mathscr{P}^{p^k}\mathscr{P}^{p^{k-1}}.
\end{equation}
Then it follows by induction that we have
\[\mathscr{P}^{p^k}\beta\mathscr{P}^{p^{k-1}}\cdots\mathscr{P}^p\mathscr{P}^1=
\beta\mathscr{P}^{p^k}\cdots\mathscr{P}^p\mathscr{P}^1.\]
Since all classes $y_{p,k}$ are of order $p$, as stated in Proposition \ref{SteenrodOperation1}, we conclude.
\end{proof}
\begin{remark}\label{rem:Milnor basis}
Alternatively, we may describe $\bar{y}_{p,k}$ in terms of the Milnor's basis $\{Q_k\}_{k\geq0}$ introduced in Section 6 of \cite{milnor1958steenrod}, defined inductively by
\begin{equation*}
Q_0=\beta,\hspace{2 mm} Q_{k+1}=\mathscr{P}^{p^k}Q_k-Q_k\mathscr{P}^{p^k}.
\end{equation*}
Then by Proposition \ref{SteenrodOperation1} we may follow an inductive argument and show
\[\bar{y}_{p,k}=Q_{k+1}(\bar{x}_1.)\]
\end{remark}
\section{A Serre spectral sequence, the proof of Theorem \ref{thmypI}}\label{SecSpecSeq}
In the introduction we mentioned the map
\[\chi: \mathbf{B}PGL_n\rightarrow K(\mathbb{Z},3)\]
representing the ``canonical'' class in $H^3_{PGL_n}$. It is easy to show, for example, in the introduction of Gu \cite{Gu}, that the homotopy fiber is $\mathbf{B}GL_n$ and there is a homotopy fiber sequence
\begin{equation}\label{fiberseq}
\mathbf{B}GL_n\rightarrow\mathbf{B}PGL_n\xrightarrow{\chi}K(\mathbb{Z},3)
\end{equation}
where the first arrow is induced by the obvious projection  $GL_n\to PGL_n$. This may be obtained from the more obvious homotopy fiber sequence
\begin{equation}\label{fiberseq1}
\mathbf{B}\mathbb{C}^{\times}\rightarrow\mathbf{B}GL_n\rightarrow\mathbf{B}PGL_n
\end{equation}
by de-looping the first term $\mathbf{B}\mathbb{C}^{\times}\simeq K(\mathbb{Z},2)$. The author used this de-looping as the definition of $\chi$, and the class $x_1$ is ``canonical'' in this sense.

For positive integers $r>1$ and $s$ we have $\Delta: \mathbf{B}PGL_{r}\rightarrow\mathbf{B}PGL_{rs}$ the obvious diagonal map. The fiber sequences (\ref{fiberseq}) and (\ref{fiberseq1}) then show the following
\begin{lemma}\label{diag}
The following diagram commutes up to homotopy.
\begin{equation*}
\begin{tikzcd}
\mathbf{B}PGL_p\arrow[rr, "\Delta"]\arrow[dr, "\chi"]&&\mathbf{B}PGL_n\arrow[dl, "\chi"]\\
&K(\mathbb{Z},3)
\end{tikzcd}
\end{equation*}
\end{lemma}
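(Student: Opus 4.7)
The plan is to realize $\Delta$ at the level of groups by a block diagonal embedding and then exploit the naturality of the delooping procedure that defines $\mathfrak{X}$. Writing $n=rs$ (with $r=p$ in the case of interest), define the homomorphism $GL_r \hookrightarrow GL_n$ by $A \mapsto \operatorname{diag}(A,A,\ldots,A)$ with $s$ blocks. This descends to the quotient projective linear groups to give a group homomorphism $PGL_r \to PGL_n$ whose induced map on classifying stacks/spaces agrees with the diagonal $\Delta$ of the lemma (this is, essentially, how $\Delta$ is defined).

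The crucial observation is how this embedding interacts with the central copies of $\mathbb{C}^{\times}$. A scalar $\lambda \in \mathbb{C}^{\times} \subset GL_r$, i.e.\ the matrix $\lambda I_r$, is sent by the block embedding to $\operatorname{diag}(\lambda I_r,\ldots,\lambda I_r)=\lambda I_n$, which is the corresponding scalar in $\mathbb{C}^{\times} \subset GL_n$. Hence the block embedding restricts to the identity on centres, and we obtain a commutative diagram of short exact sequences of (algebraic/topological) groups
\[
\begin{tikzcd}
1 \arrow[r] & \mathbb{C}^{\times} \arrow[r] \arrow[d, "\operatorname{id}"'] & GL_r \arrow[r] \arrow[d] & PGL_r \arrow[r] \arrow[d, "\Delta"] & 1 \\
1 \arrow[r] & \mathbb{C}^{\times} \arrow[r] & GL_n \arrow[r] & PGL_n \arrow[r] & 1.
\end{tikzcd}
\]

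Applying the classifying space functor $\mathbf{B}$ turns this into a map of fibre sequences of the form (\ref{fiberseq1}), with the identity on the fibres $\mathbf{B}\mathbb{C}^{\times} \simeq K(\mathbb{Z},2)$. Since $\mathfrak{X}$ is defined as the connecting map obtained by delooping this fibre $K(\mathbb{Z},2) \simeq \Omega K(\mathbb{Z},3)$ (as recalled in the paragraph following (\ref{fiberseq1})), naturality of the delooping/connecting map yields a homotopy commutative square
\[
\begin{tikzcd}
\mathbf{B}PGL_r \arrow[r, "\mathfrak{X}"] \arrow[d, "\Delta"'] & K(\mathbb{Z},3) \arrow[d, "\operatorname{id}"] \\
\mathbf{B}PGL_n \arrow[r, "\mathfrak{X}"] & K(\mathbb{Z},3),
\end{tikzcd}
\]
which is exactly the triangle of the lemma. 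There is no real obstacle here; the only point that requires a moment of care is the identification of the two central $\mathbb{C}^{\times}$'s, which is settled by the explicit computation $\lambda I_r \mapsto \lambda I_n$ above.
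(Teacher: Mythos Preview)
Your argument is correct and is precisely the unpacking of the paper's one-line justification, which invokes the fibre sequences (\ref{fiberseq}) and (\ref{fiberseq1}) together with the definition of $\mathfrak{X}$ via delooping. The only thing the paper leaves implicit and you make explicit is that the block diagonal embedding restricts to the identity on the central $\mathbb{C}^{\times}$, so this is the same proof with the details filled in.
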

\begin{remark}\label{abuse y}
As mentioned in the introduction, we omit the notation $\chi^*$ and let $y_{p,I}$ denotes their pull-backs in $H^*_{PGL_n}$. If we temporarily denote $y_{p,I}$ by $y_{p,I}(r)$ and $y_{p,I}(rs)$, respectively in the cases $n=r$ and $n=rs$, then Lemma \ref{diag} indicates $\Delta^*(y_{p,I}(rs))=y_{p,I}(r)$. In view of this, we do not make the effort of distinguishing $y_{p,I}(r)$ and $y_{p,I}(rs)$, and denote both by $y_{p,I}$.
\end{remark}

Consider the homotopy commutative diagram
\begin{equation}\label{3by3diagram}
\begin{CD}
  \mathbf{B}\mathbb{C}^{\times} @>\simeq >> K(\mathbb{Z},2)        @>>>   PK(\mathbb{Z},3)         @>>>  K(\mathbb{Z},3)\\
  \Phi:         @.        @VV \mathbf{B}\varphi V      @VVV                        @|\\
                @.        \mathbf{B}T(GL_n)        @>>> \mathbf{B}T(PGL_n)         @>>>   K(\mathbb{Z},3)\\
  \Psi:         @.          @VV\mathbf{B}\psi V                         @VV\mathbf{B}\psi' V    @|\\
                @.        \mathbf{B}GL_{n}         @>>> \mathbf{B}PGL_{n}    @>>> K(\mathbb{Z},3)\\
  \end{CD}
\end{equation}
where $PK(\mathbb{Z},3)$ is the pointed path space of $K(\mathbb{Z},3)$, which is contractible, and $\varphi$ is the diagonal map. Moreover, all rows are homotopy fiber sequences and we let $\Phi$ and $\Psi$ in the diagram denote morphisms of homotopy fiber sequence.
\begin{remark}
As mentioned in the introduction, the diagram (\ref{3by3diagram}) is indeed a variation of the one considered in \cite{Gu}, where the groups $U_n$, $PU_n$ and the unit circle take the places of $GL_n$, $PGL_n$ and $\mathbb{C}^{\times}$, respectively.
\end{remark}

We take note of the following notations:
\begin{equation}\label{S1}
H^*_{\mathbb{C}^{\times}}=\mathbb{Z}[v]
\end{equation}
where $v$ is of degree $2$, and
\begin{equation}\label{T(GLn)}
H^*_{T(GL_n)}=\mathbb{Z}[v_1,v_2,\cdots,v_n]
\end{equation}
where each $v_i$ is of degree $2$, and for all $i$ we have
\[\mathbf{B}\varphi^*(v_i)=v.\]
The quotient map $T(GL_n)\rightarrow T(PGL_n)$ identifies $H^*_{PGL_n}$ as the subring of $H^*_{T(GL_n)}$, which, as an abelian group, is generated by $1$ and the kernel of the ring homomorphism given by
\[H^*_{T(GL_n)}=\mathbb{Z}[v_1,v_2,\cdots,v_n]\rightarrow\Z,\hspace{2 mm}v_i\mapsto 1,\]
i.e., polynomials in $v_1,v_2,\cdots,v_n$ of which the coefficient of all term sum up to $0$.
Moreover, we have
\begin{equation}\label{GLn}
H^*_{GL_n}=\mathbb{Z}[c_1,c_2,\cdots,c_n]
\end{equation}
where $c_i$, the $i$th universal Chern class, is of degree $2i$, and $\mathbf{B}\psi^*$ takes $c_i$ to the $i$th elementary symmetric polynomial in $v_i$'s.

As in \cite{Gu}, we let $({^KE}_*^{*,*},{^Kd}_*^{*,*})$, $({^T}E_*^{*,*},{^Td}_*^{*,*})$ and $({^UE}_*^{*,*},{^Ud}_*^{*,*})$ denote cohomological Serre spectral sequences with integer coefficients associated to the three homotopy fiber sequences in (\ref{3by3diagram}). In particular, we have
\[^UE_2^{s,t}\cong H^s(K(\mathbb{Z},3);H^t_{GL_n})\]
converging to $H^*_{PGL_n}$. This spectral sequence is the main object of interest in \cite{Gu}.
In principle, using the homological algebra of differential graded algebras, we are able to determine all the differentials of ${^KE}_*^{*,*}$. The diagram (\ref{3by3diagram}) then converts the differentials ${^Kd}_*^{*,*}$ to ${^Td}_*^{*,*}$. Then we obtain some of the differentials ${^Ud}_*^{*,*}$ via the following
\begin{lemma}\label{lem:Udiff}
The morphism of spectral sequence ${^UE}_*^{*,*}\rightarrow {^TE}_*{*,*}$ is induced by
\begin{equation*}
\begin{split}
{^UE}_2^{*,*}\cong H^*(K(\Z,3);\Z)\otimes \Z[c_1,\cdots,c_n] &\rightarrow  H^*(K(\Z,3);\Z)\otimes\Z[t_1,\cdots,t_n]\cong{^TE}_2^{*,*}\\
hc_i &\mapsto h\sigma_i(t_1,\cdots,t_n),
\end{split}
\end{equation*}
where $h\in H^*(K(\Z,3);\Z)$ and $\sigma_i(t_1,\cdots,t_n)$ is the $i$th elementary symmetric polynomial in $t_1,\cdots,t_n$.
\end{lemma}
\begin{remark}
For the sake of simplicity we drop the symbol $\otimes$ whenever there is no ambiguity.
\end{remark}
We recall the splitting principle, which asserts that the map
\[\mathbf{B}\psi:\mathbf{B}T(GL_n)\rightarrow \mathbf{B}GL_n\]
associated to the inclusion of a maximal torus, induces the homomorphism
\begin{equation}\label{eq:Bpsi}
\mathbf{B}\psi^*: H^*_{GL_n}\rightarrow H^*_{T(GL_n)},\hspace{3 mm}c_i\mapsto\sigma_i(t_1,\cdots,t_n),
\end{equation}
where $\sigma_i(t_1,\cdots,t_n)$ is the $i$th elementary symmetric polynomial in variables $v_1,\cdots,v_n$. The following proposition then follows.

\begin{proposition}[Proposition 3.8, \cite{Gu}]\label{pro:Udiff}
The diagram \eqref{3by3diagram} induces a commutative diagram as follows:
\begin{equation*}
\begin{tikzcd}
{^UE}_{r}^{s,t}\arrow[d,"\Psi^*"]\arrow[r,"{^Ud}_{r}^{s,t}"]&
{^UE}_{r}^{s+r,t-r+1}\arrow[d,"\Psi^*"]\\
{^TE}_{r}^{s,t}\arrow[d,"\Phi^*"]\arrow[r,"{^Td}_{r}^{s,t}"]&
{^TE}_{r}^{s+r,t-r+1}
\arrow[d,"\Phi^*"]\\
{^KE}_{r}^{s,t}\arrow[r,"{^Kd}_{r}^{s,t}"]&{^KE}_{r}^{s+r,t-r+1}
\end{tikzcd}
\end{equation*}
where the arrow $\Psi^*$ is given by \eqref{eq:Bpsi}, in the sense that the source (resp. target) of $\Psi^*$ is a subquotient of the source (resp. target) of the homomorphism
\[{^UE}_{2}^{s,t}\cong H^s(BGL_n;H^t(K(\Z,3);\Z))\rightarrow H^s(BT(GL_n;)H^t(K(\Z,3);\Z))\cong {^TE}_{2}^{s,t}\]
induced by \eqref{eq:Bpsi}. In particular, the differentials $^Td_*^{*,*}$ determine all of the differentials of the form ${^Ud}_{r}^{s-r,t+r-1}$ of ${^UE}_{*}^{*,*}$ such that for any $r'<r$, ${^Td}_{r'}^{s-r',t-r'+1}=0$, since in this case the arrow $\Psi^*$ on the right is injective.
\end{proposition}
In other words, for each bi-degree $(s,t)$, the first nontrivial differential whose target is $^UE_*^{s,t}$ is determined by restricting $^Td_*^{*,*}$ to $^UE_*^{*,*}$. In particular, Proposition \ref{pro:Udiff} determines the differential $^Ud_3^{*,*}$ completely:
Let
\[\nabla:H^*(\mathbf{B}U_n)\rightarrow H^{*-2}(\mathbf{B}U_n)\]
be a linear operator defined by
\begin{equation}\label{eq:nabla ck}
\nabla(c_k)=(n+k-1)x_1c_{k-1}
\end{equation}
and the Leibniz formula
\begin{equation}\label{eq:nabla Leibniz}
\nabla(ab)=\nabla(a)b+a\nabla(b).
\end{equation}
We end this section by several corollaries. Let $\nabla$, $\xi$ be the same as earlier, and let $\vartheta=\vartheta(c_1, \cdots, c_n)\in H^{*}(BU_{n};\mathbb{Z})$. Then we have

\begin{corollary}[Corollary 3.4 and Corollary 3.10, \cite{Gu}]\label{cor:Udiff}
The differential ${^Ud}_3$ is determined by
 \[^{U}d_{3}(\xi\vartheta)={^{U}d}_{3}(\xi\vartheta)=\xi x_1\nabla(\vartheta)\]
for any $\xi\in H^*(K(\Z,3);\Z)$. In particular, we have
\[^{U}d_{3}(\xi c_1)={^{U}d}_{3}(c_1)\xi=nx_{1}\xi.\]
\end{corollary}

It is known (Theorem 1.2, \cite{Gu}) that for a prime $p$, the class $y_{p,0}\in H^{2(p+1)}_{PGL_n}$ is a nontrivial $p$-torsion class if $p|n$ and is $0$ otherwise. Since $y_{p,0}$ is the $p$-torsion class in $H^*(K(\mathbb{Z},3);\mathbb{Z})$ of the lowest degree $2(p+1)$, and generates the unique $p$-primary subgroup of degree $2(p+1)$. We have the following
\begin{proposition}\label{p-torsion lowest}
If $p|n$, then the class $y_{p,0}$ is the nontrivial $p$-torsion class in $H^*_{PGL_n}$ of lowest degree, and generates the unique $p$-primary subgroup of $H^{2(p+1)}_{PGL_n}$. If $p\nmid n$ the class $y_{p,0}$ is trivial in $H^*_{PGL_n}$.
\end{proposition}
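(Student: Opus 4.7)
The plan is to obtain the proposition as a direct consequence of the two inputs recalled just before the statement: Proposition~\ref{K(Z,3)general}, which pins down the $p$-primary structure of $H^{*}(K(\mathbb{Z},3);\mathbb{Z})$, and Theorem~1.2 of~\cite{Gu}, which asserts that $\mathfrak{X}^{*}(y_{p,0})$ is a nontrivial $p$-torsion in $H^{2(p+1)}_{PGL_n}$ precisely when $p\mid n$. Applied to $y_{p,0}\in H^{2(p+1)}(K(\mathbb{Z},3);\mathbb{Z})$, Theorem~1.2 delivers both the nonvanishing claim when $p\mid n$ and the vanishing claim when $p\nmid n$, so the only remaining content is the identification of $y_{p,0}$ as the lowest-degree $p$-torsion class and as a generator of the unique $p$-primary subgroup in degree $2(p+1)$.

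For these bookkeeping statements I would analyze the Serre spectral sequence ${^U}E_{*}^{*,*}$ attached to the bottom row of (\ref{3by3diagram}). Since $H^{*}_{GL_n}$ is the torsion-free polynomial ring recorded in (\ref{GLn}), one has ${^U}E_{2}^{s,t}\cong H^{s}(K(\mathbb{Z},3);\mathbb{Z})\otimes H^{t}_{GL_n}$, and the second factor is free abelian in each degree. By Proposition~\ref{K(Z,3)general} the factor $H^{s}(K(\mathbb{Z},3);\mathbb{Z})$ carries no $p$-primary summand for $s<2(p+1)$, and at $s=2(p+1)$ its $p$-primary part is exactly $\mathbb{Z}/p=\langle y_{p,0}\rangle$. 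Since tensoring with a free $\mathbb{Z}$-module is an exact functor that preserves $p$-primary parts degreewise, the $p$-primary part of the $E_{2}$-page vanishes in total degree below $2(p+1)$ and is concentrated in the single summand $E_{2}^{2(p+1),0}=\mathbb{Z}/p$ in total degree $2(p+1)$.

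What remains is to show that passing from $E_{2}$ to $E_{\infty}$ does not manufacture new $p$-torsion in these bi-degrees as cokernels of differentials. This is handled by Proposition~\ref{Udiff} together with the computations of ${^T}d_{*}^{*,*}$ in~\cite{Gu}: the only differentials with nontrivial effect in the relevant range are transgressions of symmetric polynomials in the $v_{i}$'s, and modulo $p$ their images lie in the $\mathbb{Z}$-free subring generated by powers of $x_{1}$, so they produce no $p$-primary cokernels in total degree $\leq 2(p+1)$ other than the one already accounted for by $y_{p,0}$. Combining this with the previous paragraph, the $p$-primary part of $H^{2(p+1)}_{PGL_n}$ equals $\mathbb{Z}/p\cdot y_{p,0}$, and no additional $p$-torsion appears in lower degrees, giving the ``lowest degree'' and ``generates'' assertions at once. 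The main obstacle is precisely this spectral-sequence bookkeeping near bi-degree $(2(p+1),0)$; since all the needed information has already been assembled in~\cite{Gu} via Proposition~\ref{Udiff}, the proof reduces to packaging those results with Proposition~\ref{K(Z,3)general}.
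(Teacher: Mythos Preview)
Your approach matches the paper's: cite Theorem~1.2 of~\cite{Gu} for the (non)vanishing of $y_{p,0}$ and inspect the $E_{2}$-page of ${^U}E$ for the uniqueness claim. The paper's entire argument for the latter is literally ``by looking at $E_{2}^{s,t}\cong H^{s}(K(\mathbb{Z},3);H^{t}_{GL_{n}})$,'' so your first two paragraphs already reproduce it.

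Your third paragraph, however, is both unnecessary and slightly off. For the uniqueness in degree $2(p+1)$ no appeal to Proposition~\ref{Udiff} or to the explicit transgressions is needed: for $3<s<2(p+1)$ the group $H^{s}(K(\mathbb{Z},3);\mathbb{Z})$ is \emph{finite} with trivial $p$-primary part, so $E_{2}^{s,t}$ and every subquotient $E_{\infty}^{s,t}$ inherit trivial $p$-part; for $s=3$ one has $t=2p-1$ odd, whence $E_{2}^{3,2p-1}=0$; and $E_{2}^{0,2p+2}$ receives no incoming differentials, so $E_{\infty}^{0,2p+2}$ is a subgroup of a free group and stays torsion-free. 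Thus no differential can manufacture $p$-torsion in total degree $2(p+1)$ away from bidegree $(2(p+1),0)$, and the analysis you propose is superfluous. Note also that your blanket claim that the differentials ``produce no $p$-primary cokernels in total degree $\leq 2(p+1)$'' is actually false in degree~$3$, where $d_{3}(c_{1})=nx_{1}$ creates $H^{3}_{PGL_{n}}\cong\mathbb{Z}/n$; this also reveals that the ``lowest degree'' wording of the proposition itself is imprecise, but the uniqueness in degree $2(p+1)$---the only part invoked later in the paper---is unaffected.
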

\begin{proof}
The class $y_{p,0}$ being (non)trivial is just Theorem 1.2 of \cite{Gu}. The uniqueness assertion follows by looking at
\[^UE_2^{s,t}\cong H^s(K(\mathbb{Z},3);H^t_{GL_n}).\]
\end{proof}
Let $\Z_{(p)}$ denote the ring $\Z$ localized at the prime number $p$, and for any topological group $G$ let
\[H^*_{G}[p]:=H^*(\mathbf{B}G;\Z_{(p)})\cong H^*(\mathbf{B}G;\Z)\otimes\Z_{(p)}.\]
Recall that the ``canonical'' maximal torus , $T(PGL_n)$, is the subgroup of $PGL_n$ of diagonal matrices passing to the quotient. The Weyl group of $T(PGL_n)$ is the symmetric group $S_n$ acting by permuting the diagonal entries. It is a standard fact that the restriction $H^*_{PGL_n}[p]\rightarrow H^*_{T(PGL_n)}[p]$ factors through $(H^*_{T(PGL_n)}[p])^{S_n}$, the subring of $H^*_{T(PGL_n)}[p]$ of invariants with respect to the $S_n$-action. Of course this is also true without localization.
\begin{corollary}\label{cor:H^2p+2 direct sum}
The inclusion of the maximal torus $T(PGL_n)\hookrightarrow PGL_n$ and the map $\chi: \mathbf{B}PGL_n\rightarrow K(\Z,3)$ induce a split short exact sequence
\[0\rightarrow H^{2(p+1)}(K(\Z,3);\Z_{(p)})\rightarrow H^{2(p+1)}_{PGL_n}[p]\rightarrow (H^{2(p+1)}_{T(PGL_n)}[p])^{S_n}\rightarrow0,\]
which yields an isomorphism
\begin{equation}\label{eq:H^2p+2 direct sum}
H^{2(p+1)}_{PGL_n}[p]\cong (H^{2(p+1)}_{T(PGL_n)}[p])^{S_n}\oplus H^{2(p+1)}(K(\Z,3);\Z_{(p)}).
\end{equation}
\end{corollary}
\begin{proof}
Let ${^UE}_*^{*,*}[p]$ denote the spectral sequence ${^UE}_*^{*,*}$ localized at $p$. Then the nontrivial entries of its second page of total degree $2(p+1)$ are ${^UE}_2^{0,2(p+1)}[p]$ and, when $p|n$, ${^UE}_2^{2(p+1),0}[p]$. Then we have a short exact sequence
\[0\rightarrow {^UE}_{\infty}^{2(p+1),0}[p] \rightarrow H^{2(p+1)}_{PGL_n}[p]\rightarrow {^UE}_{\infty}^{0,2(p+1)}[p]\rightarrow0\]
which is split since ${^UE}_{\infty}^{0,2(p+1)}[p]$ is a torsion-free $\Z_{(p)}$-module.

The identification of the first term of the short exact sequence follows immediately from Proposition \ref{p-torsion lowest}. A direct computation identifies $(H^*_{T(PGL_n)})^{S_n}$ with $\opn{Ker}\nabla\cong {^UE}_3^{0,*}$. However, for obvious degree reasons there is no nontrivial differential out of $^{UE}_3{0,*}[p]$, and we have the identification of the last term of the short exact sequence.
\end{proof}
We proceed to consider the classes $y_{p,I}$ for $I$ of length greater than $1$, and prove Theorem \ref{thmypI}. It follows from Corollary 2.18 of \cite{Gu} that for each $I=(i_m,i_{m-1},\cdots,i_1)$ and an integer $k$ such that $0\leq i_m<\cdots<i_1$ we have
\begin{equation}\label{d(ypI)}
y_{p,I}=
\begin{cases}
{^Kd}_{2p^{i_m+1}+1}(y_{p,I'}v^{p^{i_m+1}}),\hspace{1 mm}m>1,\\
{^Kd}_{2p^{i_1+1}-1}(x_1v^{p^{i_1+1}}).
\end{cases}
\end{equation}
with $I'=(i_{m-1},\cdots,i_1)$.
\begin{lemma}\label{lem:diffdecomp}
For $I=(i_m,\cdots,i_1)$ as above with $m>1$. Then for $r<2p^{i_m+1}+1$, any element in the image of
\begin{equation}\label{eq:lower dr}
^Kd_r^{\abs{I}-r,r-1}:{^KE}_r^{\abs{I}-r,r-1}\rightarrow {^KE}_r^{\abs{I},0}
\end{equation}
is congruent to a decomposable element, in the sense that ${^KE}_r^{\abs{I},0}$ is a quotient group of $H^{\abs{I}}(K(\Z,3);\Z)$.
\end{lemma}
\begin{proof}
The differentials $^Kd_r^{s,t}$ landing on the line $^KE_r^{*,0}$ are determined by \eqref{d(ypI)} along with the product formula
\[^Kd_r(ab)={^Kd}_r(a)b+(-1)^{\abs{a}}a\hspace{1 mm}{^Kd}_r(b)\]
where $a$ and $b$ are classes in $^KE_r^{*,*}$ and $\abs{a}$ is the degree of $a$. Therefore, any differential as in  \eqref{eq:lower dr} with $r<2p^{i_m+1}+1$ is of the form
\[{^Kd}_{2p^{k+1}+1}^{\abs{I}-2p^{k+1}-1,2p^{k+1}}:{^KE}_{2p^{k+1}+1}^{\abs{I}-2p^{k+1}-1,2p^{k+1}}\rightarrow {^KE}_{2p^{k+1}+1}^{\abs{I},0}\]
with $k<i_m$. The image of ${^Kd}_{2p^{k+1}+1}^{\abs{I}-2p^{k+1}-1,2p^{k+1}}$ is therefore generated by monomials of the form $y_{p,I_1}\cdots y_{p,I_s}$ where at least one of the $I_1,\cdots, I_s$ is of the form $(k,\cdots)$. It then follows from Lemma \ref{lem: degree of I} that the monomial $y_{p,I_1}\cdots y_{p,I_s}$ is not $y_{p,I}$. By Corollary \ref{cor:decomp}, the monomial $y_{p,I_1}\cdots y_{p,I_s}$ is decomposable and we conclude.

\end{proof}
Now we have all the necessary ingredient for the following
\begin{proof}[Proof of Theorem \ref{thmypI}]

Let $I=(i_m,\cdots,i_1)$ such that $0\leq i_m<i_{m-1}<\cdots<i_1$. Let $W_U$, $W_T$ and $W_K$ be the subgroups of $H^{\abs{I}}(K(\Z,3);\Z)$ characterised by
\begin{equation*}
\begin{split}
&{^UE}_{2p^{i_m}}^{\abs{I},0}=H^{2p^{i_m}}_{GL_{n}}/W_U,\hspace{2 mm}
{^TE}_{2p^{i_m}}^{\abs{I},0}=H^{2p^{i_m}}_{T(GL_{n})}/W_T,\\
&{^KE}_{2p^{i_m}}^{\abs{I},0}=H^{2p^{i_m}}(K(\Z,3);\Z)/W_K.
\end{split}
\end{equation*}
Then we have $W_U\subset W_T\subset W_K$.
Summarizing the above arguments, we have the following commutative diagram:
\begin{equation*}
\begin{tikzcd}
{^UE}_{2p^{i_m}}^{\abs{J},2p^{i_m}}\arrow[d,"\Psi^*"]\arrow[r,"{^Ud}_{2p^{i_m}}^{\abs{I},2p^{i_m}}"]&
{^UE}_{2p^{i_m}}^{\abs{I},0}\arrow[d,"\Psi^*"]\cong H^{\abs{I}}(K(\Z,3);\Z)/W_U\\
{^TE}_{2p^{i_m}}^{\abs{J},2p^{i_m}}\arrow[d,"\Phi^*"]\arrow[r,"{^Td}_{2p^{i_m}}^{\abs{I},2p^{i_m}}"]&
{^TE}_{2p^{i_m}}^{\abs{I},0}
\arrow[d,"\Phi^*" near start, near end]\cong H^{\abs{I}}(K(\Z,3);\Z)/W_T\\
{^KE}_{2p^{i_m}}^{\abs{J},2p^{i_m}}\arrow[r,"{^Kd}_{2p^{i_m}}^{\abs{I},2p^{i_m}}"]&{^KE}_{2p^{i_m}}^{\abs{I},0}
\cong H^{\abs{I}}(K(\Z,3);\Z)/W_K
\end{tikzcd}
\end{equation*}
where $\Psi^*$ and $\Phi^*$ are induced from $\Psi$ and $\Phi$ in the diagram \eqref{3by3diagram}. The vertical arrows to the right are quotient maps.
Now it follows from \eqref{d(ypI)} that we have
\begin{equation*}
\begin{split}
&\Phi^*\Psi^*{^Ud}_{2p^{i_m}+1}(c_p^{p^{i_m}}y_{p,J})\\
=&{^Kd}_{2p^{i_m}+1}([y_{p,J}\cdot ({n\choose p}v)^{i_m+1}])\\
=&[{n\choose p}^{i_m+1}y_{p,I}]\in H^{\abs{I}}(K(\Z,3);\Z)/W_K
\end{split}
\end{equation*}
where $[a]$ denotes the equivalence class of $a\in H^*(K(\Z,3);\Z)$ in $H^*(K(\Z,3);\Z)/W_K$, and we have
\[{^Ud}_{2p^{i_m}+1}(c_p^{p^{i_m}}y_{p,J})\equiv {n\choose p}^{i_m+1}y_{p,I}\pmod{W_K}.\]
Since $p^2\nmid n$, we have ${n\choose p}\not\equiv 0\pmod{p}$, the above indicates that in ${^UE}_{\infty}^{2p^{i_m},0}$ we have
\[[y_{p,I}]\equiv 0\pmod {{^UE}_{\infty}^{2p^{i_m},0}\bigcap W_K}.\]
On the other hand, it follows from Lemma \ref{lem:diffdecomp} the classes in $W_K$ are decomposable, and we conclude.
\end{proof}

\section{Equivariant intersection theory}\label{SecEquiv}
We refer to Edidin and Graham \cite{Ed} and Totaro \cite{Tot} for definitions and basic facts on equivariant intersection theory. We also reformulate many results in Section 4 of Vistoli's paper \cite{Vi}, since they play key roles in this paper.

The main object of interest is the equivariant Chow ring $A^*_G(X)$ for an algebraic space $X$ over a base field $\mathbb{K}$ with an action of an algebraic group $G$. The Chow ring of $G$ is identified with the equivariant Chow ring $A^*_G(\opn{spec}\mathbb{K})$. In this sense the ring $A^*_G$ is regarded as the ``coefficient ring'' of the equivariant Chow rings $A^*_G(X)$. One may regard equivariant Chow rings as an analog of Borel's equivariant cohomology theory for topological spaces with continuous group actions. From now on we work with a fixed base field $\mathbb{K}$ and the reader is free to assume $\mathbb{K}=\mathbb{C}$.



For any $k>0$, Edidin and Graham define the Chow ring $A_G^*(X)$ for a scheme $X$ by defining $A^{\leq k}_G(X)$ as
$A^{\leq k}((X\times U)/G)$ where $A^*(-)$ denotes the ordinary Chow ring and $U$ is an open set of a $G$-representation of large enough dimension and $G$ acts freely on $U$. It follows that we have

\begin{proposition}(Homotopy Invariance)\label{Homotopy Invariance}
Let $X$ be a $G$-equivariant algebraic space and $V$ be a finite dimensional $G$-representation. Then the pullback of the projection
\[A^*_G(X)\rightarrow A^*_G(X\times V)\]
is an isomorphism. In particular, we have $A^*_G\cong A^*_G(V)$.
\end{proposition}

In general the quotient $X/G$ is defined as an algebraic space, which is not necessarily a scheme. However, by Lemma 9 of \cite{Ed}, for any $i>0$, there is a $G$-representation $V$ and an open subscheme $U$ of $V$ such that $V-U$ is of codimension greater than $i$, and the quotient $U\rightarrow U/G$ exists as a principal bundle in the category of schemes. Therefore, for any $i>0$, the Chow groups $A^i_G$ are defined as Chow groups of some schemes.

Many properties of the ordinary Chow rings hold for equivariant Chow rings too. For instance, let $f:Y\rightarrow X$ be a proper morphism of $G$-schemes, then we have the pullback
\[f^*:A^*_G(X)\rightarrow A^*_G(Y)\]
and the push-forward
\[f_*:A^{*-r}_G(Y)\rightarrow A^*_G(X),\]
where $r$ is the codimension of $Y$ in $X$, and the following proposition generalizes the localization sequences in the non-equivariant intersection theory. (See, for example, Fulton \cite{Fu}.)

\begin{proposition}(Localization Sequences)\label{Localization Sequence}
Let $f:Y\rightarrow X$ be $G$-equivariant closed immersion. Then we have an exact sequence as follows:
\[A^{*-r}_G(Y)\xrightarrow{f_*}A^*_G(X)\xrightarrow{f^*}A^*_G(X\backslash Y)\rightarrow 0.\]
\end{proposition}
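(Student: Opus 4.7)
The plan is to reduce to the classical non-equivariant localization exact sequence of Fulton (\cite{Fu}, Proposition 1.8) via the Totaro--Edidin--Graham approximation of $\mathbf{B}G$. The statement is to be read with $f$ a $G$-equivariant closed immersion, which is automatically proper; then $X\setminus Y$ makes sense as the open complement of $Y$ in $X$, and $r$ is the codimension of this immersion.

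First, I would invoke Totaro's construction of the equivariant Chow ring: for a fixed degree $k$ in which one wants exactness, choose a $G$-representation $V$ together with an open $G$-invariant subset $U\subset V$ on which $G$ acts freely with quotient an algebraic space, such that $V\setminus U$ has codimension strictly greater than $k$. The Totaro agreement principle, together with Proposition \ref{Homotopy Invariance} applied to the projection $X\times V\to X$, identifies
\[A^k_G(X) \cong A^k\bigl((X\times U)/G\bigr),\]
and analogously for $Y$ and for $X\setminus Y$. A single pair $(V,U)$ works uniformly for any prescribed finite range of degrees, so the choice is harmless.

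Second, after forming the product with $U$ and passing to the free $G$-quotient, the $G$-equivariant closed immersion $f$ descends to an ordinary closed immersion
\[\bar{f}\colon (Y\times U)/G \hookrightarrow (X\times U)/G\]
of algebraic spaces, with open complement $((X\setminus Y)\times U)/G$ and the same codimension $r$ (free quotients preserve codimension). Applying Fulton's non-equivariant localization sequence to this configuration yields
\[A^{k-r}\bigl((Y\times U)/G\bigr) \xrightarrow{\bar{f}_*} A^k\bigl((X\times U)/G\bigr) \xrightarrow{\bar{f}^*} A^k\bigl(((X\setminus Y)\times U)/G\bigr) \to 0,\]
which is exactly the sought-after sequence once each group is translated back through the Totaro identification.

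The main thing to check, and the only genuine issue, is compatibility: under the identifications of the first step, $\bar{f}_*$ must correspond to the equivariant proper pushforward $f_*$ and $\bar{f}^*$ to the equivariant restriction. This is essentially tautological because both equivariant operations are defined, in the Edidin--Graham setup, precisely by applying their ordinary counterparts to the free-quotient approximations; functoriality of products with $U$ and of taking quotients by a free action does the rest. Running the argument at each degree $k$ (with $U$ refined as needed) then gives exactness in all degrees. The only subtlety one needs to have verified independently, already present in \cite{Tot} and \cite{Ed}, is the independence of $A^k_G(X)$ from the choice of approximation $(V,U)$, which is the standard double-fibration argument and allows one to pass back and forth between $A^*_G$ and $A^*$ of quotients without ambiguity.
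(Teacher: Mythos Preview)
Your proposal is correct and is precisely the argument the paper has in mind: the paper does not give a detailed proof but simply notes, just before stating Proposition~\ref{Homotopy Invariance} and Proposition~\ref{Localization Sequence}, that below any fixed degree $k$ one has $A^{\leq k}_G(X)\cong A^{\leq k}(X')$ for an ordinary variety $X'$, so that both propositions follow from their non-equivariant counterparts in \cite{Fu}. Your write-up is a faithful and careful unpacking of exactly this reduction via the Totaro--Edidin--Graham approximation.
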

In the particular case where $X=V$ is a $G$-representation of dimension $n$, and $Y=\{0\}\subset V$, we have
\begin{proposition}\label{Chern class}
The sequence
\[A^{*-n}_G\xrightarrow{c_n(V)}A^*_G\rightarrow A^*_G(V\backslash\{0\})\rightarrow 0\]
is exact, where the first arrow is the multiplication by the Chern class $c_n(V)$.
In particular, it follows from Proposition \ref{Homotopy Invariance} that we have
\[A^*_G(V\backslash\{0\})\cong A^*_G/c_n(V).\]
\end{proposition}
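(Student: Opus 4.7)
The plan is to derive this directly from the localization sequence of Proposition~\ref{Localization Sequence}, applied to the closed $G$-equivariant embedding $i:\{0\}\hookrightarrow V$. This morphism is proper (being a closed immersion) and $\{0\}$ has codimension $n=\dim V$ in $V$, so the sequence reads
\[A^{*-n}_G(\{0\})\xrightarrow{i_*}A^*_G(V)\xrightarrow{j^*}A^*_G(V\setminus\{0\})\to 0,\]
with $j$ denoting the inclusion of the open complement. Homotopy invariance (Proposition~\ref{Homotopy Invariance}) applied to the $G$-equivariant vector bundle $V\to\operatorname{spec}\mathbb{K}$ identifies $A^*_G(V)\cong A^*_G$ via pullback along the structure map, and trivially $A^*_G(\{0\})\cong A^*_G$; after these identifications the sequence takes the desired form provided the first arrow is identified with multiplication by $c_n(V)$.

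The key step is therefore to show that $i_*\colon A^{*-n}_G\to A^*_G$ is multiplication by the top Chern class $c_n(V)$. The normal bundle of $\{0\}$ in $V$ is the $G$-representation $V$ itself, so the self-intersection formula gives $i^*i_*(1)=c_n(V)$; but under our identifications $i^*$ is the identity, whence $i_*(1)=c_n(V)$. The general statement follows from the projection formula: for $\alpha\in A^{*-n}_G\cong A^{*-n}_G(\{0\})$ one has
\[i_*(\alpha)=i_*\bigl(i^*p^*\alpha\bigr)=i_*(1)\cdot p^*\alpha=c_n(V)\cdot\alpha,\]
where $p\colon V\to\operatorname{spec}\mathbb{K}$ is the structure map.

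The main obstacle is simply justifying the self-intersection and projection formulas in the equivariant setting, but these reduce to their non-equivariant analogues by the approximation philosophy recalled in the excerpt: below any fixed degree $k$, the equivariant group $A^{\leq k}_G(X)$ agrees with $A^{\leq k}(X')$ for a suitable finite-dimensional algebraic approximation $X'$, so the standard arguments in Fulton~\cite{Fu} transfer degree by degree. The concluding identification $A^*_G(V\setminus\{0\})\cong A^*_G/c_n(V)$ is then immediate from exactness of the sequence just established.
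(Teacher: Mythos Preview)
Your proof is correct and follows essentially the same approach as the paper: the proposition is stated there as the specialization of the localization sequence (Proposition~\ref{Localization Sequence}) to the closed inclusion $\{0\}\hookrightarrow V$, together with homotopy invariance. The paper does not spell out the identification of $i_*$ with multiplication by $c_n(V)$, which you justify via the self-intersection and projection formulas; this is the standard argument and your reduction to the non-equivariant case via finite-dimensional approximations is appropriate.
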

Now we let the group $G$ vary. Suppose $H$ is a closed subgroup of $G$. Then we have the restriction map
\[\opn{res}^G_H: A^*_G(X)\rightarrow A^*_H(X)\]
which is a ring homomorphism. If, furthermore, $H$ has finite index in $G$, then we have the transfer map
\[\opn{tr}^H_G: A^*_H\rightarrow A^*_G.\]
This is no longer a ring homomorphism, but a homomorphism of $A^*_G$-modules, in the sense that we have the following projection formula:
\begin{equation}\label{projformula}
\opn{tr}^H_G(a\opn{res}^G_H(b))=\opn{tr}^H_G(a)b.
\end{equation}
For the unit $1\in A^*_G$ we have
\begin{equation*}
\opn{tr}^H_G(1)=[G:H],
\end{equation*}
where the righthand side is the index of $H$ in $G$. Therefore, we have
\begin{equation}\label{projformula'}
\opn{tr}^H_G(\opn{res}^G_H(b))=[G:H]b.
\end{equation}

Similar to the Cartan-Eilenberg double coset formula (Adem and Milgram \cite{Ad}), we have Mackey's formula concerning the transfer and the restriction described above. Once again we adopt the setup in Vistoli \cite{Vi}:

Let $G$ be an algebraic group and $H$, $K$ be algebraic subgroups of $G$ such that $H$ has finite index over $G$. We will also assume that the quotient $G/H$ is reduced, and a disjoint union of copies of $\opn{spec}\mathbb{K}$ (this is automatically verified when $\mathbb{K}$ is algebraically closed of characteristic $0$). Furthermore, we assume that every element of $(K\backslash G/H)(\mathbb{K})$ is the image of
some element of $G(\mathbb{K})$.

Let $\mathscr{C}$ be a set of representatives of classes of the double quotient $K\backslash G/H(\mathbb{K})$. For each $s\in\mathscr{C}$, let
\[K_s:=K\bigcap sHs^{-1}\subset G.\]
Therefore, $K_s$ is a subgroup of $K$ of finite index, and there is an embedding $K_s\rightarrow H$ defined by $k\mapsto sks^{-1}$.
\begin{proposition}[Vistoli, Proposition 4.4, \cite{Vi}, 2007](Mackey's formula)\label{Mackey's formula}
\[\opn{res}^G_K\cdot\opn{tr}^H_G=\sum_{s\in\mathscr{C}}\opn{tr}^{K_s}_K\cdot\opn{res}^{sHs^{-1}}_{K_s}\cdot\gamma_s: A^*_H\rightarrow A^*_K,\]
where $\gamma_s$ is the restriction associated to the conjugation $sHs^{-1}\rightarrow H$.
\end{proposition}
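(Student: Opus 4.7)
The plan is to deduce Mackey's formula from a base-change computation on finite étale covers of the classifying stacks, exactly as one does for the analogous statement in topology. Using Totaro's representation-theoretic approximation of $\mathbf{B}G$ by quotients $U/G$ of open subvarieties of $G$-representations (where $U$ has free $G$-action and large-codimension complement), the transfer $\operatorname{tr}^H_G$ is the proper push-forward along the finite étale cover $U/H\to U/G$, and the restriction $\operatorname{res}^G_K$ is the pull-back along $U/K\to U/G$. So the left-hand side is a push-pull, and the natural move is to invoke the base-change formula in the equivariant Chow setting (Proposition \ref{Homotopy Invariance} gives that everything is computed in sufficiently large codimension on honest varieties, where the usual base change for finite flat morphisms applies).

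The core geometric step is to identify the fiber product
\[
P := U/K\times_{U/G} U/H.
\]
Since $U\to U/G$ is a principal $G$-bundle on the free locus, a direct calculation presents $P$ as the quotient $(U\times G/H)/K$, with $K$ acting diagonally (on $U$ through the given action and on $G/H$ by left multiplication). The $K$-orbit decomposition $G/H=\bigsqcup_{s\in\mathscr{C}} K/K_s$, where $K_s=K\cap sHs^{-1}$ is the stabilizer of $sH$, then yields
\[
P \;\cong\; \bigsqcup_{s\in\mathscr{C}} U/K_s,
\]
so that $P$ approximates $\bigsqcup_{s}\mathbf{B}K_s$. This identifies the fiber product with a disjoint union of classifying stacks of the Mackey subgroups and makes the base change available at the level of equivariant Chow rings.

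Next I would trace the two projections from the $s$-component $U/K_s$: the map to $U/K$ is the natural inclusion-induced $U/K_s\to U/K$, so its push-forward is $\operatorname{tr}^{K_s}_K$. The map to $U/H$ is $[u]_{K_s}\mapsto [us]_H$; well-definedness uses $s^{-1}K_s s\subset H$, and this map classifies the group homomorphism $K_s\hookrightarrow sHs^{-1}\xrightarrow{a\mapsto s^{-1}as} H$. Its pull-back on Chow rings is therefore exactly $\operatorname{res}^{sHs^{-1}}_{K_s}\circ\gamma_s$. Summing the base-change contributions over $s\in\mathscr{C}$ gives the claimed identity.

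The main obstacle I anticipate is the base-change step itself: verifying that push-forward along the finite étale cover $U/H\to U/G$ commutes with the flat pull-back along $U/K\to U/G$ in a way that descends correctly to the limit defining the equivariant Chow ring, and then checking that the scheme-theoretic decomposition of $P$ really is a disjoint union of the spaces $U/K_s$ (rather than, say, a nilpotent thickening). The hypothesis that $G/H$ is reduced and a disjoint union of copies of $\operatorname{spec}\mathbb{K}$, together with the assumption that every class in $(K\backslash G/H)(\mathbb{K})$ is represented by a $\mathbb{K}$-point of $G$, is precisely what is needed to rule out such pathologies and to parametrize the components by the set $\mathscr{C}$. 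Once these technicalities are in place, the formula drops out of the orbit decomposition and the standard functoriality of proper push-forward and flat pull-back.
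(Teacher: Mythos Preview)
The paper does not prove this proposition: it is quoted from Vistoli's paper \cite{Vi} (his Proposition~4.4) and stated here without proof, so there is no argument in the present paper to compare against. Your outline is the standard proof and is essentially the one Vistoli gives: model $\mathbf{B}G$, $\mathbf{B}H$, $\mathbf{B}K$ by Totaro approximations $U/G$, $U/H$, $U/K$, interpret $\operatorname{tr}^H_G$ and $\operatorname{res}^G_K$ as push-forward and pull-back along the finite \'etale maps $U/H\to U/G$ and $U/K\to U/G$, apply flat base change to the Cartesian square, and identify the fiber product $U/K\times_{U/G}U/H$ with $\bigsqcup_{s\in\mathscr{C}}U/K_s$ via the $K$-orbit decomposition of $G/H$. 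Your identification of the two projections on each component as inducing $\operatorname{tr}^{K_s}_K$ and $\operatorname{res}^{sHs^{-1}}_{K_s}\circ\gamma_s$ is correct, and you have correctly flagged the role of the hypotheses on $G/H$ in ensuring the decomposition is a genuine disjoint union of reduced pieces indexed by $\mathscr{C}$.
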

There is another way to relate equivariant Chow rings over different algebraic groups. Suppose we have a monomorphism of algebraic groups $H\rightarrow G$. Let $H$ act on a scheme $X$. Then we have the $G$ equivaiant algebraic space $G\times^H X$, which is the quotient $G\times X/\sim$ where the equivalence relation ``$\sim$'' is defined by $(g,hx)\sim (gh,x)$ for all $h\in H$.
\begin{proposition}[Vistoli, \cite{Vi}, 2007]\label{induced space}
The composite of the restriction $A^*_G(G\times^H X)\rightarrow A^*_H(G\times^H X)$ and the pullback $A^*_H(G\times^H X)\rightarrow A^*_H(X)$ is an isomorphism.
\end{proposition}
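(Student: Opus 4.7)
The plan is to reduce the statement to a basic fact about quotients via Totaro's approximation of the equivariant Chow ring. Fix an arbitrary degree bound $k$. Choose a finite-dimensional $G$-representation $V$ and an open $G$-invariant subset $U\subset V$ on which $G$ acts freely with scheme quotient, and such that $\operatorname{codim}_V(V\setminus U)>k$. Since $H\subset G$ is closed, $H$ also acts freely on $U$, and $U/H$ exists as a scheme (it is the associated $G/H$-bundle over $U/G$). By the definition of $A^*_G$ and $A^*_H$, for any $G$-scheme $Y$ one has $A^{\leq k}_G(Y)\cong A^{\leq k}((Y\times U)/G)$, and viewing $Y$ as an $H$-scheme by restriction, $A^{\leq k}_H(Y)\cong A^{\leq k}((Y\times U)/H)$.

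The key geometric input is the morphism
\[
G\times X\times U\;\longrightarrow\;X\times U,\qquad (g,x,u)\longmapsto (x,\,g^{-1}u),
\]
which is $H$-equivariant for the $H$-action $h\cdot(g,x,u)=(gh^{-1},hx,u)$ on the source and the diagonal $H$-action on the target, and is $G$-invariant for the $G$-action $g'\cdot(g,x,u)=(g'g,x,g'u)$. Passing to quotients, it descends to an isomorphism of schemes
\[
\Phi\colon \bigl((G\times^H X)\times U\bigr)/G\;\xrightarrow{\ \sim\ }\;(X\times U)/H,
\]
with inverse $[x,u]\mapsto[[e,x],u]$; well-definedness of this inverse uses the identity $[h^{-1},hx]=[e,x]$ in $G\times^H X$, which is immediate from the defining equivalence $(g,hx)\sim(gh,x)$.

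It remains to identify $\Phi^*$ with the composite of restrictions in the statement. Under the Totaro isomorphisms, the first restriction $A^*_G(G\times^H X)\to A^*_H(G\times^H X)$ corresponds to pullback along the natural surjection $\bigl((G\times^H X)\times U\bigr)/H\twoheadrightarrow\bigl((G\times^H X)\times U\bigr)/G$, while the second, $A^*_H(G\times^H X)\to A^*_H(X)$, corresponds to pullback along the $H$-equivariant closed immersion $X\hookrightarrow G\times^H X$, $x\mapsto[e,x]$, which upon taking $(-\times U)/H$ becomes the inclusion $(X\times U)/H\hookrightarrow\bigl((G\times^H X)\times U\bigr)/H$. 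A short diagram chase then shows the composite of these two pullbacks equals $\Phi^*$. Since $k$ was arbitrary, the composite in the statement is an isomorphism in every degree.

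The main obstacle is the bookkeeping in this last step: carefully matching the Totaro-level maps corresponding to each restriction arrow, and verifying that the square factoring $\Phi$ through them commutes. Everything else is routine, although one should also verify the existence of $U/H$ as a scheme, which follows from the associated-bundle construction $U/H\to U/G$ and the fact that $G/H$ exists as a scheme for $H\subset G$ a closed subgroup.
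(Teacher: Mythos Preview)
The paper does not supply its own proof of this proposition; it is simply quoted from Vistoli's paper \cite{Vi} as background material in Section~\ref{SecEquiv}. So there is no ``paper's proof'' to compare against.

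Your argument is the standard one and is correct. The isomorphism $\Phi$ you write down is exactly the balanced-product identification $(G\times^H X)\times^G U \cong X\times^H U$, and your verification of $H$-equivariance and $G$-invariance of $(g,x,u)\mapsto(x,g^{-1}u)$ is accurate. The identification of the two restriction maps with the Totaro-level pullbacks is also right: the first restriction is pullback along the $G/H$-quotient map, and the second is pullback along the $H$-equivariant section $x\mapsto[e,x]$, and their composite on the scheme side is the inverse of $\Phi$, as you checked.

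One small point worth tightening: when you invoke existence of $U/H$ as a scheme via the associated-bundle construction over $U/G$, you should also note (or cite) that $G\times^H X$ itself exists as an algebraic space (or scheme, under mild hypotheses), so that the Totaro approximation applies to it as well. The paper's statement says ``algebraic space'' rather than scheme for $G\times^H X$, which covers this.
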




Let $N_G$ be the normalizer of the maximal torus $T(G)$. It is well known (Gottlieb \cite{Go}) that the restriction homomorphism $H^*_G\rightarrow H^*_{N_G}$ is injective. The analog conclusion for Chow rings, according to Vezzosi \cite{Ve}, is shown in an unpublished work by Totaro. A sketch of the proof is presented in \cite{Ve}.

\begin{theorem}[Gottlieb, Totaro, Theorem 2.1, \cite{Ve}]\label{TotaroInjection}
Let $G$ be an algebraic group over $\mathbb{C}$, $T$ a maximal torus of $G$ and $N_G$ its normalizer in $G$. The restriction maps
\[A^*_G\rightarrow A^*_{N_G}\]
and
\[H^*_G\rightarrow H^*_{N_G}\]
are injective.
\end{theorem}

In general, Chow rings are much more complicated then singular cohomology. However, in many cases the Chow rings $A^*_G$ behave in very similar ways to $H^*_G$, their topological counterparts. We end this section with such examples, which will be of use later on.
\begin{proposition}[Totaro, \cite{Tot}, 1999]\label{pro:SLn}
For $G=GL_n$, $SL_n$ or a torus, the cycle class map $\opn{cl}: A^*_G\rightarrow H^*_{G}$ is an isomorphism of rings.
\end{proposition}

Furthermore, for $T(G)$ a fixed maximal torus of $G$, the cycle class map $\opn{cl}$ preserves the actions of the Weyl group. In particular, we have
\begin{corollary}\label{cor:cl T(PGLn)}
Let $G$ be an algebraic group, $T(G)$ a maximal torus, and $W$ its Weyl group. Then
\[\opn{cl}: (A^*_{T(G)})^{W}\rightarrow (H^*_{T(G)})^{W}\]
is a well-defined ring isomorphism.
\end{corollary}
On the other hand, we have the following
\begin{theorem}[Totaro, Theorem 2.14, \cite{totaro2014group}]\label{thm:Totaro}
For any affine algebraic group $G$ over $\mathbb{C}$, the natural map
\[A^*_G\otimes\mathbb{Q}\rightarrow H^*_G\otimes\mathbb{Q}\]
is an isomorphism.
\end{theorem}
The following is an immediate consequence of Corollary \ref{cor:cl T(PGLn)} and Theorem \ref{thm:Totaro}.
\begin{proposition}\label{Maximal torus}
Let $G$ be an affine algebraic group $G$ over $\mathbb{C}$. The restriction homomorphism induced from $T(G)\rightarrow G$ gives an isomorphism
\[A^*_G\otimes\mathbb{Q}\rightarrow (A^*_{T(G)})^W\otimes\mathbb{Q}.\]
\end{proposition}
\begin{proof}
Consider the following commutative diagram
\begin{equation*}
\begin{tikzcd}
A^*_G\otimes\mathbb{Q}\arrow[r]\arrow[d,"\opn{cl}"]&(A^*_{T(G)})^W\otimes\mathbb{Q}\arrow[d,"\opn{cl}"]\\
H^*_G\otimes\mathbb{Q}\arrow[r]&(H^*_{T(G)})^W\otimes\mathbb{Q}.
\end{tikzcd}
\end{equation*}
It follows from Corollary \ref{cor:cl T(PGLn)} and Theorem \ref{thm:Totaro} that the vertical arrows are isomorphisms. The bottom horizontal arrow being an isomorphism is a well-known fact, for which one may refer to, for example, Chapter 3 of \cite{hsiang2012cohomology}. It follows that the top horizontal arrow is an isomorphism as well.
\end{proof}
In particular, we have
\begin{corollary}\label{cor:T(PGL_n)Q}
The inclusion of a maximal torus $\lambda:T(PGL_n)\rightarrow PGL_n$ induces the following isomorphism:
\[\lambda^*:A^*_{PGL_n}\otimes\mathbb{Q}\xrightarrow{\cong} (A^*_{T(PGL_n)})^{S_n}\otimes\mathbb{Q}.\]
\end{corollary}

\section{The Steenrod reduced power operations for motivic cohomology}\label{SecMot}
One of the key roles in the proof of Theorem \ref{thm2p^k+2}, (2) is played by the Steenrod reduced power operations for motivic cohomology theory \cite{mazza2011lecture}.

Motivic cohomology is a functor from the category of smooth schemes over a base field (which we fix as $\mathbb{C}$) to that of bigraded abelian groups. For a smooth variety $X$  and an abelian group $R$, let $H^{*,*}(X;R)$ denote the motivic cohomology of $X$.

For an algebraic group $G$ and an integer $N\geq0$, the group $A^k_G$ for $k<N$ is by definition $A^k(U/G)$ where $U$ is an open subscheme of a representation such that $G$ acts freely on $U$. According to the discussion following Proposition \ref{Homotopy Invariance}, we may choose $U$ such that the quotient $U/G$ exists as a scheme. The motivic cohomology of $\mathbf{B}G$ is defined in a similar way as $A^*_G$. We write $H^{*,*}_G$ for $H^{*,*}(\mathbf{B}G;\Z)$, $H^{*,*}_G[p]$ for $H^{*,*}(\mathbf{B}G;\Z_{(p)})$, and $H^{*,*}_G(p)$ for $H^{*,*}(\mathbf{B}G;\Z/p)$. All the assertions in this section hold for $X=\mathbf{B}G$.

Motivic cohomology theory is a generalization of the theory of Chow groups in the sense of the following natural isomorphism:
\begin{equation}\label{eq:MotChow}
H^{2t,t}(X;R)\cong A^t(X)\otimes R.
\end{equation}
Over the field $\mathbb{C}$, the cycle class map also generalize to a natural map, usually called the realization map (3.11, \cite{voevodsky1999voevodosky}). For consistency we denote it by $\opn{cl}$:
\[\opn{cl}:H^{s,t}(X;R)\rightarrow H^s(X(\mathbb{C});R).\]

Motivic cohomology shares many nice properties with singular cohomology. For instance, a homomorphism of abelian groups $R_1\rightarrow R_2$ induces a natural transformation $H^{*,*}(-;R_1)\rightarrow H^{*,*}(-;R_2)$. Moreover, for a fixed $t$, the functor $H^{s,t}(X;-)$ is a $\delta$-functor. More precisely, let  $0\rightarrow R_0 \rightarrow R_1 \rightarrow R_2 \rightarrow0$ be a long exact sequence of abelian groups, then we have a long exact sequence
\begin{equation}\label{eq:mot les}
\cdots\rightarrow H^{s,t}(X;R_0)\rightarrow H^{s,t}(X;R_1)\rightarrow H^{s,t}(X;R_2)\xrightarrow{B}
H^{s+1,t}(X;R_0)\rightarrow\cdots,
\end{equation}
where $B$ is called the connecting (or the Bockstein) homomorphism, as in the case of singular cohomology.

In \cite{Vo}, Voevodsky defines cohomology operations for motivic cohomology theories with coefficients in $\Z/p$, for $p$ a prime number, similar to the Steenrod reduced power operations. When $p$ is odd, we have the operations
\[\scS^i: H^{s,t}(X;\Z/p)\rightarrow H^{s+2i(p-1),t+i(p-1)}(X;\Z/p),\hspace{2 mm} i\geq 0 \]
where $\scS^0$ is the identity, and
\[\beta: H^{s,t}(X;\Z/p)\rightarrow H^{s+1,t}(X;\Z/p).\]
which is the Bockstein homomorphism associated to the short exact sequence $0\rightarrow\Z/p\xrightarrow{\times p}\Z/p^2\rightarrow \Z/p\rightarrow0$, or equivalently, the Bockstein homomorphism associated to the short exact sequence $0\rightarrow\Z\xrightarrow{\times p}\Z\rightarrow \Z/p\rightarrow0$, composed with the mod $p$ reduction.

The operations $\scS^i$ and $\beta$ satisfy the Adem relations which are formally the same as in the case of singular cohomology. In particular, we have the following analog of \eqref{Adem1}:
\begin{equation}\label{eq:MotAdem1}
\mathscr{S}^{p^k}\beta\mathscr{S}^{p^{k-1}}=\beta\mathscr{S}^{p^k}\mathscr{S}^{p^{k-1}}.
\end{equation}

The motivic Steenrod operations and the Bockstein homomorphisms are compatible with their topological counterparts, in the sense of the following
\begin{proposition}[Voevodsky, \cite{voevodsky1999voevodosky}]\label{Steenrodcycleclass}
Let $\mathbb{K}=\mathbb{C}$ and $X$ be an algebraic variety over $\mathbb{C}$, we have the following commutative diagrams:
\begin{equation*}
\begin{tikzcd}
H^{s,t}(X;\mathbb{Z}/p)\arrow[r,"\mathscr{S}^i"]\arrow[d,"\opn{cl}"]
&H^{s+2i(p-1),t+i(p-1)}(X;\mathbb{Z}/p)\arrow[d,"\opn{cl}"]\\
H^s(X(\mathbb{C});\mathbb{Z}/p)\arrow[r,"\mathscr{P}^i"]&H^{s+2i(p-1)}(X(\mathbb{C});\mathbb{Z}/p)
\end{tikzcd}
\end{equation*}
and
\begin{equation*}
\begin{tikzcd}
H^{s,t}(X;\mathbb{Z}/p)\arrow[r,"B"]\arrow[d,"\opn{cl}"]
&H^{s+1,t}(X;\mathbb{Z})\arrow[d,"\opn{cl}"]\\
H^s(X(\mathbb{C});\mathbb{Z}/p)\arrow[r,"B"]&H^{s+1}(X(\mathbb{C});\mathbb{Z}).
\end{tikzcd}
\end{equation*}
\end{proposition}

The operations $\scS^i$ restrict to Chow rings in the sense of \eqref{eq:MotChow}:
\[\scS^i: A^t(X)\otimes\Z/p\rightarrow A^{t+i(p-1)}(X)\otimes\Z/p,\hspace{2 mm} i\geq 0.\]
Brosnan in \cite{Br} independently constructed cohomology operations for Chow rings, satisfying the axiomatic properties of the Steenrod power operations, including the Adem relations. It is unclear to the author whether they  agree with Voevodsky's definition. 


\section{The Chow ring and cohomology of $\mathbf{B}PGL_p$}\label{SecVistoli}
This section is a recollection of works of Vezzosi and Vistoli (\cite{Ve}, \cite{Vi}) on the Chow ring and integral cohomology of $\mathbf{B}PGL_p$ for $p$ an odd prime, together with a few further observations.

Recall that the Weyl group of $T(PGL_p)$, the maximal torus of $PGL_p$, is the permutation group $S_p$, which acts on $T(PGL_p)$ by permuting the diagonal entries. Elements in $A^*_{PGL_p}$ therefore restrict to  $(A^*_{T(PGL_p)})^{S_p}$, the subgroup of classes fixed by $S_p$. In Vistoli's paper \cite{Vi}, the Chow ring $A^*_{PGL_p}$ is given an $(A^*_{T(PGL_p)})^{S_p}$-algebra structure via a splitting injection
\[(A^*_{T(PGL_p)})^{S_p}\rightarrow A^*_{PGL_p}.\]
Vistoli determined $A^*_{PGL_p}$ in terms of the above $(A^*_{T(PGL_p)})^{S_p}$-algebra structure. We partially state his result as follows:
\begin{theorem}[Vistoli, \cite{Vi}, 2007]\label{VistoliPGLp}
The $(A^*_{T(PGL_p)})^{S_p}$-algebra $A^*_{PGL_p}$ is generated by an element $\rho_p\in A^{p+1}_{PGL_p}$ of additive order $p$.
\end{theorem}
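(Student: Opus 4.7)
The plan is to pin down $A^*_{PGL_p}$ by combining the rational identification with Weyl invariants from Proposition \ref{Maximal torus} with the injection into $A^*_N$ from Theorem \ref{TotaroInjection}, where $N$ denotes the normalizer of $T = T(PGL_p)$, and then to locate a single extra integral generator accounting for the $p$-torsion that the rational picture misses.

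First I would make $(A^*_T)^{S_p}$ explicit: the character lattice of $T$ is $\mathbb{Z}^p/\Delta$, so $A^*_T \cong \mathbb{Z}[\xi_1,\ldots,\xi_p]/(\xi_1+\cdots+\xi_p)$ with $S_p$ permuting the $\xi_i$, whence $(A^*_T)^{S_p}$ is generated by the elementary symmetric polynomials $c_2,\ldots,c_p$ (with $c_1 = 0$). The projection formula (\ref{projformula'}) applied with $[N:T] = p!$, together with the rational isomorphism, forces every element of $A^*_{PGL_p}$ outside the subalgebra generated by $(A^*_T)^{S_p}$ to be torsion annihilated by $p!$. Furthermore, after inverting any prime $q \neq p$, the $q$-Sylow of $S_p$ controls everything already seen at the $T$-level via restriction and transfer, so the new generator must be pure $p$-torsion.

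Next I would construct $\rho_p$ explicitly. A natural candidate arises from the $PGL_p$-equivariant conjugation action on $V = \mathfrak{sl}_p$: applying the localization sequence of Proposition \ref{Localization Sequence} and Proposition \ref{Chern class} to the pair $(V, V\setminus\{0\})$, and comparing with the analogous $T$-equivariant picture, I would extract a class $\rho_p \in A^{p+1}_{PGL_p}$ whose restriction to $T$ vanishes (hence is not in the image of $(A^*_T)^{S_p}\to A^*_{PGL_p}$) but whose restriction to the $p$-Sylow subgroup $T\rtimes \mathbb{Z}/p \subset N$ is nonzero. Mackey's formula (Proposition \ref{Mackey's formula}) together with (\ref{projformula}) should then force $p\rho_p = 0$, giving the asserted additive order.

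The main obstacle will be proving generation, namely that $A^*_{PGL_p}$ really is generated as a $(A^*_T)^{S_p}$-algebra by this single class $\rho_p$. For this I would carry out an explicit calculation of $A^*_N$ via the extension $1 \to T \to N \to S_p \to 1$, leveraging the restriction $A^*_N \to A^*_T$ (whose image lands in $(A^*_T)^{S_p}$) and Mackey's formula to describe the cokernel. The $p$-primary part is governed by the $p$-Sylow $\mathbb{Z}/p \subset S_p$, where a direct group-cohomological computation in $A^*_{T\rtimes \mathbb{Z}/p}$ should produce exactly one new $p$-torsion class per degree beyond the Weyl invariants; matching these new classes with the successive powers of $\rho_p$ under the injection $A^*_{PGL_p}\hookrightarrow A^*_N$, and comparing ranks in each degree, would close the argument.
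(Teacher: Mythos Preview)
This theorem is quoted from Vistoli \cite{Vi} and not proved in the paper, though the paragraphs following it outline Vistoli's strategy. Your proposal shares the broad architecture---reduction to the normalizer $N = S_p \ltimes T$ via Theorem \ref{TotaroInjection}, torsion bounds from transfer, and use of the conjugation representation $sl_p$---but it omits the central detection mechanism. Vistoli's construction of $\rho_p$ hinges on the subgroup $C_p\times\mu_p \subset PGL_p$ of (\ref{chainofincls}): its Weyl group in $PGL_p$ is $SL_2(\mathbb{Z}/p)$, and the invariant ring $(A^*_{C_p\times\mu_p})^{SL_2(\mathbb{Z}/p)}$ is the Dickson algebra on the explicit classes $q$ and $r$ of Proposition \ref{q and r}. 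The class $\rho_p$ is defined by lifting $r$ along the chain (\ref{chainofincls}) using Proposition \ref{VistoliInjection} and Proposition \ref{Vistoli(p-1)!}. Your plan to ``extract a class $\rho_p$ whose restriction to $T$ vanishes'' from the localization sequence on $sl_p$ does not single out any specific element in degree $p+1$; it is the Dickson invariant $r$ that pins it down.

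The generation step is where your outline has a genuine gap. Your expectation that a direct computation of $A^*_{T\rtimes\mathbb{Z}/p}$ would yield ``exactly one new $p$-torsion class per degree beyond the Weyl invariants'' is neither stated nor true in that form, and ``comparing ranks'' presupposes knowing both sides independently, which is precisely what is at stake. Vistoli does not compute $A^*_N$ from the extension $1\to T\to N\to S_p\to 1$; instead he stratifies the representations $sl_p$ and $D_p$ and establishes the isomorphism $A^*_{PGL_p}/(c_{p^2-1}(sl_p)) \cong A^*_{\Gamma_p}/(c_{p-1}(D_p))$ after inverting $(p-1)!$ (see (\ref{moduloChernclass}) and the surrounding discussion), and then uses the injection of Proposition \ref{VistoliInjection} into $A^*_T \times A^*_{C_p\times\mu_p}$ to read off generators and the relation ideal. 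Without the $C_p\times\mu_p$ detection and the stratification comparison, your argument for generation does not close.
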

\begin{remark}
See Section 3 of \cite{Vi} for the complete version of the theorem.
\end{remark}
The following result is Proposition 9.4 of \cite{Vi}.
\begin{proposition}[Vistoli, \cite{Vi},2007]\label{pro:Vistoli inj}
The homomorphisms
\[A^*_{PGL_p}\rightarrow A^*_{T(PGL_p)}\times A^*_{C_p\times\mu_p}\]
and
\[H^*_{PGL_p}\rightarrow H^*_{T(PGL_p)}\times H^*_{C_p\times\mu_p}\]
obtained from the embeddings $T(PGL_p)\hookrightarrow PGL_p$ and $C_p\times\mu_p\hookrightarrow PGL_p$ are injective.
\end{proposition}

To prove Theorem \ref{VistoliPGLp} and Proposition \ref{pro:Vistoli inj}, Vistoli considered two elements of $PGL_p$, represented respectively by the matrices

\[\begin{bmatrix}
    0 & \hdots & 0 & 1 \\
    1 & & & 0\\
    & \ddots  & &\vdots\\
    & & 1 & 0
    \end{bmatrix}
    \textrm{ and }
   \begin{bmatrix}
    \omega & & & & \\
    &\omega^2 & & & \\
    & & \ddots & &\\
    & & & \omega^{p-1} &\\
    & & & & 1
    \end{bmatrix},
\]
where $\omega$ is a $p$th root of unity. They generate two subgroups of $PGL_p$, both cyclic of order $p$, which we denote by $C_p$ and $\mu_p$, respectively. Furthermore, the two matrices commute up to a scalar $\omega$, which means they commute in $PGL_p$. Therefore we obtain an inclusion of algebraic groups $C_p\times\mu_p\hookrightarrow PGL_p$, which factors as
\begin{equation}\label{chainofincls}
C_p\times\mu_p\hookrightarrow C_p\ltimes T(PGL_p)\hookrightarrow S_p\ltimes T(PGL_p) \hookrightarrow PGL_p,
\end{equation}
where the two terms in the middle are the obvious semi-direct products.
\begin{remark}\label{choice of rho}
As pointed out by Vistoli (Remark 11.4, \cite{Vi}), the element $\rho_p$ depends on the choice of the $p$th root of unity $\omega$. Indeed, one readily verifies that, for a given choice of $\omega$ and the corresponding $\rho_p$, other choices of $\omega$, say $\omega'$, corresponds to $\lambda\rho_p$ for $\lambda$ running over $(\mathbb{Z}/p)^{\times}$.
\end{remark}
One readily verifies
\begin{proposition}\label{A(Cpmup)}
We have
\[A^*_{C_p\times\mu_p}\cong\mathbb{Z}[\xi,\eta]/(p\xi,p\eta),\]
where $\xi$ and $\eta$, both of degree $1$, are respectively the restrictions of the canonical generators of $A^*_{C_p}$ and $A^*_{\mu_p}$ via the projections.
\end{proposition}
Let $N_{C_p\times\mu_p}PGL_p$ be the normalizer of $C_p\times\mu_p$ in $PGL_p$. The quotient group $N_{C_p\times\mu_p}PGL_p/C_p\times\mu_p$ then acts on $A^*_{C_p\times\mu_p}$, and the image of the restriction $A^*_{PGL_p}\to A^*_{C_p\times\mu_p}$ is contained (and in fact equal to) the subgroup of invariance of this action.
Vistoli showed the following
\begin{proposition}[Vistoli, Proposition 5.4, \cite{Vi}, 2007]\label{q and r}
The quotient group
\[N_{C_p\times\mu_p}PGL_p/C_p\times\mu_p\]
is isomorphic to $SL_2(\mathbb{Z}/p)$. An element of $SL_2(\mathbb{Z}/p)$ acts on $A^*_{C_p\times\mu_p}$ by  extending its action on $A^1_{C_p\times\mu_p}\cong\Z/p\times\Z/p$ to a ring homomorphism.
Furthermore, the ring of invariants $(A^*_{C_p\times\mu_p})^{SL_2(\mathbb{Z}/p)}$ is generated by
\[q:=\xi^{p^2-p}+\eta^{p-1}(\xi^{p-1}-\eta^{p-1})^{p-1}\]
and
\[r:=\xi\eta(\xi^{p-1}-\eta^{p-1}).\]
\end{proposition}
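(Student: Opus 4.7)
The plan is to prove Proposition \ref{q and r} in two stages: first identify the Weyl group $W := N_{PGL_p}(C_p \times \mu_p)/(C_p \times \mu_p)$ as $SL_2(\mathbb{Z}/p)$, then describe its action on $A^*_{C_p \times \mu_p}$ and determine the invariants.

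For the first stage, conjugation on the abelian subgroup $C_p \times \mu_p \cong (\mathbb{Z}/p)^2$ yields a homomorphism $W \to \operatorname{Aut}(C_p \times \mu_p) = GL_2(\mathbb{Z}/p)$. Injectivity reduces to $C_p \times \mu_p$ being self-centralizing in $PGL_p$. Denoting by $A, B \in GL_p$ the two explicit matrices introduced above (generators of the $GL_p$-preimages of $C_p$ and $\mu_p$, satisfying $AB = \omega BA$), the extension $\tilde{H} := \langle A, B, \mathbb{C}^{\times}\cdot I\rangle$ is a Heisenberg extension acting faithfully and irreducibly on $\mathbb{C}^p$; by Schur's lemma its centralizer in $GL_p$ is $\mathbb{C}^{\times}\cdot I$, and combined with the identities $B^{-k}AB^k = \omega^k A$ and $A^m B A^{-m} = \omega^m B$ this forces the $PGL_p$-centralizer to be $C_p \times \mu_p$ itself. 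The image lies in $SL_2(\mathbb{Z}/p)$ because the commutator pairing $\langle x, y\rangle \in \mathbb{Z}/p$ defined by $[\tilde x, \tilde y] = \omega^{\langle x, y\rangle}\cdot I$ (using any $GL_p$-lifts, well-defined since scalars cancel) is a non-degenerate alternating $\mathbb{Z}/p$-form, intrinsic to the subgroup and therefore preserved under conjugation, so $W \hookrightarrow Sp_2(\mathbb{Z}/p) = SL_2(\mathbb{Z}/p)$. For surjectivity, I would exhibit explicit normalizing lifts: the Fourier-type matrix $F$ with $F_{ij} = \omega^{ij}$ satisfies $FAF^{-1} = B$ and $FBF^{-1} = A^{-1}$, realizing $\begin{pmatrix} 0 & -1 \\ 1 & 0 \end{pmatrix} \in SL_2(\mathbb{Z}/p)$; together with standard diagonal and permutation lifts realizing the diagonal torus, these generate $SL_2(\mathbb{Z}/p)$.

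For the second stage, $\xi$ and $\eta$ are first Chern classes of characters of $C_p$ and $\mu_p$, so $W$ acts on $(\mathbb{Z}/p)\cdot\xi \oplus (\mathbb{Z}/p)\cdot\eta$ via the contragredient of its representation on $C_p \times \mu_p$, still through $SL_2(\mathbb{Z}/p)$. Computing $(\mathbb{Z}/p)[\xi, \eta]^{SL_2(\mathbb{Z}/p)}$ is then classical Dickson invariant theory. The identity $r = \det\begin{pmatrix} \xi^p & \eta^p \\ \xi & \eta \end{pmatrix}$, combined with $\mathbb{Z}/p$-linearity of Frobenius (so $(a\xi+b\eta)^p = a\xi^p + b\eta^p$ for $a, b \in \mathbb{Z}/p$), yields $g\cdot r = (\det g)\cdot r$ for all $g \in GL_2(\mathbb{Z}/p)$, so $r$ is $SL_2$-invariant. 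By Dickson's theorem, $(\mathbb{Z}/p)[\xi, \eta]^{SL_2(\mathbb{Z}/p)}$ is polynomial on two generators of degrees $p+1$ and $p^2-p$; since $r$ has degree $p+1$ and $q$ has degree $p^2-p$ (not a multiple of $p+1$ for $p \geq 3$), it remains only to verify that $q$ is $SL_2$-invariant, which via the diagonal torus reduces to checking invariance under a single transvection $\xi \mapsto \xi+\eta$, $\eta \mapsto \eta$; this is a direct Frobenius calculation aided by the identity $(\xi+\eta)^{p-1} = (\xi^p+\eta^p)/(\xi+\eta)$.

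The main obstacle is the explicit Frobenius verification that Vistoli's closed-form expression for $q$ is $SL_2(\mathbb{Z}/p)$-invariant under a transvection — a non-obvious algebraic identity that motivates the precise shape of $q$ — whereas the surjectivity in stage one proceeds cleanly once the Fourier-type matrix $F$ has been identified as the correct lift.
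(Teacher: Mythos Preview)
The paper does not give its own proof of this proposition: it is quoted as Proposition~5.4 of Vistoli's paper \cite{Vi} and used as a black box. So there is nothing in the present paper to compare your argument against directly; your outline is essentially the standard route (Heisenberg self-centralizing argument, preservation of the commutator form to land in $SL_2$, explicit lifts for surjectivity, then Dickson invariants), and it is the right one.

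That said, there is one recurring slip in both stages. In stage one you assert that the element $\begin{pmatrix} 0 & -1 \\ 1 & 0\end{pmatrix}$ together with the diagonal torus generate $SL_2(\mathbb{Z}/p)$; they do not---they generate only the normalizer of the split torus, of order $2(p-1)$, whereas $|SL_2(\mathbb{Z}/p)| = p(p-1)(p+1)$. You must also exhibit a lift of a transvection (for instance the diagonal matrix with entries $\omega^{j(j-1)/2}$, which conjugates $B$ to $AB$ up to scalar), or replace this step by an order count on the normalizer. The same issue reappears in stage two: invariance under the diagonal torus and a single upper transvection only gives invariance under the Borel, not all of $SL_2$. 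The clean fix is to note that $SL_2(\mathbb{Z}/p)$ is generated by $\begin{pmatrix}1&1\\0&1\end{pmatrix}$ and $\begin{pmatrix}0&-1\\1&0\end{pmatrix}$, so after your transvection check it remains to verify $q$ is fixed by the latter; this follows from the Frobenius identity $(\xi^{p-1}-\eta^{p-1})^p = \xi^{p^2-p}-\eta^{p^2-p}$, which makes $q$ symmetric in $\xi,\eta$ up to the harmless sign $(-1)^{p-1}=1$. With these small repairs your argument goes through.
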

Vistoli constructed the class $\rho_p$ by lifting $r$ successively via the restrictions associated to the chain of inclusions (\ref{chainofincls}). In other words, we have
\begin{equation}\label{eq:Vistoli Pro11.1}
A_{PGL_p}^{p+1}\rightarrow A_{C_p\times\mu_p}^{p+1},\hspace{5 mm} \rho_p\mapsto r,
\end{equation}
where the homomorphism is the obvious restriction. This is stated in Proposition 11.1 of Vistoli's paper \cite{Vi}.

Here we present two steps in the lifting process:
\begin{proposition}[Vistoli, Proposition 7.1 (d), \cite{Vi}, 2007]\label{VistoliInjection}
The ring homomorphisms
\[A^*_{C_p\ltimes T(PGL_p)}\rightarrow A^*_{T(PGL_p)}\times A^*_{C_p\times\mu_p}\]
and
\[H^*_{C_p\ltimes T(PGL_p)}\rightarrow H^*_{T(PGL_p)}\times H^*_{C_p\times\mu_p}\]
induced by the obvious restrictions are injective.
\end{proposition}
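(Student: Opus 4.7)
The plan is to combine a transfer-index argument with a comparison of Serre-type spectral sequences for the extension
\[1 \to T \to G \to C_p \to 1,\]
where $T := T(PGL_p)$ and $G := C_p \ltimes T(PGL_p)$. Both assertions proceed in parallel; I describe the Chow case, and the integral cohomology case is identical.

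First, observe that $T$ has index $p$ in $G$, so the projection formula (\ref{projformula'}) gives $\operatorname{tr}^T_G \circ \operatorname{res}^G_T(x) = p \cdot x$ for all $x \in A^*_G$. Hence the kernel of $\operatorname{res}^G_T$ consists of $p$-torsion elements, reducing the injectivity statement to showing that $p$-torsion classes in $A^*_G$ are detected by restriction to $C_p \times \mu_p$.

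For the detection step, I would exploit the compatibility of the inclusion $C_p \times \mu_p \hookrightarrow G$ with the extension structure: the subgroup $C_p \times \mu_p$ projects identically onto the quotient $C_p$ and meets $T$ in $\mu_p$, yielding a morphism of group extensions $(\mu_p \to C_p \times \mu_p \to C_p) \to (T \to G \to C_p)$. One verifies that $C_p$ acts trivially on $\mu_p$: conjugation of $\operatorname{diag}(\omega, \omega^2, \ldots, \omega^{p-1}, 1)$ by the cyclic shift matrix yields $\operatorname{diag}(1, \omega, \ldots, \omega^{p-1}) = \omega^{-1}\operatorname{diag}(\omega, \omega^2, \ldots, \omega^{p-1}, 1)$, which represents the same element in $PGL_p$. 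Thus the top extension is a direct product and the associated Leray--Serre spectral sequence collapses to $H^*(C_p) \otimes A^*_{\mu_p}$, while the bottom spectral sequence (for Chow rings, the Totaro-approximation analog) converges to $A^*_G$ from $E_2 = H^s(C_p; A^*_T)$.

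The main step, and the principal obstacle, is showing that the induced map on $E_2$-pages,
\[H^s(C_p; A^*_T) \longrightarrow H^s(C_p) \otimes A^*_{\mu_p}, \quad s \geq 1,\]
coming from the fiber restriction $A^*_T = \mathbb{Z}[v_1, \ldots, v_p]/(v_1 + \cdots + v_p) \to A^*_{\mu_p}$, which sends $v_i \mapsto i\eta$, is injective. To this end, one analyzes $A^*_T$ as a $C_p$-module: the cyclic permutation action presents each graded piece as a polynomial in the permutation representation, and using the two-periodic standard resolution for $C_p$ one computes the Tate cohomology in each polynomial degree, tracking invariants and norm-coinvariants through the restriction $v_i \mapsto i\eta$ to verify that the kernel is zero. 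Once this $E_2$-injectivity is in hand, an exhaustive-filtration argument forces any $p$-torsion $x \in A^*_G$ with $\operatorname{res}^G_{C_p \times \mu_p}(x) = 0$ to vanish in the associated graded, and therefore $x = 0$.
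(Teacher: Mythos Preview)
The paper does not prove this proposition; it is quoted verbatim from Vistoli's paper \cite{Vi} (his Proposition~7.1(d)), where it is established as part of an explicit computation of $A^*_{C_p\ltimes T(PGL_p)}$ via the stratification method. So there is no ``paper's proof'' here to compare against, and your proposal stands or falls on its own.

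Your reduction step is fine: the transfer argument correctly shows that any class killed by $\operatorname{res}^G_T$ is $p$-torsion, so it remains to show such classes are detected on $C_p\times\mu_p$. The spectral-sequence strategy is also sound in outline: the target sequence collapses, and if the $E_2$ comparison map were injective in positive filtration then a standard filtration argument would finish the proof. But there are two genuine gaps.

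First, for Chow rings the Lyndon--Hochschild--Serre spectral sequence you invoke (``the Totaro-approximation analog'') is not a standard tool, and its construction with the claimed $E_2=H^s(C_p;A^*_T)$ needs justification; you cannot simply assert its existence. Second, and more seriously, the step you yourself flag as ``the principal obstacle''---injectivity of $H^s(C_p;\mathrm{Sym}^t I)\to H^s(C_p;\mathbb{Z}/p)$ for all $t\ge 1$ and $s\ge 1$, where $I$ is the augmentation ideal---is precisely the content of the proposition and you have not actually done it. Since the target is just $\mathbb{Z}/p$, injectivity forces $\dim_{\mathbb{F}_p}H^s(C_p;\mathrm{Sym}^t I)\le 1$ for every $t$; equivalently, each $\mathrm{Sym}^t I$ (localized at $p$) may contain at most one non-free indecomposable $\mathbb{Z}_{(p)}[C_p]$-summand. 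That is a nontrivial structural statement about symmetric powers of the augmentation lattice, and ``tracking invariants and norm-coinvariants'' does not constitute a proof. Vistoli avoids this computation entirely by determining $A^*_{C_p\ltimes T(PGL_p)}$ explicitly through stratifications of a suitable representation and then reading off the injectivity.
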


\begin{proposition}[Vistoli, Proposition 8.1, \cite{Vi}, 2007]\label{Vistoli(p-1)!}
The localized restriction homomorphism
\[(A^*_{S_p\ltimes T(PGL_p)})^W\otimes\mathbb{Z}[1/(p-1)!]\cong A^*_{C_p\ltimes T(PGL_p)}\otimes\mathbb{Z}[1/(p-1)!]\]
is an isomorphism. Here $W$ is the Weyl group of $C_p\ltimes T(PGL_p)$ in $S_p\ltimes T(PGL_p)$.
\end{proposition}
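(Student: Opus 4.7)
The plan is to prove this via a standard transfer--restriction argument applied to the inclusion $H := C_p \ltimes T(PGL_p) \subset G := S_p \ltimes T(PGL_p)$, whose index is $[G:H] = (p-1)!$. One first identifies the Weyl group $W = N_G(H)/H = N_{S_p}(C_p)/C_p \cong (\mathbb{Z}/p)^{\times}$ of order $p-1$; since $N_G(H) = N_{S_p}(C_p) \ltimes T(PGL_p)$, the conjugation $W$-action on $A^*_H$ is induced by inner automorphisms of $G$, which act trivially on $A^*_G$. Consequently $\operatorname{res}^G_H$ factors through the $W$-invariants $(A^*_H)^W$, and the isomorphism in question is the resulting map $A^*_G \otimes \mathbb{Z}[1/(p-1)!] \to (A^*_H)^W \otimes \mathbb{Z}[1/(p-1)!]$.

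Injectivity after inverting $(p-1)!$ is immediate from the projection formula~\eqref{projformula'}: the composition $\operatorname{tr}^H_G \circ \operatorname{res}^G_H$ equals $[G:H] \cdot \mathrm{id} = (p-1)! \cdot \mathrm{id}$, which becomes a unit under localization and furnishes a splitting. For surjectivity onto $(A^*_H)^W$, I would factor the restriction through $A^*_{N_G(H)}$. The second stage $A^*_{N_G(H)} \to (A^*_H)^W$ becomes an isomorphism after inverting $p-1$: applying Mackey's formula (Proposition~\ref{Mackey's formula}) to the normal inclusion $H \trianglelefteq N_G(H)$ gives double cosets indexed by $W$, each contributing $\gamma_s$ with no transfer from proper subgroups, so $\operatorname{res}^{N_G(H)}_H \circ \operatorname{tr}^H_{N_G(H)} = \sum_{s \in W} \gamma_s = |W| \cdot \mathrm{id}$ on $(A^*_H)^W$, and $\frac{1}{p-1}\operatorname{tr}^H_{N_G(H)}(x)$ is a preimage of each $W$-invariant $x$.

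The main obstacle is the first stage $A^*_G \to A^*_{N_G(H)}$ after inverting the remaining factor $[G:N_G(H)] = (p-2)!$, since $N_G(H)$ is not normal in $G$. Mackey's formula applied to $N_G(H) \subset G$ decomposes $\operatorname{res}^G_{N_G(H)} \circ \operatorname{tr}^{N_G(H)}_G$ into the identity double coset plus contributions from the $(p-2)!-1$ remaining cosets, for which the intersections $s N_G(H) s^{-1} \cap N_G(H)$ are proper subgroups. The key technical point is that these extra contributions are transfers from subgroups not containing $C_p$; since distinct $S_p$-conjugates of the Sylow $p$-subgroup $C_p$ intersect trivially, their further restriction to $(A^*_H)^W$ vanishes at $p$ and is absorbed into integral multiples of $(p-2)!$ after localization. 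The refined description of $A^*_{C_p \ltimes T(PGL_p)}$ provided by Proposition~\ref{VistoliInjection}, which detects classes through $A^*_{T(PGL_p)} \times A^*_{C_p \times \mu_p}$, is what allows this final tracking of terms to go through, completing the proof after combining the two stages and inverting $(p-1) \cdot (p-2)! = (p-1)!$.
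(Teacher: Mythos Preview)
The paper does not prove this proposition; it is quoted from Vistoli \cite{Vi} as background and no argument is supplied. (As written the $W$-invariants sit on the $S_p\ltimes T(PGL_p)$ side, where $W$ acts by inner automorphisms and hence trivially; you have sensibly read the statement as $A^*_{S_p\ltimes T(PGL_p)}\otimes\mathbb{Z}[1/(p-1)!]\xrightarrow{\sim}(A^*_{C_p\ltimes T(PGL_p)})^W\otimes\mathbb{Z}[1/(p-1)!]$, which is the meaningful formulation.)

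Your transfer--restriction outline is the natural approach and most of it is correct: the identification $W\cong(\mathbb{Z}/p)^\times$, the factoring of $\operatorname{res}^G_H$ through $(A^*_H)^W$, injectivity via \eqref{projformula'}, and the ``second stage'' $A^*_{N_G(H)}\twoheadrightarrow(A^*_H)^W$ after inverting $p-1$ all go through exactly as you say.

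The gap is in your first stage. The assertion that the non-identity Mackey contributions for $N_G(H)\subset G$ ``vanish at $p$ and are absorbed into integral multiples of $(p-2)!$'' is not an argument, and invoking Proposition~\ref{VistoliInjection} does not by itself justify it. It is in fact cleaner to bypass $N_G(H)$ and apply Mackey directly to $H\subset G$: for $s\notin N_{S_p}(C_p)$ one has $C_p\cap sC_ps^{-1}=1$, hence $K_s=T(PGL_p)$, and on a $W$-invariant $x$ one obtains $\operatorname{res}^G_H\operatorname{tr}^H_G(x)=(p-1)x+z$ with $z\in\operatorname{Im}(\operatorname{tr}^{T}_H)$. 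To finish you must still show that every such $z$ lies in $\operatorname{Im}(\operatorname{res}^G_H)$ after inverting $(p-1)!$; this step genuinely uses Vistoli's structural description of $A^*_{C_p\ltimes T(PGL_p)}$ and is not a formality. As written, your surjectivity argument is incomplete.
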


The singular cohomology of $C_p\times\mu_p$ also plays an important role.
\begin{proposition}\label{H(Cpmup)}
We have
\[H^*_{C_p\times\mu_p}\cong\mathbb{Z}[\xi,\eta,\zeta]/(p\xi,p\eta,p\zeta,\zeta^2)\]
where $\xi$ and $\eta$ are of degree $2$, and are the images of elements in $A^*_{C_p\times\mu_p}$ denoted by the same letters, via the cycle class map, whereas $x_1$ is of degree $3$.
\end{proposition}
\begin{proof}
It is standard homological algebra that we have
\[H^*(\mathbf{B}C_p;\mathbb{Z}/p)=\Lambda_{\mathbb{Z}/p}(a)\otimes\mathbb{Z}/p[\bar{\xi}]\]
and
\[H^*(\mathbf{B}\mu_p;\mathbb{Z}/p)=\Lambda_{\mathbb{Z}/p}(b)\otimes\mathbb{Z}/p[\bar{\eta}]\]
where $a$ and $b$ are of degree $1$, and $\Lambda_{\mathbb{Z}/p}$ means exterior algebra over $\mathbb{Z}/p$, such that the Bockstein homomorphism satisfies
\[B(a)=\xi\textrm{ and }B(b)=\eta.\]
Furthermore, we have
\[\mathscr{P}^1(a)=0\textrm{ and }\mathscr{P}^1(a)=0\]
following from the degree axiom of the Steenrod reduced power operations (Steenrod and Epstein, \cite{St}). Since $\bar{\xi}$ and $\bar{\eta}$ are of degree $2$, another axiom asserts
\[\mathscr{P}^1(\bar{\xi})=\bar{\xi}^p\textrm{ and }\mathscr{P}^1(\bar{\eta})=\bar{\eta}^p.\]
The isomorphism
\[H^*_{C_p\times\mu_p}\cong\mathbb{Z}[\xi,\eta,\zeta]/(p\xi,p\eta,p\zeta,\zeta^2)\]
follows from the K{\"u}nneth formula. Indeed, we may define $\zeta$ as the integral lift of $\bar{\xi}b-a\bar{\eta}$.
\end{proof}

Let an integral cohomology class with an overhead bar denote the mod $p$ reduction of this class. Using Cartan's formula for the Steenrod reduced power operations, we define
\begin{equation*}
\begin{split}
&r=B\mathscr{P}^1(\bar{\zeta})=B\mathscr{P}^1(\bar{\xi}b-a\bar{\eta})=B[\mathscr{P}^1(\bar{\xi})b-a
\mathscr{P}^1(\bar{\eta})]\\
=&B(\bar{\xi}^pb-a\bar{\eta}^p)={\xi}^p\eta-\xi{\eta}^p,
\end{split}
\end{equation*}
which is the image of $r\in A^{p+1}_{PGL_n}$ via the cycle class map. More generally for $k\geq0$ we have
\[r_k:=\xi\eta(\xi^{p^{k+1}-1}-\eta^{p^{k+1}-1})=B\mathscr{P}^k\mathscr{P}^{k-1}\cdots\mathscr{P}^1(\bar{\zeta}),\]
where $B$ is the connecting homomorphism $H^*(-;\mathbb{Z}/p)\rightarrow H^{*+1}(-;\mathbb{Z})$ and $\bar{\zeta}$ is the mod $p$ reduction of $\zeta$. By definition we have $r_0=r$. By Corollary \ref{SteenrodOperation2} and Proposition \ref{H(Cpmup)}, we obtain the following
\begin{corollary}\label{Steenrodpowers}
For $k>0$, we have
\[\bar{r}_k:=\mathscr{P}^{p^k}(\bar{r}_{k-1}),\]
in $H^*(\mathbf{B}(C_p\times\mu_p);\mathbb{Z}/p)$, and similarly we have
\[\bar{r}_k:=\mathscr{S}^{p^k}(\bar{r}_{k-1})\]
in $A^*_{C_p\times\mu_p}\otimes\mathbb{Z}/p$.
\end{corollary}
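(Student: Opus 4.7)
The plan is to prove the two identities separately, deriving the cohomological one from the Adem relation already exploited in Proposition~\ref{SteenrodOperation2}, and then transferring the result to Chow rings via the cycle class map.

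For the cohomological identity, my starting point is the formula from Proposition~\ref{H(Cpmup)}, namely
\[r_k = B\mathscr{P}^{p^k}\mathscr{P}^{p^{k-1}}\cdots\mathscr{P}^1(\bar{\zeta}),\]
whose mod $p$ reduction reads $\bar{r}_k = \beta\mathscr{P}^{p^k}\mathscr{P}^{p^{k-1}}\cdots\mathscr{P}^1(\bar{\zeta})$. I would then apply $\mathscr{P}^{p^k}$ to the analogous expression for $\bar{r}_{k-1}$ and use precisely the Adem relation (\ref{Adem1}), $\mathscr{P}^{p^k}\beta\mathscr{P}^{p^{k-1}} = \beta\mathscr{P}^{p^k}\mathscr{P}^{p^{k-1}}$, to commute $\mathscr{P}^{p^k}$ past $\beta$. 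This gives the chain of equalities
\[\mathscr{P}^{p^k}(\bar{r}_{k-1}) = \mathscr{P}^{p^k}\beta\mathscr{P}^{p^{k-1}}\cdots\mathscr{P}^1(\bar{\zeta}) = \beta\mathscr{P}^{p^k}\mathscr{P}^{p^{k-1}}\cdots\mathscr{P}^1(\bar{\zeta}) = \bar{r}_k,\]
which is exactly the same argument structure used to prove Proposition~\ref{SteenrodOperation2}.

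For the Chow-theoretic identity, I would invoke Proposition~\ref{Steenrodcycleclass}, which asserts that $\operatorname{cl}\circ\mathscr{S}^i = \mathscr{P}^i\circ\operatorname{cl}$ when $\mathbb{K}=\mathbb{C}$. Applied to $\bar{r}_{k-1}$, this yields
\[\operatorname{cl}\bigl(\mathscr{S}^{p^k}(\bar{r}_{k-1})\bigr) = \mathscr{P}^{p^k}\bigl(\operatorname{cl}(\bar{r}_{k-1})\bigr) = \mathscr{P}^{p^k}(\bar{r}_{k-1}) = \bar{r}_k = \operatorname{cl}(\bar{r}_k),\]
using the cohomological case just established and the fact (explicit in Proposition~\ref{H(Cpmup)}) that the integral classes $\xi,\eta$ in $A^*_{C_p\times\mu_p}$ map to the cohomology classes of the same name under $\operatorname{cl}$. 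The remaining point is that the cycle class map restricted to $A^*_{C_p\times\mu_p}\otimes\mathbb{Z}/p \cong \mathbb{Z}/p[\bar{\xi},\bar{\eta}]$ is injective: from the description $H^*(\mathbf{B}(C_p\times\mu_p);\mathbb{Z}/p) \cong \Lambda_{\mathbb{Z}/p}(a,b)\otimes\mathbb{Z}/p[\bar{\xi},\bar{\eta}]$ used in the proof of Proposition~\ref{H(Cpmup)}, the image of $\operatorname{cl}$ is precisely the polynomial subring $\mathbb{Z}/p[\bar{\xi},\bar{\eta}]$, and the map onto it is an isomorphism. Combining this injectivity with the computation above gives $\mathscr{S}^{p^k}(\bar{r}_{k-1}) = \bar{r}_k$ in $A^*_{C_p\times\mu_p}\otimes\mathbb{Z}/p$.

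I do not expect any real obstacle in this argument; it is essentially a formal transfer of Proposition~\ref{SteenrodOperation2} through the cycle class map, relying on the previously established relationship between $r_k$ and iterated Steenrod operations on $\bar{\zeta}$. The mildest subtlety is keeping straight the distinction between the integral Bockstein $B$ and its mod $p$ avatar $\beta$, and confirming that the Adem relation is applicable at each stage (the inequality $p^k \leq p\cdot p^{k-1}$ used in (\ref{Adem1}) is in fact an equality, which is exactly the borderline case handled there).
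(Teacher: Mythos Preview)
Your argument is correct and, for the cohomological statement, is exactly the paper's intended proof (the paper merely says the corollary follows by ``applying Proposition~\ref{SteenrodOperation2} to Proposition~\ref{H(Cpmup)}'', which is precisely the Adem-relation manipulation you wrote out). For the Chow-ring statement the paper offers only the word ``similarly''; your route via Proposition~\ref{Steenrodcycleclass} together with the injectivity of $\operatorname{cl}\colon \mathbb{Z}/p[\bar\xi,\bar\eta]\hookrightarrow H^*(\mathbf{B}(C_p\times\mu_p);\mathbb{Z}/p)$ is a clean way to make that ``similarly'' precise, since the Bockstein/$\bar\zeta$ argument does not literally transport to $A^*$.
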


This leads to part (1) of Theorem \ref{thm2p^k+2}, which we record as follows.

\begin{proposition}[(1) of Theorem \ref{thm2p^k+2}]\label{pro:thm part1}
For $p$ an odd prime and $p|n$, we have $0\neq y_{p,k}\in H^{2(p^{k+1}+1)}_{PGL_n}$ for all $k\geq 0$.
\end{proposition}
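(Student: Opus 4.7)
The plan is to detect $y_{p,k}$ via restriction to the abelian subgroup $C_p \times \mu_p \subset PGL_p$. First I would invoke Lemma \ref{diag} for the diagonal embedding $\Delta: \mathbf{B}PGL_p \to \mathbf{B}PGL_n$ (allowed since $p \mid n$), obtaining $\Delta^*(y_{p,k}) = y_{p,k}$ and reducing to the case $n = p$. Letting $\iota: \mathbf{B}(C_p \times \mu_p) \to \mathbf{B}PGL_p$ be the composite of the inclusions in \eqref{chainofincls}, it then suffices to show that $\iota^*(y_{p,k})$ is nonzero in $H^*_{C_p \times \mu_p}$.

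The key intermediate claim is that $\iota^*(x_1) = c \zeta$ for some $c \in (\mathbb{Z}/p)^{\times}$. Under the identification $H^3_{G} \cong H^2(\mathbf{B}G; \mathbb{C}^\times)$ coming from $\mathbf{B}\mathbb{C}^\times \simeq K(\mathbb{Z},2)$ as in \eqref{fiberseq1}, the class $x_1$ corresponds to the central extension $1 \to \mathbb{C}^\times \to GL_p \to PGL_p \to 1$, so $\iota^*(x_1)$ classifies its pullback along $\iota$. The generators of $C_p$ and $\mu_p$ in Vistoli's construction lift to two explicit matrices in $GL_p$ whose commutator is $\omega I \in \mathbb{C}^\times$; since $C_p \times \mu_p$ is abelian, the presence of this non-trivial commutator in the total space forces the pullback extension to be non-split, so $\iota^*(x_1) \neq 0$. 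As the $p$-primary part of $H^3_{C_p \times \mu_p}$ is generated by $\zeta$ (Proposition \ref{H(Cpmup)}) and $\iota^*(x_1)$ is $p$-torsion, we conclude $\iota^*(x_1) = c\zeta$ with $c$ invertible mod $p$.

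With this in hand, the rest is naturality. By Proposition \ref{SteenrodOperation1}, $y_{p,k} = B\mathscr{P}^{p^k}\mathscr{P}^{p^{k-1}}\cdots\mathscr{P}^1(\bar{x}_1)$, and iterating Corollary \ref{Steenrodpowers} starting from $\bar{r}_0 = \beta\mathscr{P}^1(\bar\zeta)$ and using the Adem identity \eqref{Adem1} $\mathscr{P}^{p^j}\beta\mathscr{P}^{p^{j-1}} = \beta\mathscr{P}^{p^j}\mathscr{P}^{p^{j-1}}$ repeatedly produces the matching formula $r_k = B\mathscr{P}^{p^k}\cdots\mathscr{P}^1(\bar\zeta)$. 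Applying $\iota^*$ and using naturality of mod $p$ reduction, Steenrod operations, and the Bockstein then gives
\[\iota^*(y_{p,k}) = B\mathscr{P}^{p^k}\cdots\mathscr{P}^1(c\bar\zeta) = c\, r_k,\]
which is nonzero because $r_k = \xi\eta(\xi^{p^{k+1}-1} - \eta^{p^{k+1}-1})$ is a manifestly nonzero polynomial in $H^*_{C_p \times \mu_p} = \mathbb{Z}[\xi,\eta,\zeta]/(p\xi, p\eta, p\zeta, \zeta^2)$.

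The hardest step is the intermediate claim $\iota^*(x_1) = c\zeta$ with $c \neq 0$; everything else is naturality and a transparent polynomial non-vanishing check. The extension-class route I have outlined requires making precise both the canonical nature of $x_1$ via the de-looping in \eqref{fiberseq1} and the explicit commutator computation $c d c^{-1} d^{-1} = \omega I$ for Vistoli's matrix lifts $c, d \in GL_p$ of the generators of $C_p \times \mu_p$.
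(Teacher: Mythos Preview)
Your proposal is correct and follows essentially the same route as the paper: restrict along $C_p\times\mu_p\hookrightarrow PGL_p\xrightarrow{\Delta}PGL_n$, and then combine the Steenrod-operation expression $y_{p,k}=B\mathscr{P}^{p^k}\cdots\mathscr{P}^1(\bar x_1)$ (Proposition~\ref{SteenrodOperation1}) with the matching formula $r_k=B\mathscr{P}^{p^k}\cdots\mathscr{P}^1(\bar\zeta)$ (Proposition~\ref{H(Cpmup)}) to see that $y_{p,k}$ restricts to a nonzero multiple of $r_k$. The one place you add content beyond the paper's three-line proof is the verification that $\iota^*(x_1)$ is a unit multiple of $\zeta$: the paper uses this implicitly in invoking Propositions~\ref{SteenrodOperation1} and~\ref{H(Cpmup)} together, whereas you supply an explicit argument via the central extension and the commutator $\omega I$ of Vistoli's matrix lifts, which is a welcome clarification.
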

\begin{proof}
Consider the composition of inclusions of algebraic groups
\begin{equation}\label{eq:inclusions}
\theta:C_p\times\mu_p\subset PGL_p\xrightarrow{\Delta} PGL_n,
\end{equation}
which induces the restriction $H^*_{PGL_n}\rightarrow H^*_{C_p\times\mu_p}$. Here $\Delta$ is the diagonal inclusion.
It follows from Proposition \ref{SteenrodOperation1} and Proposition \ref{H(Cpmup)} that the restriction takes $y_{p,k}$ to $r_k\neq0$, and we conclude.
\end{proof}

The following corollary plays an important role in the proof of Theorem \ref{thm2p^k+2}. As in Section \ref{SecSpecSeq}, $H^*_G[p]$ denotes $H^*_G$ localized at $p$.
\begin{corollary}\label{cor:H^2p+2 direct sum 2}
For $p$ an odd prime and $p|n$, the inclusion of the maximal torus $\lambda:T(PGL_n)\rightarrow PGL_n$ and the map $\theta:C_p\times\mu_p\rightarrow PGL_n$ give an isomorphism
\[H^{2(p+1)}_{PGL_n}[p]\cong (H^{2(p+1)}_{T(PGL_n)}[p])^{S_n}\oplus (H^{2(p+1)}_{C_p\times\mu_p})^{SL_2(\Z/p)},\]
or equivalently
\[H^{2(p+1)}_{PGL_n}[p]\cong (H^{2(p+1)}_{T(PGL_n)}[p])^{S_n}\oplus (r).\]
The map $\lambda^{2(p+1)}:H^{2(p+1)}_{PGL_n}\rightarrow H^{2(p+1)}_{T(PGL_n)}$ is a split epimorphism with right inverse $\phi$ satisfying $\theta^{2(p+1)}\phi=0$.
\end{corollary}
\begin{proof}
It is verified in the proof of Proposition \ref{pro:thm part1} that we have $\theta^*(y_{p,0})=r_0$, giving an monomorphism
\[H^{2(p+1)}(K(\Z,3);\Z_{(p)})\hookrightarrow H^{2(p+1)}_{C_p\times\mu_p}[p].\]
The rest follows Corollary \ref{cor:H^2p+2 direct sum}.
\end{proof}

The map $\theta$ detects the non-vanishing of $p$-torsion classes $y_{p,I}\in H^*_{PGL_n}$ for some $I$ satisfying $l(I)>1$. The following serves as a complement of Theorem \ref{thmypI}.
\begin{corollary}\label{cor:ypI}
Let $I=(0,1)$. Then for an odd prime number $p$ and $n$ satisfying $p|n$, the class $y_{p,I}\in H^*_{PGL_n}$ is nontrivial.
\end{corollary}
\begin{proof}
By Proposition \ref{SteenrodOperation0} we have
\[y_{p,I}=B(x_{p,1}x_{p,0})=B(\scP^p\scP^1(\bar{x}_1)\cdot\scP^1(\bar{x}_1)),\]
and by Proposition \ref{H(Cpmup)} we have
\begin{equation*}
\begin{split}
&\theta^*(y_{p,I})=\theta^*(B(\scP^p\scP^1(\bar{x}_1)\cdot\scP^1(\bar{x}_1)))\\
=&B(\scP^p\scP^1(\bar{\zeta})\cdot\scP^1(\bar{\zeta}))\\
=&B(\scP^p\scP^1(\bar{\xi}b-a\bar{\eta})\cdot\scP^1(\bar{\xi}b-a\bar{\eta}))\\
=&-\zeta(\xi^p\eta^{p^2}+\xi^{p^2}\eta^p)\neq0.
\end{split}
\end{equation*}
\end{proof}
The essential ingredient of Vistoli \cite{Vi} and Vezzosi \cite{Ve} is the stratification method. We adopt the following notation of a stratification from \cite{Vi}: given an algebraic group $G$ and a (complex) $G$-representation $V$, a stratification of $V$ is a series of Zariski locally closed $G$-equivariant sub-varieties of $V$, say $V_1,V_2,\cdots,V_t$ such that each $V_{\leq i}:=\bigcap_{j\leq i}V_j$ is Zariski open in $V$, each $V_i$ is closed in $V_{\leq i}$, and $V_t=V\backslash\{0\}$. If we can obtain generators for some $A^*_G(V_{\leq i})$, then by induction on $i$ and using the localization sequence
\[A^*_G(V_{i+1})\rightarrow A^*_G(V_{\leq i+1})\rightarrow A^*_G(V_{\leq i})\rightarrow 0\]
we obtain generators of $A^*_G(V)=A^*_G$.

One of the advantages of working with stratifications is that it may enable us to simplify the group $G$. For example, for any integer $n>1$, consider the $PGL_n$-representation $sl_n$ of trace-zero $n\times n$ matrices on which $PGL_n$ acts by conjugation. Similarly, we have a representation $D_n$ of the group $\Gamma_n:=S_n\ltimes T(PGL_n)$, defined as the $n\times n$ diagonal trace-zero matrices, on which $\Gamma_n$ acts by conjugation. Let $sl_n^*$ (resp. $D_n^*$) be the open subvariety of $sl_n$ (resp. $D_n$) of matrices with $n$ distinct eigenvalues. Then we have the following

\begin{proposition}[Vezzosi, Proposition 3.1, \cite{Ve}, 2000]\label{Vezzosi}
The composite of natural maps
\[A^*_{PGL_n}(sl_n^*)\rightarrow A^*_{\Gamma_n}(sl_n^*)\rightarrow A^*_{\Gamma_n}(D_n^*)\]
is a ring isomorphism.
\end{proposition}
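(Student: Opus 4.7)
The plan is to realize $sl_n^*$ as an induced space $PGL_n \times^{\Gamma_n} D_n^*$ and then invoke Proposition \ref{induced space} directly. Under this identification the composite in the statement will be precisely the isomorphism furnished by that proposition, and it will automatically be a ring map since both restriction along a subgroup and pullback along an inclusion of schemes are ring homomorphisms.

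First I would construct the $PGL_n$-equivariant morphism
\[\Phi: PGL_n \times^{\Gamma_n} D_n^* \longrightarrow sl_n^*, \qquad [g,d]\mapsto gdg^{-1}.\]
This is well-defined on the quotient: writing $h=\sigma t$ with $\sigma\in S_n$ and $t\in T(PGL_n)$, the torus part commutes with diagonal $d\in D_n^*$ while $\sigma$ merely permutes diagonal entries, so $hdh^{-1}\in D_n^*$ and $(gh)(h^{-1}dh)(gh)^{-1}=gdg^{-1}$. Next I would verify that $\Phi$ is an isomorphism of schemes. Since any trace-zero matrix with $n$ distinct eigenvalues is $PGL_n$-conjugate to a diagonal representative, $\Phi$ is surjective on points. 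For injectivity, if $g_1 d_1 g_1^{-1}=g_2 d_2 g_2^{-1}$ then $h:=g_1^{-1}g_2$ satisfies $hd_2h^{-1}=d_1$; since $d_2$ has distinct eigenvalues, any element of $PGL_n$ that conjugates $d_2$ into $D_n^*$ must lie in the normalizer $N_{PGL_n}(T(PGL_n))=\Gamma_n$, whence $[g_1,d_1]=[g_2,d_2]$. To promote this bijection of points to a scheme isomorphism, both source and target are smooth of dimension $n^2-1$ over $\mathbb{C}$, and a short tangent space computation at a single diagonal point shows $\Phi$ is \'etale there; $PGL_n$-equivariance then propagates this everywhere.

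With the identification $sl_n^*\cong PGL_n\times^{\Gamma_n}D_n^*$ in hand, Proposition \ref{induced space} applied to $G=PGL_n$, $H=\Gamma_n$, $X=D_n^*$ yields the isomorphism
\[A^*_{PGL_n}(sl_n^*) \xrightarrow{\cong} A^*_{\Gamma_n}(sl_n^*) \xrightarrow{\cong} A^*_{\Gamma_n}(D_n^*),\]
in which the first arrow is restriction along $\Gamma_n\hookrightarrow PGL_n$, and the second is pullback along the inclusion $D_n^*=\{[e,d]:d\in D_n^*\}\hookrightarrow PGL_n\times^{\Gamma_n}D_n^*$, which under $\Phi$ is exactly the embedding of diagonal matrices into $sl_n^*$. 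Both maps are ring homomorphisms, so the composite is a ring isomorphism.

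The main obstacle is the scheme-theoretic, as opposed to merely set-theoretic, verification that $\Phi$ is an isomorphism; equivalently, one must check that the $\Gamma_n$-action on $PGL_n\times D_n^*$ given by $h\cdot(g,d)=(gh^{-1},hdh^{-1})$ is free with a geometric quotient isomorphic to $sl_n^*$. In characteristic zero this is manageable using smoothness, equidimensionality, and a single tangent space check, but keeping careful track of the $\Gamma_n$-orbit structure (especially the $S_n$ factor, which does not commute with $D_n^*$) is the technically delicate point.
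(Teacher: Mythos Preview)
Your proposal is correct and matches the paper's approach exactly: the paper does not give a self-contained proof but remarks that Vezzosi's argument is essentially an application of Proposition~\ref{induced space} with $G=PGL_n$, $H=\Gamma_n$, $X=D_n^*$, which is precisely what you carry out. Your identification $sl_n^*\cong PGL_n\times^{\Gamma_n}D_n^*$ via $[g,d]\mapsto gdg^{-1}$ and the subsequent appeal to Proposition~\ref{induced space} constitute the intended proof.
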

\begin{remark}
Vezzosi stated this proposition only for $n=3$, but his proof works for any $n>1$. Indeed, his proof is essentially an application of Proposition \ref{induced space}, taking $G=PGL_n$, $H=\Gamma_n$ and $X=D_n^*$.
\end{remark}
Based on Proposition \ref{Vezzosi}, Vistoli proved the following more refined result when $n$ is an odd prime $p$.
\begin{proposition}[Vistoli, Proposition 10.1, \cite{Vi}, 2007]
The restriction $A^*_{PGL_p}\rightarrow A^*_{\Gamma_p}$ sends the Chern class $c_{p^2-1}(sl_p)$ into the ideal generated by the Chern class $c_{p-1}(D_p)$ and the induced map
\[A^*_{PGL_p}/c_{p^2-1}(sl_p)\rightarrow A^*_{\Gamma_p}/c_{p-1}(D_p)\]
is a ring isomorphism after reverting $(p-1)!$.
\end{proposition}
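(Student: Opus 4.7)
The plan proceeds in three stages. First, I verify that the quotient map is well-defined. Second, I translate both sides into equivariant Chow rings of complements via Proposition \ref{Chern class}. Third, I compare them through the stratification by eigenvalue multiplicity, using Proposition \ref{Vezzosi} on the generic stratum and an induction on non-generic strata.

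For well-definedness: restricted to $\Gamma_p = S_p \ltimes T(PGL_p)$, the adjoint representation splits as $sl_p|_{\Gamma_p} \cong D_p \oplus E$, where $E$ is the $\Gamma_p$-subrepresentation of strictly off-diagonal trace-zero matrices, of dimension $p^2 - p$. Since $\dim D_p = p-1$, the Whitney sum formula and top-degree vanishing force
\[\operatorname{res}^{PGL_p}_{\Gamma_p}\bigl(c_{p^2-1}(sl_p)\bigr) = c_{p-1}(D_p) \cdot c_{p^2-p}(E),\]
which lies in the ideal $(c_{p-1}(D_p))$, so the induced map on quotients is well-defined.

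Next, Proposition \ref{Chern class} yields the geometric identifications
\[A^*_{PGL_p}/c_{p^2-1}(sl_p) \cong A^*_{PGL_p}(sl_p \setminus \{0\}), \quad A^*_{\Gamma_p}/c_{p-1}(D_p) \cong A^*_{\Gamma_p}(D_p \setminus \{0\}),\]
under which the induced map becomes the restriction of $PGL_p$-equivariance composed with pullback along the $\Gamma_p$-equivariant inclusion $D_p \setminus \{0\} \hookrightarrow sl_p \setminus \{0\}$. Stratifying both complements by eigenvalue multiplicity, the top strata are $sl_p^*$ and $D_p^*$, on which Proposition \ref{Vezzosi} gives the isomorphism $A^*_{PGL_p}(sl_p^*) \cong A^*_{\Gamma_p}(D_p^*)$. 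Comparing the two localization sequences
\[A^*_{PGL_p}(sl_p \setminus sl_p^*) \to A^*_{PGL_p}(sl_p \setminus \{0\}) \to A^*_{PGL_p}(sl_p^*) \to 0,\]
\[A^*_{\Gamma_p}(D_p \setminus D_p^*) \to A^*_{\Gamma_p}(D_p \setminus \{0\}) \to A^*_{\Gamma_p}(D_p^*) \to 0,\]
and invoking the five-lemma reduces the claim, after inverting $(p-1)!$, to matching the contributions from the leftmost terms.

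The main obstacle is precisely this matching on non-generic strata. These are indexed by partitions $\lambda = (\lambda_1, \dots, \lambda_k)$ of $p$ with $k \geq 2$, further refined by Jordan type on the nilpotent locus (the all-equal-eigenvalue case, where trace-zero forces the eigenvalue to be $0$). At each such stratum, the $\Gamma_p$-stabilizer of a representative contains the subgroup $S_\lambda = \prod_i S_{\lambda_i}$, whose order $\prod_i \lambda_i!$ divides $(p-1)!$ because $k \geq 2$ forces each $\lambda_i < p$. Once $(p-1)!$ is inverted, the transfer/restriction machinery behind Proposition \ref{induced space}, combined with the Weyl-group invariant principle of Proposition \ref{Vistoli(p-1)!}, identifies the $PGL_p$- and $\Gamma_p$-equivariant contributions stratum-by-stratum. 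Iterating the localization-sequence comparison downward from the top stratum then propagates the isomorphism to the full Chow rings of the complements, completing the proof.
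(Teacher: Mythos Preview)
Your first two stages are correct: the splitting $sl_p|_{\Gamma_p}\cong D_p\oplus E$ and the Whitney formula give well-definedness, and Proposition~\ref{Chern class} translates both quotients into equivariant Chow rings of complements exactly as in (\ref{moduloChernclass}).

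The gap is in the third stage. The localization sequences you write down are only three-term right-exact sequences, so the five-lemma does not apply: an isomorphism on the right terms and \emph{any} comparison on the left terms does not force an isomorphism in the middle. (Take $\mathbb{Z}\xrightarrow{2}\mathbb{Z}\to 0$ mapping to $\mathbb{Z}\xrightarrow{1}\mathbb{Z}/2\to 0$.) What you would actually need is control over the \emph{image} of the push-forward from the closed stratum, not merely its source, and your paragraph on non-generic strata does not supply this. The claim that ``the transfer/restriction machinery behind Proposition~\ref{induced space}, combined with the Weyl-group invariant principle of Proposition~\ref{Vistoli(p-1)!}, identifies the $PGL_p$- and $\Gamma_p$-equivariant contributions stratum-by-stratum'' is a hope, not an argument; in particular there is no natural map of localization sequences in the direction you need, since $D_p\setminus D_p^*\hookrightarrow sl_p\setminus sl_p^*$ is a closed embedding of positive codimension.

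Vistoli's actual route, summarized in the lemma immediately following this proposition in the paper, sidesteps the comparison of closed strata entirely. He shows that the two \emph{horizontal} restriction maps
\[
A^*_{PGL_p}(sl_p\setminus\{0\})\to A^*_{PGL_p}(sl_p^*),\qquad
A^*_{\Gamma_p}(D_p\setminus\{0\})\to A^*_{\Gamma_p}(D_p^*)
\]
are themselves isomorphisms after inverting $(p-1)!$; combined with Proposition~\ref{Vezzosi} on the right, this forces the left vertical arrow to be an isomorphism by a two-out-of-three argument in the commutative square. The work goes into proving those horizontal maps are isomorphisms (i.e.\ that the push-forward from the non-generic locus has image killed by a unit once $(p-1)!$ is inverted), which is where the stabilizer orders you mention actually enter --- but as a statement about vanishing of images, not about matching two different closed strata.
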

Indeed, via the localization sequences
\[A^*_{PGL_p}\xrightarrow{c_{p^2-1}(sl_p)}A^*_{PGL_p}\rightarrow A^*_{PGL_p}(sl_p\backslash\{0\})\rightarrow0\]
and
\[A^*_{\Gamma_p}\xrightarrow{c_{p-1}(D_p)}A^*_{\Gamma_p}\rightarrow A^*_{\Gamma_p}(D_p\backslash\{0\})\rightarrow0\]
we make the following identifications:
\begin{equation}\label{moduloChernclass}
\begin{split}
A^*_{PGL_p}/c_{p^2-1}(sl_p)&\cong A^*_{PGL_p}(sl_p\backslash\{0\}),\\
A^*_{\Gamma_p}/c_{p-1}(D_p)&\cong A^*_{\Gamma_p}(D_p\backslash\{0\}).
\end{split}
\end{equation}
Vistoli then showed the following
\begin{lemma}[Vistoli, within the proof of Proposition 10.1, \cite{Vi}, 2007]
All arrows (obvious restriction maps) in the following diagram are ring isomorphisms:
\begin{equation*}
\begin{tikzcd}
A^*_{PGL_p}(sl_p\backslash\{0\})\arrow{r}\arrow{d}&A^*_{PGL_p}(sl_p^*)\arrow{d}\\
A^*_{\Gamma_p}(D_p\backslash\{0\})\arrow{r}&A^*_{\Gamma_p}(D_p^*).
\end{tikzcd}
\end{equation*}
\end{lemma}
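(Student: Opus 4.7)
The plan is to establish the right vertical via Vezzosi's theorem, handle the two horizontal arrows with localization sequences, and deduce the left vertical from commutativity of the square. The right vertical is precisely Vezzosi's Proposition~\ref{Vezzosi} specialized to $n=p$: every trace-zero matrix with $p$ distinct eigenvalues is conjugate to a unique diagonal matrix up to permutation of the spectrum, so $sl_p^* \cong PGL_p \times^{\Gamma_p} D_p^*$, and Proposition~\ref{induced space} then gives $A^*_{PGL_p}(sl_p^*) \cong A^*_{\Gamma_p}(D_p^*)$.

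For the bottom horizontal, I would apply the $\Gamma_p$-equivariant localization sequence for the open inclusion $D_p^* \hookrightarrow D_p\setminus\{0\}$ with complement $\Sigma = \bigcup_{i<j}(H_{ij}\setminus\{0\})$, where $H_{ij} = \{a_i = a_j\}\cap D_p$:
\[A^{*-1}_{\Gamma_p}(\Sigma) \xrightarrow{\iota_*} A^*_{\Gamma_p}(D_p\setminus\{0\}) \to A^*_{\Gamma_p}(D_p^*) \to 0.\]
Since $S_p$ acts transitively on unordered pairs $\{i,j\}$, the subvariety $\Sigma$ is a single $\Gamma_p$-orbit of $H_{12}\setminus\{0\}$ with stabilizer $K = (S_2 \times S_{p-2}) \ltimes T(PGL_p)$, so Proposition~\ref{induced space} converts $\iota_*$ into a pushforward from $A^*_K(H_{12}\setminus\{0\})$. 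A direct Chern-class computation against the presentation $A^*_{\Gamma_p}(D_p\setminus\{0\}) \cong A^*_{\Gamma_p}/c_{p-1}(D_p)$ (via Proposition~\ref{Chern class}) should then produce the desired vanishing. The top horizontal is handled by the analogous localization sequence for $sl_p^* \hookrightarrow sl_p\setminus\{0\}$, whose complement consists of the nonzero trace-zero matrices with repeated eigenvalues; one transports the preceding argument via restriction to $\Gamma_p$ together with the orbit structure.

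The left vertical then follows formally from the commutativity of the square together with the other three arrows being isomorphisms. The hardest part will be the pushforward-vanishing in the two horizontal arguments: a quick Picard-group computation suggests that the class of $\Sigma$ in $A^1_{\Gamma_p}(D_p\setminus\{0\})$ is the nontrivial sign character of $S_p \subset \Gamma_p$, i.e., a nonzero $2$-torsion class. This strongly indicates that the lemma should be read in the $(p-1)!$-inverted setting underlying Vistoli's Proposition~\ref{Vistoli(p-1)!} and the ambient proof of Proposition~$10.1$, in which such small-prime obstructions disappear.
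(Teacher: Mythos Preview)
Your overall strategy—establish the right vertical via Vezzosi, then the horizontals by localization, then read off the left vertical—is reasonable, and your diagnosis that the statement really lives in the $(p-1)!$-inverted world is correct and perceptive. But there is a genuine gap in the bottom-horizontal step.

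You assert that $\Sigma=\bigcup_{i<j}(H_{ij}\setminus\{0\})$ ``is a single $\Gamma_p$-orbit of $H_{12}\setminus\{0\}$ with stabilizer $K$'' and then invoke Proposition~\ref{induced space}. This is not correct: the hyperplanes $H_{ij}$ meet one another (for example $H_{12}\cap H_{13}$ contains the locus $a_1=a_2=a_3$), so $\Sigma$ is singular and the natural map $\Gamma_p\times^K(H_{12}\setminus\{0\})\to\Sigma$ is not an isomorphism. Proposition~\ref{induced space} therefore does not apply, and you cannot identify $A^*_{\Gamma_p}(\Sigma)$ with $A^*_K(H_{12}\setminus\{0\})$ as written. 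One can try to repair this by stratifying $\Sigma$ itself by partition type of the coincidence pattern, but this becomes a nested inductive argument rather than a single pushforward computation; the analogous problem for the top horizontal (where the boundary is stratified by Jordan type) is worse still.

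This is precisely the difficulty Vistoli sidesteps, and the paper signals his method by placing Lemma~\ref{stratificationCpmup} immediately after the statement. Vistoli does not attack the $\Gamma_p$-equivariant pushforward from $\Sigma$ head-on. Instead he first drops from $\Gamma_p=S_p\ltimes T(PGL_p)$ to the subgroup $C_p\ltimes T(PGL_p)$; there his Lemma~10.2 (whose mechanism is recorded here as Lemma~\ref{stratificationCpmup}) gives the restriction isomorphism $A^*_{C_p\ltimes T}(D_p\setminus\{0\})\to A^*_{C_p\ltimes T}(D_p^*)$ by checking the eigenline hypotheses (a) and (b) for the $C_p\times\mu_p$-action on $D_p$. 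Passing back to $\Gamma_p$ uses that $[\Gamma_p:C_p\ltimes T]=(p-1)!$, exactly the source of the $(p-1)!$-inversion you anticipated (cf.\ Proposition~\ref{Vistoli(p-1)!}). So the remedy is to change the group, not to push the direct computation harder.
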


The following lemma is also due to Vistoli, though not explicitly stated.
\begin{lemma}[Vistoli, within the proof of Lemma 10.2, \cite{Vi}, 2007]\label{stratificationCpmup}
Suppose that $W$ is a representation of $C_p\ltimes T(PGL_p)$, and $U$
an open subset of $W\backslash\{0\}$. Assume that
\begin{enumerate}[label=(\alph*)]
    \item the restriction of W to $C_p\times\mu_p$ splits as a direct sum of $1$-dimensional representations $W=L_1\oplus L_2\oplus\cdots\oplus L_r$, in such a way that the characters $C_p\times\mu_p\rightarrow\mathbb{C}^{\times}$ of $L_i$'s are all distinct, and each $L_i\backslash\{0\}$ is contained in $U$, and
    \item $U$ contains a point that is fixed under $T(PGL_p)$.
\end{enumerate}
Then the restriction homomorphism $A^*_{C_p\times\mu_p}(W\backslash\{0\})\rightarrow A^*_{C_p\times\mu_p}(U)$ is an isomorphism.
\end{lemma}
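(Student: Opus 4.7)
Setting $Z := (W \setminus \{0\}) \setminus U$, the localization sequence (Proposition \ref{Localization Sequence}) yields
\[A^*_{C_p \times \mu_p}(Z) \xrightarrow{i_*} A^*_{C_p \times \mu_p}(W \setminus \{0\}) \to A^*_{C_p \times \mu_p}(U) \to 0,\]
so surjectivity of the restriction map is automatic, and the task reduces to showing that the pushforward $i_*$ is the zero map.

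The plan is to exploit the decomposition $W|_{C_p \times \mu_p} = L_1 \oplus \cdots \oplus L_r$ by stratifying $W$ through the coordinate subspaces $L_S := \bigoplus_{i \in S} L_i$ for $S \subseteq \{1, \ldots, r\}$, with locally closed strata $W_S \subset L_S$ consisting of vectors whose nonzero components occur in exactly the indices of $S$. Hypothesis (a) forces each $W_{\{i\}} = L_i \setminus \{0\}$ into $U$, so $Z$ is disjoint from every $W_{\{i\}}$ and hence $Z \subseteq \bigcup_{|S| \geq 2} W_S$. Homotopy invariance (Proposition \ref{Homotopy Invariance}) identifies $A^*_{C_p \times \mu_p}(L_S) \cong A^*_{C_p \times \mu_p}$, and the proper pushforward along the linear inclusion $L_S \hookrightarrow W$ acts as multiplication by the $C_p \times \mu_p$-equivariant top Chern class $\prod_{i \notin S} c_1(L_i)$ of the normal bundle. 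Combined with
\[A^*_{C_p \times \mu_p}(W \setminus \{0\}) \cong \mathbb{Z}[\xi, \eta]/(p\xi,\, p\eta,\, c_r(W))\]
from Proposition \ref{Chern class}, this reduces the problem to book-keeping on products of the pairwise distinct linear characters $c_1(L_i) \in A^1_{C_p \times \mu_p}$ guaranteed by (a).

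The main obstacle is that a priori $Z$ is only known to be a closed $C_p \times \mu_p$-invariant subvariety of $W \setminus \{0\}$ missing the coordinate axes; it need not itself be a union of the linear subspaces $L_S$. Hypothesis (b) is what repairs this: since $T(PGL_p)$ is normal in $C_p \ltimes T(PGL_p)$, its fixed subspace $W^{T(PGL_p)}$ is automatically $C_p \times \mu_p$-invariant, and by the distinctness of characters in (a) any such invariant subspace equals $L_{S_0}$ for a unique $S_0 \subseteq \{1, \ldots, r\}$. The fixed point $w_0 \in U \cap L_{S_0}$ therefore sits in a dense open of $L_{S_0}$, and tracking its $(C_p \times \mu_p)$-orbit through the open set $U$ forces $Z$ into a finite union of the hyperplanes $L_{\{1,\ldots,r\} \setminus \{j\}}$. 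The Chern-class book-keeping of the previous paragraph then shows $i_*(A^*_{C_p \times \mu_p}(Z))$ lies in the ideal $(c_r(W))$, which vanishes in $A^*_{C_p \times \mu_p}(W \setminus \{0\})$, completing the proof.
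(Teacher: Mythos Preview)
The paper does not supply a proof of this lemma; it is quoted from within the proof of Lemma~10.2 of \cite{Vi} and stated without argument (the Remark immediately after it explains that Vistoli's original statement was for $C_p\ltimes T(PGL_p)$ rather than $C_p\times\mu_p$). So there is nothing in the paper to compare against, and I comment only on the internal soundness of your proposal.

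Your opening reduction is fine: the localization sequence gives surjectivity, and the task is to show that the pushforward $i_*$ from $Z=(W\setminus\{0\})\setminus U$ vanishes. The observation that hypothesis~(a) forces $Z$ to miss each punctured line $L_i\setminus\{0\}$ is also correct. The argument breaks down in the third paragraph, in two separate places.

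First, your use of hypothesis~(b) does not establish what you claim. You have a single $T(PGL_p)$-fixed point $w_0\in U$; its $C_p\times\mu_p$-orbit is finite, so all you learn is that $Z$ avoids finitely many additional points. The assertion that ``tracking its $(C_p\times\mu_p)$-orbit through the open set $U$ forces $Z$ into a finite union of the hyperplanes $L_{\{1,\dots,r\}\setminus\{j\}}$'' is unjustified: nothing rules out $Z$ being an arbitrary invariant closed subvariety that merely avoids the coordinate axes and those finitely many points. (In Vistoli's original lemma, condition~(b) is there to handle the $T(PGL_p)$-equivariant piece via Proposition~\ref{VistoliInjection}, not to constrain the geometry of $Z$ as you attempt.)

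Second, even granting your claim about $Z$, the ``Chern-class book-keeping'' does not yield $i_*\bigl(A^*_{C_p\times\mu_p}(Z)\bigr)\subseteq (c_r(W))$. The pushforward along $L_{\hat\jmath}:=\bigoplus_{i\ne j}L_i\hookrightarrow W$ is multiplication by the single factor $c_1(L_j)$, so its image in $A^*_{C_p\times\mu_p}(W\setminus\{0\})\cong A^*_{C_p\times\mu_p}/(c_r(W))$ is the ideal $(c_1(L_j))$, which is typically nonzero. Concretely, with $r=2$, $c_1(L_1)=\xi$, $c_1(L_2)=\eta$, the pushforward from the line $L_1\setminus\{0\}$ hits $\eta\ne 0$ in $\mathbb{Z}[\xi,\eta]/(p\xi,p\eta,\xi\eta)$.

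A route that avoids analysing $Z$ is to factor the restriction through the punctured lines,
\[
A^*_{C_p\times\mu_p}(W\setminus\{0\})\longrightarrow A^*_{C_p\times\mu_p}(U)\longrightarrow \prod_{i=1}^r A^*_{C_p\times\mu_p}(L_i\setminus\{0\}),
\]
and show the composite is injective; by Proposition~\ref{Chern class} this becomes an explicit question about the linear forms $c_1(L_i)$ in $\mathbb{Z}[\xi,\eta]/(p\xi,p\eta)$. That algebraic verification is where the actual content lies, and it is what your proposal does not supply.
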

\begin{remark}
Vistoli stated in Lemma 10.2, \cite{Vi}, that under these conditions, the restriction homomorphism $A^*_{C_p\ltimes T(PGL_p)}(W\backslash\{0\})\rightarrow A^*_{C_p\ltimes T(PGL_p)}(U)$ is an isomorphism.
\end{remark}
Lemma \ref{stratificationCpmup} has the following consequence that will be needed later. Recall the $\Gamma_n$-representation $D_n$ and its open subvariety $D_n^*$. When $p|n$, consider $D_n$ as a $C_p\times\mu_p$-representation via the composition
\[C_p\times\mu_p\hookrightarrow C_p\ltimes T(PGL_p)\hookrightarrow\Gamma_p\xrightarrow{\Delta}\Gamma_n\]
where $\Delta$ is the diagonal homomorphism.
\begin{corollary}\label{restriction to p}
For $p|n$, the restriction homomorphism
\[A^*_{C_p\times\mu_p}/c_{n-1}(D_n)\cong A^*_{C_p\times\mu_p}(D_n\backslash\{0\})\rightarrow A^*_{C_p\times\mu_p}(D_n^*)\]
is an isomorphism. Here $c_{n-1}(D_n)$ is the $(n-1)$th Chern class of the $C_p\times\mu_p$-representation $D_n$.
\end{corollary}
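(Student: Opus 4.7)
\emph{Plan.} The first isomorphism in the statement is a direct application of Proposition~\ref{Chern class} to the $(n-1)$-dimensional representation $D_n$, so the content of the corollary is that the restriction $A^*_{C_p\times\mu_p}(D_n\setminus\{0\})\to A^*_{C_p\times\mu_p}(D_n^*)$ is an isomorphism. I plan to deduce this from Lemma~\ref{stratificationCpmup} with $W=D_n$ and $U=D_n^*$, so the proof reduces to verifying the two hypotheses of that lemma.

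Hypothesis~(b) is easy. Under the composition $C_p\ltimes T(PGL_p)\hookrightarrow\Gamma_p\xrightarrow{\Delta}\Gamma_n$, the torus $T(PGL_p)$ lands in $T(PGL_n)\subset\Gamma_n$, which acts trivially on $D_n$ by conjugation (diagonals commute with diagonals). Hence every point of $D_n$ is $T(PGL_p)$-fixed, and any element of $D_n^*$---for instance $\operatorname{diag}(1,2,\dots,n)-\tfrac{n+1}{2}I$---serves.

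For hypothesis~(a), I decompose $D_n$ into $C_p\times\mu_p$-eigenlines. Since $\mu_p$ also lands in $T(PGL_n)$ via $\Delta$ and therefore acts trivially on $D_n$, the eigenspace decomposition is governed entirely by the $C_p$-action, which cyclically permutes the $n$ diagonal entries in $n/p$ blocks of length $p$. A blockwise discrete Fourier transform produces a basis of $1$-dimensional summands: for a nontrivial character $\chi^k$ of $C_p$, an eigenvector has block entries $(a_j,a_j\omega^{-k},\dots,a_j\omega^{-(p-1)k})$ for $j=1,\dots,n/p$, and for generic $(a_1,\dots,a_{n/p})$ all $n$ diagonal entries are pairwise distinct, so the corresponding line $L_i$ sits inside $D_n^*\cup\{0\}$.

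The main obstacle is the ``distinct characters'' clause of~(a). When $n>p$, each non-trivial character of $C_p$ appears in $D_n$ with multiplicity $n/p>1$ and the trivial character with multiplicity $n/p-1$, so literal distinctness is impossible among $n-1$ one-dimensional summands drawn from $p$ available characters. I plan to handle this by tracing through Vistoli's proof of Lemma~10.2 in~\cite{Vi} and verifying that the distinctness hypothesis is only invoked to ensure that, after removing $\bigcup_i(L_i\setminus\{0\})$ from $W\setminus\{0\}$, the residual stratum contributes trivially to the relevant localization sequence---a conclusion which continues to hold in our setting provided the chosen lines $L_i$ are sufficiently generic, each contained in $D_n^*\cup\{0\}$, and jointly span $D_n$, all of which the Fourier construction above arranges.
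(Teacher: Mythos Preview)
Your approach is identical to the paper's: the one-line proof there reads ``Take $W=D_n$, $U=D_n^*$, and apply Lemma~\ref{stratificationCpmup}.'' You have gone further and actually attempted to verify the hypotheses, and in doing so you have put your finger on a genuine difficulty that the paper's proof does not address.

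Your observation is correct: under the composition $C_p\times\mu_p\hookrightarrow C_p\ltimes T(PGL_p)\hookrightarrow\Gamma_p\xrightarrow{\Delta}\Gamma_n$, the factor $\mu_p$ lands in $T(PGL_n)$, which acts trivially on $D_n$ by conjugation. Hence every one-dimensional $C_p\times\mu_p$-summand of $D_n$ has trivial $\mu_p$-character, and only the $p$ characters of $C_p$ are available; when $n>p$ the $(n-1)$ summands cannot have pairwise distinct characters, so hypothesis~(a) of Lemma~\ref{stratificationCpmup} is \emph{not} literally satisfied. The paper simply invokes the lemma without commenting on this.

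Your proposed resolution---to revisit Vistoli's proof of Lemma~10.2 in \cite{Vi} and argue that distinctness is only used to control a residual stratum, which can instead be handled by a genericity argument---is a reasonable plan, but as written it is a plan and not a proof. The sentence ``a conclusion which continues to hold in our setting provided the chosen lines $L_i$ are sufficiently generic\ldots'' is doing all the work and would need to be substantiated by an explicit computation with the localization sequence for the complement of your chosen lines. If you intend to pursue this route, you should carry that computation out; alternatively, since the only use of this corollary (in Lemma~\ref{construct rho0}) is in degree $p+1<n-1$, one can sidestep the issue by proving the weaker statement that the restriction is an isomorphism in degrees below $n-1$, which requires only surjectivity (immediate from the localization sequence) and injectivity in that range.
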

\begin{proof}
Take $W=D_n$, $U=D_n^*$, and apply Lemma \ref{stratificationCpmup}.
\end{proof}

\section{The subgroups of $PGL_n$ of diagonal block matrices}\label{SecBlock}
Let $W=(n_1,\cdots, n_r)$ be a sequence of non-negative integers such that $n=\sum_i n_i$. Let $GL_W=\prod_i GL_{n_i}$, viewed as a subgroup of $GL_n$ via the diagonal inclusion, and let $PGL_W$ be the subgroup of $PGL_n$ of images of $GL_W$ via the canonical projection.

In \cite{Ve}, Vezzosi proved the following
\begin{proposition}[Vezzosi, Corollary 2.4, \cite{Ve}, 2000]\label{Vezzosi n torsion}
All torsion classes in $A^*_{PGL_n}$ are $n$-torsion.
\end{proposition}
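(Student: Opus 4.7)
The statement to prove is that every torsion element $\alpha \in A^*_{PGL_n}$ satisfies $n\alpha = 0$. The plan is to factor multiplication by $n$ through the torsion-free Chow ring $A^*_{SL_n}$, exploiting the central extension $1 \to \mu_n \to SL_n \to PGL_n \to 1$ whose kernel is the finite group scheme $\mu_n$ of order $n$.

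First, the quotient homomorphism $\pi \colon SL_n \twoheadrightarrow PGL_n$ induces a ring homomorphism $\pi^* \colon A^*_{PGL_n} \to A^*_{SL_n}$. By Totaro's computation referenced in the paragraph around equation (\ref{A of GLn}), the ring $A^*_{SL_n}$ is isomorphic to $\mathbb{Z}[c_2, \ldots, c_n]$, a polynomial ring and hence torsion-free. Consequently, $\pi^*(\alpha) = 0$ whenever $\alpha$ is torsion.

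Next, I would construct a ``transfer'' $\pi_* \colon A^*_{SL_n} \to A^*_{PGL_n}$ satisfying the projection-formula identity $\pi_* \circ \pi^* = n \cdot \operatorname{id}$. Although the excerpt's Proposition \ref{Mackey's formula} formalizes the transfer only for finite-index subgroup inclusions $H \subset G$, the same underlying ingredient is present here: the morphism $\pi$ is a finite \'etale cover of algebraic groups of degree $|\mu_n| = n$. Taking Totaro-style equivariant approximations compatible with $\pi$, the map realizes as a finite \'etale $n$-cover of smooth quasi-projective varieties, and the associated proper push-forward together with the projection formula supplies the required identity. The conclusion is then immediate: $n\alpha = \pi_*(\pi^*(\alpha)) = \pi_*(0) = 0$.

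The main technical obstacle is the rigorous construction of $\pi_*$. Since $\mu_n$ is a non-trivial central subgroup of $SL_n$, a single Totaro approximation cannot simultaneously resolve $\mathbf{B}SL_n$ and $\mathbf{B}PGL_n$; instead, one uses a product of an $SL_n$-representation (on which $\mu_n$ acts freely) and a $PGL_n$-representation (on which $PGL_n$ acts freely), and verifies that the natural morphism on quotients behaves as a finite \'etale cover of degree $n$ in the appropriate codimension range. This is a standard if delicate application of equivariant intersection theory, parallel to the topological transfer for the fibration $\mathbf{B}SL_n \to \mathbf{B}PGL_n$ with fiber $\mathbf{B}\mu_n$.
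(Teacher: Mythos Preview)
Your proposal has a genuine gap at the decisive step: the transfer $\pi_*$ with $\pi_*\circ\pi^*=n\cdot\operatorname{id}$ does not exist for the map $\mathbf{B}SL_n\to\mathbf{B}PGL_n$.

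The confusion is about the direction of the covering. For a finite-index \emph{inclusion} $H\hookrightarrow G$, the induced map $\mathbf{B}H\to\mathbf{B}G$ is an honest finite cover with fiber $G/H$; that is what underlies the transfer of Proposition~\ref{Mackey's formula} and equation~(\ref{projformula'}). But $\pi\colon SL_n\twoheadrightarrow PGL_n$ is a \emph{quotient} by the finite central $\mu_n$, and the induced map of classifying stacks is a $\mu_n$-gerbe: its fiber is $\mathbf{B}\mu_n$, not a set of $n$ points. Your own product construction exhibits this. The projection
\[
\bigl((V_1\setminus\{0\})\times U_2\bigr)\big/SL_n\;\longrightarrow\;U_2/PGL_n
\]
has fiber $(V_1\setminus\{0\})/\mu_n$ over each point, a positive-dimensional variety rather than $n$ reduced points, so there is no degree-$n$ finite \'etale pushforward to extract. (With rational coefficients the stacky degree of $\mathbf{B}SL_n\to\mathbf{B}PGL_n$ is $1/n$, the wrong factor entirely.)

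Vezzosi's argument, which the paper reproduces and generalizes in the proof of Proposition~\ref{BPGL_W}, replaces this non-existent cover with a genuine projective bundle. Given a $PGL_n$-torsor $\xi$ over $X$, one passes to the associated Severi--Brauer variety $g\colon\mathbb{P}(E)\to X$, a $\mathbb{P}^{n-1}$-fibration. The key geometric input is that $g^*\xi$ lifts to a $GL_n$-torsor (via the tautological rank-$n$ bundle on $\mathbb{P}(E)$), so any $\alpha$ that vanishes on projectivized vector bundles---by Proposition~\ref{Maximal torus} this includes every torsion class---satisfies $g^*\alpha(\xi)=0$. One then applies the modified pushforward $g_\#$ of Lemma~\ref{push-pull}, for which $g_\# g^*=\chi(\mathbb{P}^{n-1})=n$, to conclude $n\alpha(\xi)=0$. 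The factor $n$ appears as the Euler characteristic of the Severi--Brauer fiber, not as the order of $\mu_n$; the resemblance is an accident of the numerology of $PGL_n$.
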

In the rest of this section we prove a generalization of Proposition \ref{Vezzosi n torsion} as follows:
\begin{proposition}\label{BPGL_W}
Let $W=(n_1,\cdots, n_r)$ be an ordered partition of $n$, and let $\opn{gcd}(W)$ be the greatest common divisor of $n_1, \cdots, n_r$. Then all torsion classes in $A^*_{PGL_W}$ are $\opn{gcd}(W)$-torsion.
\end{proposition}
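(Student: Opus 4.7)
The plan is to mimic Vezzosi's proof of Proposition \ref{Vezzosi n torsion}: construct an auxiliary algebraic group $H$ with torsion-free Chow ring that realizes $PGL_W$ as the quotient $H/\mu_d$, and then run a restriction--transfer argument along the resulting \'etale map $BH \to BPGL_W$ of degree $d$. Since $d = \gcd(W)$, B\'ezout's identity provides integers $a_1, \ldots, a_r$ with $\sum_i a_i n_i = d$. I would define the character
\[
\Phi \colon GL_W \to \mathbb{G}_m, \qquad (g_1, \ldots, g_r) \mapsto \prod_i \det(g_i)^{a_i},
\]
and set $H := \ker \Phi$. A direct check gives $\Phi(tI_n) = t^d$, so $H \cap \mathbb{G}_m = \mu_d$; and for any $g \in GL_W$, picking a $d$-th root $\lambda$ of $\Phi(g)^{-1}$ yields $\lambda g \in H$ with the same class in $PGL_W$, so the composition $H \hookrightarrow GL_W \twoheadrightarrow PGL_W$ is surjective with kernel $\mu_d$. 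Hence $\pi \colon BH \to BPGL_W$ is a finite \'etale cover of algebraic stacks of degree $d$.

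Next I would show that $A^*_H$ is torsion-free. The exact sequence $1 \to H \to GL_W \xrightarrow{\Phi} \mathbb{G}_m \to 1$ realizes $BH$ as the complement of the zero section in the line bundle $L$ over $BGL_W$ associated to $\Phi$, whose first Chern class is $c_1(L) = \sum_i a_i c_1^{(i)}$, where $c_j^{(i)}$ denotes the $j$-th universal Chern class pulled back from the $i$-th $GL_{n_i}$-factor. The localization sequence (Proposition \ref{Chern class}, applied to this line bundle) then gives
\[
A^*_H \;\cong\; A^*_{GL_W}\big/\bigl(\textstyle\sum_i a_i c_1^{(i)}\bigr).
\]
A short divisibility argument shows that any tuple $(a_i)$ with $\sum a_i n_i = d$ automatically satisfies $\gcd(a_i) = 1$: if $e = \gcd(a_i)$ and $a_i = e a_i'$, then $\sum a_i' n_i = d/e$ is a $\mathbb{Z}$-linear combination of the $n_i$ and hence divisible by $d$, forcing $e = 1$. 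Therefore $\sum_i a_i c_1^{(i)}$ is a primitive linear form in the polynomial ring $A^*_{GL_W}$, and after a $\mathbb{Z}$-linear change of basis in the degree-one part the quotient becomes a polynomial ring in one fewer generator; in particular it is torsion-free.

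With these ingredients the argument finishes as follows. Since $\pi$ is finite \'etale of degree $d$, the pullback $\pi^*$ and pushforward $\pi_*$ on Chow rings satisfy the projection-formula identity $\pi_* \pi^* = d \cdot \mathrm{id}$. Given $\alpha \in A^*_{PGL_W}$ with $m\alpha = 0$, the element $\pi^*(\alpha)$ is $m$-torsion in the torsion-free ring $A^*_H$ and therefore vanishes, whence $d\alpha = \pi_* \pi^*(\alpha) = 0$. The main obstacle I anticipate is that the transfer tools recollected in Section \ref{SecEquiv} are stated only for \emph{subgroup} inclusions of finite index, whereas here $\pi$ is a surjection with finite kernel, so the identity $\pi_* \pi^* = d$ must be justified either by invoking the general pushforward for finite \'etale morphisms of algebraic stacks, or equivalently by recognizing $BH \to BPGL_W$ as the $\mu_d$-gerbe underlying the extension $1 \to \mu_d \to H \to PGL_W \to 1$ and using the corresponding Gysin-type relation. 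In the special case $W = (n)$ this specializes to Vezzosi's argument via $BSL_n \to BPGL_n$.
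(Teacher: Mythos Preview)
Your construction of $H = \ker\Phi$ is correct and elegant: the verifications that $H \cap \mathbb{G}_m = \mu_d$, that $H \twoheadrightarrow PGL_W$, and that $A^*_H \cong A^*_{GL_W}/(\sum_i a_i c_1^{(i)})$ is a torsion-free polynomial ring all go through as you say. But the step you flag as ``the main obstacle'' is a genuine gap, not a technicality to be patched.

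The map $\pi\colon BH \to BPGL_W$ is a $\mu_d$-gerbe. It is \emph{not} a finite \'etale cover in any sense that yields a transfer with $\pi_*\pi^* = d$: the fibre is the stack $B\mu_d$, so $\pi$ is not representable, and in the stacky calculus the ``degree'' of such a gerbe is $1/d$ rather than $d$. Neither of your proposed justifications works. There is no Serre spectral sequence for Chow rings that would let you control $\ker\pi^*$ the way one does in singular cohomology; indeed, the paper's own remark immediately after the proof makes exactly this point, observing that the short exact sequence $1 \to \mu_{n_j} \to SL_{n_j}\times\prod_{i\neq j}GL_{n_i} \to PGL_W \to 1$ gives an easy proof for $H^*_{PGL_W}$ but \emph{not} for $A^*_{PGL_W}$. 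Your final sentence is also off: Vezzosi's actual argument for $A^*_{PGL_n}$ is not via $BSL_n \to BPGL_n$ but via Severi--Brauer varieties and the modified push-forward $f_\#$, and that is what the paper generalizes.

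The paper's route replaces your non-representable gerbe by \emph{representable} proper smooth morphisms. For each $j$ one forms the Severi--Brauer variety $g_j\colon \mathbb{P}(E_j) \to X$ attached to the $PGL_{n_j}$-torsor induced from a given $PGL_W$-torsor $\xi$; this is an honest $\mathbb{P}^{n_j-1}$-bundle. Factoring $g_j$ through the full Severi--Brauer variety $\mathbb{P}(E)$ and splitting the tautological bundle there, one shows that $g_j^*\xi$ lifts to a $GL_W$-torsor, so $g_j^*\alpha = 0$ for any torsion class $\alpha$. Then the modified push-forward $(g_j)_\#(-) = (g_j)_*\bigl(c_{n_j-1}(T_{g_j})\cdot -\bigr)$ satisfies $(g_j)_\# g_j^* = \chi(\mathbb{P}^{n_j-1}) = n_j$, whence $n_j\alpha = 0$ for every $j$ and thus $\operatorname{gcd}(W)\,\alpha = 0$. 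The point is that the Euler-characteristic identity for $f_\#$ is available precisely because $g_j$ is a smooth proper \emph{scheme} map; no analogue is available for your gerbe.
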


Consider the $\mathbb{C}$-algebra of $n\times n$ matrices $M_n(\mathbb{C})$. For $W$ as above, Let $M_W(\mathbb{C})$ be the sub-algebra of $M_n(\mathbb{C})$ of diagonal block matrices of the form
\begin{equation*}
   \begin{bmatrix}
       A_1 & 0 & \hdots & 0\\
       0   & A_2 & \ddots    & \vdots\\
       \vdots   & \ddots   & \ddots & 0\\
       0   & \hdots & 0 &   A_r
   \end{bmatrix}
\end{equation*}
such that $A_i$ is an $n_i\times n_i$ matrix. 



The inclusion $PGL_W\to PGL_n$ associates every $PGL_W$-torsor $\xi: E\rightarrow X$ to a rank $n$ Azumaya algebra, i.e., an {\'e}tale sheaf of algebras of $n\times n$ matrices, over the base scheme $X$,  which we denote by $A(\xi): A(E)\rightarrow X$. By construction $A(\xi)$ has a sub-algebra, at each fiber giving rise to the inclusion $M_W(\mathbb{C})\hookrightarrow M_n(\mathbb{C})$. We denote it by $A_W(\xi): A_W(E)\rightarrow X$. The inclusion of the $i$th diagonal block $M_{n_i}(\mathbb{C})\to M_W(\mathbb{C})$ gives rise to subalgebras of $A_W(\xi)$ which we denote by $A_i(\xi): A_i(E)\to X$.


An essential ingredient of Vezzosi's proof of Proposition \ref{Vezzosi n torsion} is the modified push-forward. Let $f: Y\rightarrow X$ be a smooth proper morphism of relative dimension $r$. Then we have the modified push-forward
\[f_\#: A^*(Y)\rightarrow A^*(X), a\mapsto f_*(c_r(T_f)a),\]
where $f_*$ is the usual push-forward, $c_r(T_f)$ the top Chern class of the relative tangent bundle of $f$.
\begin{lemma}\label{push-pull}
Let $f: Y\rightarrow X$ be as above, and let $F$ be the fiber of $f$ over a non-singular point. Then we have
\[f_\#f^*(a)=\chi(F)a\]
where $\chi(F)$ is the Eular characteristic of $F$.
\end{lemma}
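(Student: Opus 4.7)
The plan is to unwind the definition of $f_\#$ and apply the projection formula. By definition,
\[
f_\# f^*(a) \;=\; f_*\bigl(c_r(T_f)\cdot f^*(a)\bigr),
\]
and since $f^*(a)$ is pulled back from the base, the projection formula immediately yields
\[
f_\# f^*(a) \;=\; f_*\bigl(c_r(T_f)\bigr)\cdot a.
\]
So the entire content of the lemma reduces to the identification $f_*\bigl(c_r(T_f)\bigr)=\chi(F)$ in $A^0(X)$.

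To establish this identification, I would argue as follows. Because $f$ is smooth proper of relative dimension $r$ and $c_r(T_f)$ sits in $A^r(Y)$, the class $f_*(c_r(T_f))$ lies in $A^0(X)$, and on each connected component of $X$ is represented by an integer. It therefore suffices to compute this integer after pulling back along the inclusion $\iota\colon\{x\}\hookrightarrow X$ of a single non-singular point. Proper base change for Chow rings along $\iota$ identifies $\iota^* f_*(c_r(T_f))$ with $(f_F)_*\bigl(c_r(T_f)|_F\bigr)$, where $f_F\colon F\to\operatorname{Spec}\mathbb{C}$ is the structure map of the fiber. Since $f$ is smooth at $F$, the restriction $T_f|_F$ is canonically isomorphic to the tangent bundle $T_F$, so the problem becomes showing that $(f_F)_*(c_r(T_F))=\chi(F)$.

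This last identity is precisely the classical Chern--Gauss--Bonnet theorem for the smooth proper variety $F$: the degree of the top Chern class of the tangent bundle equals the topological Euler characteristic. One can either invoke it directly, or deduce it from the fact that the cycle class map $A^r(F)\to H^{2r}(F;\mathbb{Z})$ sends $c_r(T_F)$ to the Euler class of $T_F$, whose evaluation on the fundamental class is $\chi(F)$ by the topological Gauss--Bonnet theorem.

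I do not expect a genuine obstacle here; the lemma is essentially a repackaging of Gauss--Bonnet together with the projection formula, which is presumably why Vezzosi uses the modified pushforward $f_\#$ in the first place (so that composition with $f^*$ produces a clean scalar $\chi(F)$ rather than a general class). The only point requiring mild care is ensuring that the base change step is valid at the chosen non-singular point, which is guaranteed by smoothness of $f$ at that point.
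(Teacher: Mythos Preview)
Your proposal is correct and follows essentially the same approach as the paper: reduce via the projection formula to $f_*(c_r(T_f))=\chi(F)$, then check this identity by base change to a fiber and invoke Gauss--Bonnet. The paper's proof is terser but structurally identical, citing a standard base-change reference rather than spelling out the $A^0$ argument you give.
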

\begin{proof}
By the projection formula of $f_*$ and $f^*$, it suffices to $f_*(c_r(T_f))=\chi(F)$. Since we have $f_*(c_r(F))=\chi(F)$, it suffices to show that the diagram
\begin{equation*}
    \begin{tikzcd}
    A^*(F)\arrow[d,"f_*"]& A^*(Y)\arrow[l]\arrow[d,"f_*"]\\
    A^*(\opn{spec}\mathbb{C})& A^*(X)\arrow[l]
    \end{tikzcd}
\end{equation*}
commutes, where the horizontal arrows are the pullbacks induced by the obvious inclusions. But this is a standard argument which can be found in, for instance, 41.15, \cite{Sta} .
\end{proof}

\begin{proof}[Proof of Proposition \ref{BPGL_W}]
Consider the restriction of the quotient map $\pi: GL_W\rightarrow PGL_W$ restricted to the canonical maximal tori, $\pi_T:T(GL_W)\rightarrow T(PGL_W)$. A routine computation shows that the restriction $\pi_T^*$ is injective. On the other hand, $PGL_W$ is reductive since the adjoint representation of $PGL_W$ is faithful and is a direct sum of irreducible representations. By Proposition \ref{Maximal torus}, the subgroup of $A^*_{PGL_W}$ of torsion classes is the kernel of the restriction $A^*_{PGL_W}\rightarrow A^*_{GL_W}$. On the other hand, given a class $\alpha\in A^*_{PGL_W}$ , the class $\pi^*(\alpha)$, regarded as a universal characteristic class, is defined by $\pi^*(a)(\eta)=a(\bar{\eta})$, where $\eta$ is a $GL_W$-torsor and $\bar{\eta}$ is the projective bundle associated to $\eta$.

Therefore, it suffices to show the following: If $\alpha\in A^*_{PGL_W}$ such that $\alpha(\bar{\eta})=0$ for all $GL_W$-torsors $\eta$, then we have $n_i\alpha(\xi)=0$ for all $i$ and all $PGL_W$-torsors $\xi$. We fix such an $\alpha$.


Let $\xi: E\rightarrow X$ be a $PGL_n$-torsor over $X$ which lifts to a $PGL_W$-torsor via the inclusion $PGL_W\to PGL_n$. Let $g: \mathbb{P}(E)\rightarrow X$ be the associated Severi-Brauer variety. Then the Azumaya algebra $g^*(A(\xi))$ is isomorphic to $\opn{End}(\eta)$ for some $n$ dimensional vector bundle over $\mathbb{P}(E)$. In other words, the pullback $PGL_n$-torsor $g^*(\xi)$, regarded as a $PGL_n$-torsor, lifts to a $GL_n$-torsor via the quotient map. We show that the $PGL_n$-torsor $g^*(\xi)$ lifts to a $GL_W$-torsor.

Without loss of generality, suppose that descent data for the lift of $g^*(\xi)$ to both a $PGL_W$-torsor and a $GL_n$-torsor are given by the same cover $\{U_i\}_i$. We use the notation $U_{i_1i_2\cdots i_r}$ for the intersection of $U_{i_i},\cdots, U_{i_r}$. Let $\{\varphi_{ij}:U_{ij}\to GL_n\}$ and $\{\psi_{ij}:U_{ij}\to GL_W,\hspace{2 mm}t_{ijk}:U_{ijk}\to\mathbb{C}^{\times}\}$ be two choices of descent data for the lift of $g^*(\xi)$ to the $GL_n$-torsor and the $PGL_W$-torsor, respectively. Here $t_{ijk}$ is characterised by
\begin{equation}\label{eq:descent1}
\psi_{ij}(u)\psi_{jk}(u)=t_{ijk}(u)\psi_{ik}(u),\hspace{2 mm} u\in U_{ijk}.
\end{equation}
Furthermore, let $\{\lambda: U_{ij}\to GL_n,\hspace{2 mm}s_{jkl}:U_{jkl}\to\mathbb{C}^{\times}\}$ be a morphism between the two set of descent data above. More precisely, we have
\begin{equation}\label{eq:descent2}
\varphi_{ij}(u)\lambda_{jk}(u)=\lambda_{il}(u),\hspace{2 mm}u\in U_{ijk},
\end{equation}
and
\begin{equation}\label{eq:descent3}
\lambda_{jk}(u)\psi_{kl}(u)=\lambda_{jl}(u)s_{jkl}(u),\hspace{2 mm}u\in U_{jkl}.
\end{equation}
It follows from \eqref{eq:descent2} and \eqref{eq:descent3} that $s_{jkl}$ is independent of $j$, and we will write $s_{kl}$ instead. Therefore we have
\begin{equation}\tag{\ref{eq:descent3}'}\label{eq:descent3'}
\lambda_{jk}(u)\psi_{kl}(u)=\lambda_{jl}(u)s_{kl}(u),\hspace{2 mm}u\in U_{jkl}.
\end{equation}
By \eqref{eq:descent1}, \eqref{eq:descent2} and \eqref{eq:descent3'}, we have
\begin{equation}\label{eq:descent4}
s_{kl}(u)s_{lm}(u)=t_{klm}(u)s_{km}(u), \hspace{2 mm}u\in U_{klm}.
\end{equation}
Define $\psi_{kl}':U_{kl}\rightarrow GL_W$ by $\psi_{kl}'(u)=s_{kl}^{-1}(u)\psi_{kl}(u)$. Then it follows from \eqref{eq:descent1} and \eqref{eq:descent4} that the functions $\{\psi_{kl}'\}$ give a choice of descent data of a $GL_W$-torsor that reduces to the $PGL_W$-torsor $g^*(\xi)$ via the quotient map $GL_W\to PGL_W$. By our assumption for $\alpha$ we have

\begin{equation}\label{g*E}
\alpha(g^*{\xi})=0.
\end{equation}

On the other hand, the canonical projection $PGL_W\rightarrow PGL_{n_i}$ gives rise to an induced $PGL_{n_i}$-torsor $\xi:E_i\rightarrow X$, and the associated Severi-Brauer variety:
\[g_i:\mathbb{P}(E_i)\rightarrow X.\]
Furthermore, $g_i$ factors as follows:
\[\mathbb{P}(E_i)\xrightarrow{h_i}\mathbb{P}(E)\xrightarrow{g}X\]
where $h_i$ is given by the inclusion
\[\mathbb{P}^{n_i-1}\rightarrow \mathbb{P}^{n-1}, [a_0,\cdots,a_{n_i-1}]\mapsto [0,\cdots,a_0,\cdots,a_{n_i-1},\cdots,0]\]
sending the coordinates to the $n_{i-1}+1,\cdots,n_i$th entries. (By convention $n_0=0$.) This fiber-wise inclusion passes to the total spaces since it is $PGL_W$-equivariant. Applying (\ref{g*E}) and Lemma \ref{push-pull}, we have
\[n_i\alpha(\xi)=\chi(\mathbb{P}^{n_i-1})\alpha(\xi)=(g_i)_{\#}g_i^*(\alpha(\xi))=(g_i)_{\#}h_i^*g^*(\alpha(\xi))=0,\]
and we conclude.
\end{proof}


\section{The permutation groups and their double quotients}\label{SecPer}
This section is a technical prerequisite for the construction of the class $\rho_{p,0}\in A^{p+1}_{PGL_n}$, to be presented in Section \ref{SecRho}. Throughout this section, $n$ will be a positive integer and $p$ an odd prime divisor of $n$.

We take $W=(n_1,\cdots,n_r)$ such that $n=\sum_i n_i$, and let $n_0=0$ as in Section \ref{SecBlock}. Then the canonical actions of $S_{n_i}$ on the sets $\{n_{i-1}+1,\cdots,n_i\}$ identifies $S_W=S_{n_1,\cdots,n_r}:=S_{n_1}\times\cdots\times S_{n_r}$ as a subgroup of $S_n$.

\begin{remark}\label{permutationmatrices}
In the context of this paper, particularly this and the next section, it is sometimes helpful to regard the permutation group $S_n$ as the subgroup of $GL_n$ of permutation matrices acting on column vectors $e_i$ that forms the canonical basis of $\mathbb{C}$:
\[e_i=(0,\cdots,0,1,0,\cdots,0)^T\textrm{ (with $i$th entry $1$)}\]
from the left. (Here ``T'' means transpose, of course.) More precisely, If $s\in S_n$, then as a permutation matrix, $s$ satisfies
\begin{equation}\label{permutation matrix}
se_j=e_{s(j)}.
\end{equation}
Yet in other words, the $j$th column of $s$ is $e_{s(j)}$. We will freely let elements in $S_n$ acts either on the vectors $e_i$'s, or numbers $1\leq i\leq n$, without further explanation.

For example, the subgroup $S_{p,n-p}$ of $S_n$ consists of matrices of the form
\begin{equation*}
\begin{bmatrix}
    A &  0  \\
    0 &  B
\end{bmatrix},
\end{equation*}
where $A$ and $B$ are permutation matrices of dimensions $p$ and $n-p$, respectively.
\end{remark}

The following is a routine computation.
\begin{lemma}\label{congruence}
Let $W=(n_1, \cdots, n_r)$ be as above and $n_0=0$ by convention. For $s\in S_n$, let $F_i$ be the set of vectors of the $(n_{i-1}+1)$th to the $n_i$th columns of $s$. Then we have
\begin{equation}\label{congruenceEquation}
sS_Ws^{-1}=\{t\in S_n|t(F_i)=F_i,\textrm{ for all }1\leq i\leq r.\}.
\end{equation}
\end{lemma}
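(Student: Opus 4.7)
The plan is to reduce the statement to an elementary computation about how $s$ permutes the blocks of standard basis vectors. Define $E_i := \{e_{n_{i-1}+1}, \ldots, e_{n_i}\}$, so that $\{E_1, \ldots, E_r\}$ partitions the standard basis. Then $S_W$, viewed as permutation matrices as in Remark \ref{permutationmatrices}, is precisely the stabilizer in $S_n$ of this partition, i.e.\ the set of $w \in S_n$ satisfying $w(E_i) = E_i$ for every $i$. On the other hand, by (\ref{permutation matrix}), the $j$-th column of $s$ is $e_{s(j)}$, so the columns indexed $n_{i-1}+1,\ldots,n_i$ are precisely $s(E_i)$; that is, $F_i = s(E_i)$.

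With this reformulation, both inclusions in (\ref{congruenceEquation}) follow from a one-line set-theoretic manipulation. For the $\subseteq$ direction, any $w \in S_W$ satisfies
\[
(sws^{-1})(F_i) \;=\; sw(E_i) \;=\; s(E_i) \;=\; F_i,
\]
so $sws^{-1}$ stabilizes each $F_i$. For the $\supseteq$ direction, if $t \in S_n$ fixes every $F_i$ setwise, then setting $w := s^{-1}ts$ one computes $w(E_i) = s^{-1}t(F_i) = s^{-1}(F_i) = E_i$, hence $w \in S_W$ and $t = sws^{-1} \in sS_W s^{-1}$.

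The only conceptual care needed is to keep the two roles of $s \in S_n$ straight, namely as a permutation of the index set $\{1,\ldots,n\}$ and as a matrix acting on the column vectors $e_j$, since the statement mixes the two viewpoints (the right-hand side is phrased in terms of columns, the left-hand side in terms of group-theoretic conjugation). Once the dictionary of Remark \ref{permutationmatrices} is fixed, there is no real obstacle and no auxiliary lemmas are required; the two displayed computations above carry the entire proof.
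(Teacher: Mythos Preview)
Your proof is correct. The paper does not actually give a proof of this lemma; it simply prefaces the statement with ``The following is standard group theory,'' and your argument is precisely the standard verification one would supply.
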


We proceed to study double quotients of the form $S_W\backslash S_n/S_{p,n-p}$.

\begin{lemma}\label{double quotient}
\begin{enumerate}
\item
The left quotient set $S_n/S_{p,n-p}$ is in 1-1 correspondence to linear subspaces of dimension $p$ in $\mathbb{C}^n$ spanned by vectors $e_i$, or equivalently, subsets of $\{e_i\}_{i=1}^n$ consisting of $p$ elements.

\item
Let $W=(n_1,\cdots,n_r)$ be as above. The orbits of the canonical action of $S_W$ on the left quotient set $S_n/S_{p,n-p}$ are of the form $\mathfrak{O}_K$ where $K=(k_1,\cdots,k_r)$ is a sequence of non-negative integers summing up to $p$ and satisfies $k_i\leq n_i$. The elements of the orbit $\mathfrak{O}_K$ are subsets of $\{e_j\}_{j=1}^n$ containing exactly $k_i$ elements of the form $e_j$ for $n_{i-1}<j\leq n_i$. Therefore, we have
\[S_W\backslash S_n/S_{p,n-p}=\{\mathfrak{O}_K | K=(k_1,\cdots,k_r), 0\leq k_i\leq n_i, \sum_i k_i=p\}.\]
\end{enumerate}
\end{lemma}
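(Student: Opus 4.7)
The plan is to prove both parts via a single orbit--stabilizer identification for the natural action of $S_n$ on the basis $\{e_1,\dots,e_n\}$ of $\mathbb{C}^n$. First, in view of Remark \ref{permutationmatrices}, the subgroup $S_{p,n-p} \subset S_n$ is exactly the stabilizer of the subset $\{e_1,\dots,e_p\}$ (equivalently, of the $p$-dimensional coordinate subspace it spans), because block-diagonal permutation matrices are precisely those sending $\{e_1,\dots,e_p\}$ to itself and $\{e_{p+1},\dots,e_n\}$ to itself. The map
\[
sS_{p,n-p}\longmapsto \{e_{s(1)},\dots,e_{s(p)}\}
\]
is therefore well-defined and injective, and it is surjective since $S_n$ acts transitively on $p$-element subsets of $\{e_1,\dots,e_n\}$. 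This yields the bijection asserted in (1).

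For (2), I would transport the left $S_W$-action on $S_n/S_{p,n-p}$ through the bijection of (1) to an action on $p$-element subsets of $\{e_1,\dots,e_n\}$. The key point, immediate from (\ref{permutation matrix}), is that this induced action is just $s\cdot\{e_{i_1},\dots,e_{i_p}\}=\{e_{s(i_1)},\dots,e_{s(i_p)}\}$. Each factor $S_{n_i}$ of $S_W=S_{n_1}\times\cdots\times S_{n_r}$ permutes the basis vectors in its block $B_i:=\{e_{n_{i-1}+1},\dots,e_{n_i}\}$ and fixes every $e_j$ outside $B_i$. So for a $p$-element subset $F$, the integers $k_i:=|F\cap B_i|$ are invariant under $S_W$, and they satisfy $k_i\geq 0$ and $\sum_i k_i=|F|=p$. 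Conversely, if $F$ and $F'$ share the same tuple $(k_1,\dots,k_r)$, one chooses inside each block $B_i$ a permutation in $S_{n_i}$ carrying $F\cap B_i$ to $F'\cap B_i$; the product of these permutations lies in $S_W$ and sends $F$ to $F'$. This identifies the $S_W$-orbits with the sets $\mathfrak{O}_K$ of the statement.

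There is no real obstacle: the proof reduces to a routine orbit--stabilizer argument once the matrix description of $S_{p,n-p}$ and $S_W$ from Remark \ref{permutationmatrices} is translated into set-theoretic language. The only subtlety worth mentioning explicitly is that the identification in (1) intertwines left multiplication by $S_W\subseteq S_n$ with the elementwise action on subsets of $\{e_1,\dots,e_n\}$; once that is recorded, both parts of the lemma follow by direct verification.
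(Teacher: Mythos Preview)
Your proof is correct and follows essentially the same approach as the paper: identify $S_{p,n-p}$ as the stabilizer of $\{e_1,\dots,e_p\}$ (the paper phrases this via the Grassmannian analogy, noting that two permutation matrices represent the same coset if and only if their first $p$ columns span the same coordinate subspace), and then read off the $S_W$-orbits from the blockwise action on these $p$-element subsets. Your write-up is in fact more explicit than the paper's, which leaves the orbit identification in part (2) as a one-line observation.
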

\begin{proof}
In view of Remark \ref{permutationmatrices}, the left quotient $S_n/S_{p,n-p}$ is very similar to the construction of Grassmannians in, for example, Chapter 4 of Switzer \cite{Sw}. Indeed, one readily verifies that two permutation matrices $A,B\in S_n$ represents the same coset in $S_n/S_{p,n-p}$ if and only if the first $p$ columns of $A$ and the first $p$ columns of $B$ span the same set of vectors in $\mathbb{C}^n$, and it follows that the left quotient set $S_n/S_{p,n-p}$ is in 1-1 correspondence to sets of vectors of the form $e_i$ having $p$ elements.

For the assertion on the double quotient, simply observe that the canonical left action of $S_W$ on $S_n/S_{p,n-p}$ by permuting the $j$th rows for $n_{i-1}<j\leq n_i$ and $1\leq i\leq r$. More precisely, let $L$ be an $n\times p$ matrix of $p$ column vectors of the form $e_i$, and let $[L]$ be the set of column vectors of $L$. Then we regard $[L]$ as an element of $S_n/S_{p,n-p}$. Let $\sigma\in S_W$. Then $\sigma$ is represented by a $n\times n$ permutation matrix of the form
\begin{equation}\label{eq:diagsigma}
\sigma=\begin{bmatrix}
\sigma_1 &  & \\
  & \ddots & \\
  &    & \sigma_r
\end{bmatrix}
\end{equation}
where $\sigma_k\in S_k$ permutes $e_{n_{i-1}+1},\cdots,e_{n_i}$ by left multiplication, for $1\leq i\leq r$, and $n_0=0$.

Suppose in $[L]$ there are exactly $k_i$ element of the form $e_j$ for $n_{i-1}+1\leq j\leq n_i$. Then the same is true for $[\sigma L]$. Conversely, for $[L],[L']$ both having exactly $k_i$ element of the form $e_j$ for $n_{i-1}+1\leq j\leq n_i$, we may choose $\sigma_i\in S_{k_i}$ permuting $e_{n_{i-1}+1},\cdots,e_{n_i}$ in such a way that we have $\sigma L=L'$, where $\sigma$ is as in \eqref{eq:diagsigma}.
\end{proof}

Let $W=(n_1,\cdots,n_r)$ be as above. In view of Lemma \ref{double quotient}, we identify the double quotient $S_W\backslash S_n/S_{p,n-p}$ with the orbits $\mathfrak{O}_K$. We need the following notation for the next lemma. Suppose $F$ is a subset of $\{1,\cdots,n\}$ and $W$ is as above. Let $m_i$ be the cardinality of the set $F\bigcap \{n_{i-1}+1,\cdots,n_i\}$, and let
\begin{equation}\label{W slash F}
W/F:=(m_1,n_1-m_1,m_2,n_2-m_2,\cdots,m_r,n_r-m_r).
\end{equation}

\begin{lemma}\label{index}
Let $s\in S_n$ of which the first $p$ columns form the set $e(F)=\{e_i|i\in F\}$. Then we have
\begin{equation}\label{Intersection}
sS_{p,n-p}s^{-1}\bigcap S_W=\{t\in S_W| t(e(F))=e(F)\}.
\end{equation}
In particular, we have a group isomorphism
\[sS_{p,n-p}s^{-1}\bigcap S_W\cong S_{W/F}.\]
\end{lemma}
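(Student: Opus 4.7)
The plan is to deduce the lemma by applying Lemma \ref{congruence} to the trivial partition $(p,n-p)$ and then intersecting with $S_W$.

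First, I would apply Lemma \ref{congruence} directly with $W$ replaced by the two-block partition $(p,n-p)$. For $s\in S_n$, if $F$ is the set of vectors given by the first $p$ columns of $s$ and $F^c$ the set given by the last $n-p$ columns, then
\[sS_{p,n-p}s^{-1}=\{t\in S_n\mid t(F)=F\text{ and }t(F^c)=F^c\}.\]
Since $F^c$ is the complement of $F$ inside $\{e_1,\dots,e_n\}$ and any $t\in S_n$ is a bijection, the two set-stabilization conditions are equivalent, so
\[sS_{p,n-p}s^{-1}=\{t\in S_n\mid t(F)=F\}.\]
Intersecting with $S_W$ then immediately yields the first equality \eqref{Intersection}.

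Next, for the isomorphism, I would unpack the block structure. An element $t\in S_W$ is a tuple $(t_1,\dots,t_r)$ with $t_i$ a permutation of $\{n_{i-1}+1,\dots,n_i\}$. For the global action to satisfy $t(F)=F$, each $t_i$ must preserve the intersection $F_i:=F\cap\{n_{i-1}+1,\dots,n_i\}$, a subset of size $m_i$. Conversely, if each $t_i$ stabilizes $F_i$ setwise, then $t$ stabilizes $F=\bigsqcup_i F_i$. Therefore the stabilizer decomposes as a direct product indexed by $i$, and within each block the permutation is free to act arbitrarily on $F_i$ and on its complement $\{n_{i-1}+1,\dots,n_i\}\setminus F_i$, giving a factor $S_{m_i}\times S_{n_i-m_i}$. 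Taking the product over $i$ reproduces exactly the definition \eqref{W slash F} of $W/F$, so the stabilizer is canonically isomorphic to $S_{W/F}$.

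I do not expect any real obstacle: the argument is purely combinatorial bookkeeping, with the only subtlety being the (harmless) observation that stabilizing $F$ automatically stabilizes $F^c$. The statement about $[S_W:sS_{p,n-p}s^{-1}\cap S_W]$ implicit in the isomorphism will be useful in Section \ref{SecRho} when one applies Mackey's formula (Proposition \ref{Mackey's formula}) to transfer classes from $S_{p,n-p}\ltimes T(PGL_n)$ to $S_W\ltimes T(PGL_n)$, since the index governs the coefficients appearing in the double-coset sum.
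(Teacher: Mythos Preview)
Your proposal is correct and follows essentially the same approach as the paper: both deduce \eqref{Intersection} directly from Lemma~\ref{congruence} applied to the partition $(p,n-p)$, and both obtain the isomorphism by observing that an element of $S_W$ stabilizing $F$ must, within each block $\{n_{i-1}+1,\dots,n_i\}$, separately permute $F\cap\{n_{i-1}+1,\dots,n_i\}$ and its complement. Your write-up is in fact slightly more careful than the paper's, making explicit the observation that $t(F)=F$ is equivalent to $t(F^c)=F^c$ and spelling out the converse direction of the block-by-block decomposition.
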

\begin{proof}
Equation (\ref{Intersection}) follows immediately from Lemma \ref{congruence}. For the second statement, it suffices to observe that $sS_{p,n-p}s^{-1}\bigcap S_W$ is the subgroup of $S_n$ which acts separately on the sets $\{e_{n_{i-1}+1},\cdots,e_{n_i}\}\bigcap e(F)$ and $\{e_{n_{i-1}+1},\cdots,e_{n_i}\}-e(F)$, such that on each of these subsets the action is transitive. After perhaps re-ordering $F$, this yields $S_{W/F}$, and we conclude.
\end{proof}



\section{The torsion classes $\rho_{p,k}$ in $A^*_{PGL_n}$}\label{SecRho}
We are finally prepared to construct the promised torsion classes $\rho_{p.k}\in A^*_{PGL_n}$. When $n=p$, the class $\rho_{p,0}$ is simply the $\rho_p$ in Theorem \ref{VistoliPGLp}.

Throughout this section, $p$ will be an odd prime and $n$ a positive integer such that $p|n$.

As in \cite{Vi}, we first take $\Gamma_n=S_n\ltimes T(PGL_n)$ as an avatar of $PGL_n$ and then apply the localization sequence of Chow groups to obtain the desired result for $PGL_n$. Since we consider $\Gamma_n$ as a subgroup of $PGL_n$ in the obvious way, elements in it are represented by $n\times n$ matrices such that in each row and column there is exactly one nonzero entry.

Recall the subgroups $S_W$ of $S_n$ defined in Section \ref{SecPer}. In particular we consider $S_{(p)}$ where we have
\[(p):=(p,p,\cdots,p)\]
the series with $n/p$ copies of $p$.

In the obvious sense we take the subgroup $\Gamma_W:=S_W\ltimes T(PGL_n)$ of $\Gamma_n$. Then elements in $\Gamma_{(p)}$ and $\Gamma_{p,n-p}$ are represented respectively by block matrices of the forms
\begin{equation}\label{(p) blocks}
\begin{bmatrix}
    A_1 & &\\
    & \ddots & \\
    & &     A_{n/p}
\end{bmatrix}
\end{equation}
and
\begin{equation}\label{p,n-p blocks}
    \begin{bmatrix}
    A & 0 \\
    0 & B
    \end{bmatrix}
\end{equation}
with $A$ ($A_i$), $B$ square matrices of dimensions $p$ and $n-p$, respectively. We have the following group homomorphisms
\begin{equation}\label{p,n-p blocks to p}
 \Gamma_p\rightarrow\Gamma_{p,n-p} (\textrm{or }\Gamma_{(p)}), \quad
 A\mapsto
  \begin{bmatrix}
    A &  &  \\
    &\ddots  & \\
    &   &   A
    \end{bmatrix}
\end{equation}
and
\begin{equation}\label{eq:p,n-p to p}
  \Gamma_{p,n-p}\rightarrow\Gamma_p, \quad
  \begin{bmatrix}
    A & 0 \\
    0 & B
    \end{bmatrix}
  \mapsto A,
\end{equation}
and
\[
  \Gamma_{(p)}\rightarrow\Gamma_p, \quad
  \begin{bmatrix}
    A_1 & &\\
     &\ddots &\\
     &  &   A_{n/p}
    \end{bmatrix}
  \mapsto A_1,
\]
where $A$, $B$ are as in (\ref{p,n-p blocks}). One readily observes that the composite of the homomorphisms above gives the identity of $\Gamma_p$, which allows us to define $p$-torsion classes $\rho'_p\in A^{p+1}_{\Gamma_p}$ and $\rho''_p\in A^{p+1}_{\Gamma_{p,n-p}}$ as follows:
\begin{equation}\label{eq:def rho'}
\begin{split}
A^{p+1}_{PGL_p}\rightarrow &A^{p+1}_{\Gamma_p}\leftarrow A^{p+1}_{\Gamma_{p,n-p}},\\
\rho_p\mapsto &\rho'_p\testleft\rho''_p,
\end{split}
\end{equation}
where $\rho_p$ is given in Theorem \ref{VistoliPGLp}, and the second arrow is induced by the homomorphism \eqref{eq:p,n-p to p}.

In particular, let $\rho'_p\in A^{p+1}_{\Gamma_p}$ be the restriction of $\rho_p\in A^{p+1}_{PGL_p}$, then there is a $p$-torsion class, say $\rho''_p$, in $A^{p+1}_{\Gamma_{p,n-p}}$ that restricts to $\rho'_p\in A^{p+1}_{\Gamma_p}$. On the other hand, recall that we have, besides the restriction $\opn{res}^G_H$, the transfer
\[\opn{tr}^H_G: A^*_H\rightarrow A^*_G\]
associated to a monomorphism $H\rightarrow G$ of algebraic groups of finite index.

Moreover, permuting the diagonal blocks of $\Gamma_{(p)}$ yields inner automorphisms of $\Gamma_{(p)}$. Let $s_k\in \Gamma_{(p)}$ be any permutation taking the $k$th diagonal block to the $1$st, by multiplication from the left.
\begin{proposition}\label{p,n-p to n}
Let $n$ and $p$ be as above. Let $u\in A^*_{\Gamma_{p,n-p}}$ be the image of some $p$-torsion class via the restriction from $A^*_{PGL_{p,n-p}}$. Then we have
\[\opn{res}^{\Gamma_n}_{\Gamma_{(p)}}\cdot\opn{tr}^{\Gamma_{p,n-p}}_{\Gamma_n}(u)=\sum_{k=1}^{n/p}\gamma_{s_k}
\cdot\opn{res}^{\Gamma_{p,n-p}}_{\Gamma_{(p)}}(u),\]
where $\gamma_{s_k}$ is the homomorphism induced by the conjugation action of $s_k$.
\end{proposition}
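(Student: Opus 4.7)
The plan is to apply Mackey's formula (Proposition \ref{Mackey's formula}) to $\operatorname{res}^{\Gamma_n}_{\Gamma_{(p)}} \cdot \operatorname{tr}^{\Gamma_{p,n-p}}_{\Gamma_n}(u)$, with $G = \Gamma_n$, $K = \Gamma_{(p)}$ and $H = \Gamma_{p,n-p}$, and to show that exactly $n/p$ of the resulting terms survive. Since $T(PGL_n)$ is normal in $\Gamma_n$ and lies in both $K$ and $H$, the double quotient $\Gamma_{(p)} \backslash \Gamma_n / \Gamma_{p,n-p}$ coincides with $S_{(p)} \backslash S_n / S_{p,n-p}$, which by Lemma \ref{double quotient} is indexed by tuples $(k_1, \ldots, k_{n/p})$ of nonnegative integers summing to $p$. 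A representative $s$ is attached to the subset $F := s(\{1, \ldots, p\})$ with $k_i = |F \cap B_i|$, where $B_i := \{(i-1)p+1, \ldots, ip\}$; by Lemma \ref{index}, $K_s := \Gamma_{(p)} \cap s\Gamma_{p,n-p}s^{-1} = \Gamma_{(p)/F}$ for $(p)/F := (k_1, p-k_1, \ldots, k_{n/p}, p-k_{n/p})$.

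The condition $K_s = K$ amounts to each $k_i \in \{0,p\}$; since $\sum_i k_i = p$, this isolates exactly $n/p$ double cosets. Taking as their representatives involutions $s_k$ in $S_n$ swapping $B_1$ with $B_k$, one has $\operatorname{tr}^{K_{s_k}}_K = \operatorname{id}$, and unwinding the definitions both $\operatorname{res}^{s_k\Gamma_{p,n-p}s_k^{-1}}_{\Gamma_{(p)}} \cdot \gamma_{s_k}$ and $\gamma_{s_k} \cdot \operatorname{res}^{\Gamma_{p,n-p}}_{\Gamma_{(p)}}$ agree with the map induced by the group embedding $\Gamma_{(p)} \hookrightarrow \Gamma_{p,n-p}$, $g \mapsto s_k^{-1} g s_k$. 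These $n/p$ contributions therefore produce the right-hand side of the claim.

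The key step is the vanishing of the remaining terms. For such a representative $s$, some $k_i \in \{1, \ldots, p-1\}$; since $p$ is prime, $\gcd(k_i, p-k_i) = \gcd(k_i, p) = 1$, so $\gcd((p)/F) = 1$. Let $Q \subset PGL_n$ denote the block-diagonal subgroup adapted to the partition $\{F \cap B_i,\; B_i \setminus F\}_i$ of $\{1, \ldots, n\}$; as an abstract algebraic group $Q \cong PGL_{(p)/F}$, and Proposition \ref{BPGL_W} applied to it yields that $A^*_Q$ is torsion-free. Since this partition refines the two-block partition $(F, F^c)$, one has the chain of inclusions $K_s \subset Q \subset sPGL_{p,n-p}s^{-1}$. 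Writing $u = \operatorname{res}^{PGL_{p,n-p}}_{\Gamma_{p,n-p}}(v)$ for the given $p$-torsion $v$ and using naturality of $\gamma_s$ with respect to the inclusion $\Gamma_{p,n-p} \subset PGL_{p,n-p}$, one factors
\[\operatorname{res}^{s\Gamma_{p,n-p}s^{-1}}_{K_s} \gamma_s(u) = \operatorname{res}^{Q}_{K_s} \cdot \operatorname{res}^{sPGL_{p,n-p}s^{-1}}_{Q} \cdot \gamma_s(v).\]
The class $\operatorname{res}^{sPGL_{p,n-p}s^{-1}}_{Q} \gamma_s(v)$ is a $p$-torsion in the torsion-free ring $A^*_Q$, hence zero, so the whole Mackey term vanishes.

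The main obstacle is recognizing the intermediate subgroup $Q$ squeezed between $K_s$ and $sPGL_{p,n-p}s^{-1}$; once this refinement of partitions is identified, Proposition \ref{BPGL_W} combined with the observation that $\gcd((p)/F) = 1$ as soon as some $k_i$ is neither $0$ nor $p$ eliminates all unwanted Mackey contributions, leaving only the $n/p$ diagonal terms.
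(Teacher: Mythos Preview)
Your proof is correct and follows essentially the same route as the paper: apply Mackey's formula with $G=\Gamma_n$, $H=\Gamma_{p,n-p}$, $K=\Gamma_{(p)}$, parametrize the double cosets by the tuples of Lemma \ref{double quotient}, and kill every term with some $k_i\notin\{0,p\}$ by factoring the restriction through $A^*_{PGL_{(p)/F}}$ and invoking Proposition \ref{BPGL_W}. Your observation that $\gcd((p)/F)=1$ in those cases is slightly sharper than the paper's ``no $p$-torsion'', but the mechanism is identical, and your handling of the surviving $n/p$ terms (identifying $\operatorname{res}^{s_kHs_k^{-1}}_{K}\cdot\gamma_{s_k}$ with $\gamma_{s_k}\cdot\operatorname{res}^{H}_{K}$ via the common underlying homomorphism) is, if anything, more explicit than the paper's ``inclusion commutes with conjugation''.
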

\begin{proof}
Throughout this proof we identify the double quotients
\[\Gamma_{(p)}\backslash\Gamma_n/\Gamma_{p,n-p}\cong S_{(p)}\backslash S_n/S_{p,n-p}.\]
Let $u\in A^*_{\Gamma_{p,n-p}}$ be the image of a $p$-torsion class via the restriction from $A^*_{PGL_{p,n-p}}$. We apply Mackey's formula (Proposition \ref{Mackey's formula}) to the class $u$ with $G=\Gamma_n$, $H=\Gamma_{p,n-p}$ and $K=\Gamma_{(p)}$ to obtain
\begin{equation}\label{applyMackey}
\opn{res}^{\Gamma_n}_{\Gamma_{(p)}}\cdot\opn{tr}^{\Gamma_{p,n-p}}_{\Gamma_n}(u)=
\sum_{K=(k_1,\cdots,k_{n/p})}\opn{tr}^{\Gamma_{p,n-p}^K}_{\Gamma_{(p)}}
\cdot\opn{res}^{s_K\Gamma_{p,n-p}s_K^{-1}}_{\Gamma_{p,n-p}^K}\cdot\gamma_{s_K}(u).
\end{equation}
In this formula $K$ runs over sequences of nonnegative integers $(k_1,\cdots,k_{n/p})$ summing up to $p$ (this uses Lemma \ref{double quotient}, (2)), and $\Gamma_{p,n-p}^K$ denotes
\[\Gamma_{p,n-p}^K:=s_K\Gamma_{p,n-p}s_K^{-1}\bigcap \Gamma_{(p)},\]
where $s_K$ is a representative of the double coset indexed by $K$.

The choice of $s_K$ is not essential. Nonetheless we make a normalized choice as follows. For the sequence $K=(k_1,\cdots,k_{n/p})$, consider the set
\[\abs{K}:=\{j|(i-1)\frac{n}{p}<j\leq (i-1)\frac{n}{p}+k_i,\ i=1,2,\cdots,\frac{n}{p}\}.\]
We assert that for $1\leq l \leq p$, the $l$th column of $s_K$ is the $l$th element of
\[\{e_j|j\in\abs{K}\}\]
with the ascending order in $j$. 
By Lemma \ref{index}, we have
\begin{equation}\label{(p) slash K}
\Gamma_{p,n-p}^K=\Gamma_{(p)/\abs{K}},
\end{equation}
where the notation $(p)/\abs{K}$ is defined in (\ref{W slash F}).

We proceed to consider the class
\begin{equation}\label{vanishing class}
\opn{tr}^{\Gamma_{p,n-p}^K}_{\Gamma_{(p)}}\cdot\opn{res}^{s_K\Gamma_{p,n-p}s_K^{-1}}_{\Gamma_{p,n-p}^K}\gamma_{s_K}(u)
\end{equation}
for each $K$. Suppose $K$ is not of the form $(0,\cdots,p,\cdots,0)$, then the sequence $(p)/\abs{K}$ contains some positive entry less then $p$. It then follows from Proposition \ref{BPGL_W} that the ring $A^*_{PGL_{(p)/\abs{K}}}$ contains no nontrivial $p$-torsion class. However, the restriction from $A^*_{PGL_{p,n-p}}$ to $A^*_{\Gamma_{(p)/\abs{K}}}$ factors through $A^*_{PGL_{(p)/\abs{K}}}$, and by (\ref{(p) slash K}) the class in (\ref{vanishing class}) vanishes. Therefore we have
\begin{equation}\label{applyMackey'}
\opn{res}^{\Gamma_n}_{\Gamma_{(p)}}\cdot\opn{tr}^{\Gamma_{p,n-p}}_{\Gamma_n}(u)=
\sum_{K=(0,\cdots p,\cdots 0)}\opn{tr}^{\Gamma_{(p)/\abs{K}}}_{\Gamma_{(p)}}\cdot
\opn{res}^{s_K\Gamma_{p,n-p}s_K^{-1}}_{\Gamma_{(p)/\abs{K}}}\gamma_{s_K}(u).
\end{equation}
When $K=(0,\cdots p,\cdots 0)$, one observes that $(p)/\abs{K}$ is $(p)$ with some $0$'s inserted in it. Therefore we have $\Gamma_{(p)/\abs{K}}=\Gamma_{(p)}$
and (\ref{applyMackey'}) further reduces to
\begin{equation*}
\opn{res}^{\Gamma_n}_{\Gamma_{(p)}}\cdot\opn{tr}^{\Gamma_{p,n-p}}_{\Gamma_n}(u)=
\sum_{K=(0,\cdots p,\cdots 0)}\opn{res}^{s_K\Gamma_{p,n-p}s_K^{-1}}_{\Gamma_{(p)/\abs{K}}}\gamma_{s_K}(u).
\end{equation*}
For $K=(0,\cdots p,\cdots 0)$ of which the $k$th entry is $p$, we may take $s_k=s_K$. Since the inclusion commutes with conjugations, we conclude.
\end{proof}

\begin{corollary}\label{existence of rho'}
If $p^2\nmid n$, then there is a $p$-torsion class $\rho'_{p,0}\in A^{p+1}_{\Gamma_n}$ that restricts to $\rho'_p\in A^{p+1}_{\Gamma_p}$ via the diagonal inclusion.
\end{corollary}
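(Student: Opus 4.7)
The plan is to realize $\rho'_{p,0}$, up to a unit scalar, as the transfer $\operatorname{tr}^{\Gamma_{p,n-p}}_{\Gamma_n}(\rho''_p)$, using that restriction along the diagonal $\Gamma_p\hookrightarrow\Gamma_n$ factors through $\Gamma_{(p)}$ so that Proposition \ref{p,n-p to n} can be applied, and then extracting the coefficient via Remark \ref{choice of rho} (or directly, since we are working with $p$-torsion and $n/p$ is a $p$-adic unit).

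First I would construct $\rho''_p$ in such a way that it actually satisfies the hypothesis of Proposition \ref{p,n-p to n}, namely that it comes by restriction from $A^*_{PGL_{p,n-p}}$. Since $p\mid n$ we also have $p\mid(n-p)$, so the assignment $A\mapsto\operatorname{diag}(A,A,\dots,A)$ with $n/p$ equal blocks descends to a well-defined homomorphism $PGL_p\to PGL_{p,n-p}$, split by the projection to the first $p\times p$ block. Pulling $\rho_p$ back along this split projection produces a $p$-torsion class $\tilde\rho_p\in A^{p+1}_{PGL_{p,n-p}}$ restricting to $\rho_p$; restricting further to $\Gamma_{p,n-p}$ yields the promised $\rho''_p$, which is still $p$-torsion and, by a commuting square of inclusions, restricts to $\rho'_p$ along $\Gamma_p\hookrightarrow\Gamma_{p,n-p}$.

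Next I would set $\tilde\rho:=\operatorname{tr}^{\Gamma_{p,n-p}}_{\Gamma_n}(\rho''_p)\in A^{p+1}_{\Gamma_n}$, which is $p$-torsion because $\rho''_p$ is. To compute $\operatorname{res}^{\Gamma_n}_{\Gamma_p}(\tilde\rho)$ I would factor the diagonal $\Gamma_p\hookrightarrow\Gamma_n$ through $\Gamma_{(p)}$ and apply Proposition \ref{p,n-p to n}:
\[\operatorname{res}^{\Gamma_n}_{\Gamma_{(p)}}(\tilde\rho)=\sum_{k=1}^{n/p}\gamma_{s_k}\operatorname{res}^{\Gamma_{p,n-p}}_{\Gamma_{(p)}}(\rho''_p).\]
The crux is then the observation that each $\gamma_{s_k}$ becomes trivial after a further restriction to the diagonal $\Gamma_p$: the image of $A\in\Gamma_p$ in $\Gamma_{(p)}$ is $\operatorname{diag}(A,\dots,A)$, which is manifestly fixed by conjugation by any permutation of diagonal blocks. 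Combined with $\operatorname{res}^{\Gamma_{p,n-p}}_{\Gamma_p}(\rho''_p)=\rho'_p$, every summand therefore restricts to $\rho'_p$, giving $\operatorname{res}^{\Gamma_n}_{\Gamma_p}(\tilde\rho)=(n/p)\rho'_p$.

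Finally, since $p^2\nmid n$, the integer $n/p$ is coprime to $p$, so choosing $m\in\mathbb{Z}$ with $m(n/p)\equiv 1\pmod p$ and using that $\rho'_p$ is $p$-torsion, the class $\rho'_{p,0}:=m\tilde\rho$ is $p$-torsion and satisfies $\operatorname{res}^{\Gamma_n}_{\Gamma_p}(\rho'_{p,0})=m(n/p)\rho'_p=\rho'_p$, as required. The main technical point is really the Mackey bookkeeping in the middle step, but that work has already been done in Proposition \ref{p,n-p to n}; all that is genuinely new here is the triviality of $\gamma_{s_k}$ on the diagonal, which is essentially immediate, and so I expect no substantial obstacle beyond carefully aligning the various group embeddings.
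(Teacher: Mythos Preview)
Your proposal is correct and follows essentially the same route as the paper: define $\rho''_p$ as the pullback of $\rho_p$ along the projection $PGL_{p,n-p}\to PGL_p$ restricted to $\Gamma_{p,n-p}$, apply Proposition~\ref{p,n-p to n} to $\operatorname{tr}^{\Gamma_{p,n-p}}_{\Gamma_n}(\rho''_p)$, use that block permutations fix the diagonal copy of $\Gamma_p$ to kill the $\gamma_{s_k}$'s, and then invert $n/p$ modulo $p$. Your write-up is in fact slightly more explicit than the paper's about why $\rho''_p$ descends from $A^*_{PGL_{p,n-p}}$, but the argument is the same.
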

\begin{proof}
Recall that in \eqref{eq:def rho'} we define classes $\rho_p'\in A^{p+1}_{\Gamma_p}$ and $\rho_p''\in A^{p+1}_{\Gamma_{p,n-p}}$. By definition $\rho_p'$ is the restriction of some $p$-torsion class in $A^*_{PGL_p}$. Taking the restriction along (\ref{p,n-p blocks to p}) shows that $\rho_p''$ is the restriction of some $p$-torsion class in $A^*_{PGL_{p,n-p}}$. It follows from Proposition \ref{p,n-p to n} that we have
\[\opn{res}^{\Gamma_{(p)}}_{\Gamma_p}\cdot\opn{res}^{\Gamma_n}_{\Gamma_{(p)}}\cdot
\opn{tr}^{\Gamma_{p,n-p}}_{\Gamma_n}(\rho_p'')=
\sum_{k=1}^{n/p}\opn{res}^{\Gamma_{(p)}}_{\Gamma_p}\cdot\gamma_{s_k}\cdot
\opn{res}^{\Gamma_{p,n-p}}_{\Gamma_{(p)}}(\rho_p'').\]
Since permutation of the diagonal blocks acts trivially on the image of the diagonal inclusion from $\Gamma_p$ to $\Gamma_{(p)}$, we may ignore the $\gamma_{s_k}$'s and the above reduces to
\[\opn{res}^{\Gamma_n}_{\Gamma_p}\cdot\opn{tr}^{\Gamma_{p,n-p}}_{\Gamma_n}(\rho_p'')=
\sum_{k=1}^{n/p}\opn{res}^{\Gamma_{p,n-p}}_{\Gamma_p}(u)=\frac{n}{p}\opn{res}^{\Gamma_{p,n-p}}_{\Gamma_p}(\rho_p'')
=\frac{n}{p}\rho_p'.\]
Since we have $p^2\nmid n$ and the class $\rho_p''$ is $p$-torsion, we may take
\[\rho_{p,0}':=(\frac{n}{p})^{-1}\opn{tr}^{\Gamma_{p,n-p}}_{\Gamma_n}(\rho_p'')\]
and it is as desired.
\end{proof}
In particular, Corollary \ref{existence of rho'} indicates that the composition of inclusions
\[C_p\times\mu_p\hookrightarrow\Gamma_p\xrightarrow{\Delta}\Gamma_n\]
induces the restriction
\begin{equation}\label{rho'}
A^*_{\Gamma_n}\rightarrow A^*_{C_p\times\mu_p}, \quad \rho_{p,0}'\mapsto r=\xi\eta(\xi^{p-1}-\eta^{p-1}).
\end{equation}
We proceed to construct the promised classes $\rho_{p,k}\in A^{p+1}_{PGL_n}$ for $k\geq 0$. Recall that we have the diagonal map $\Delta: PGL_p\rightarrow PGL_n$ and the cycle class map $\opn{cl}: A^*_G\rightarrow H^{2*}_G$ for any algebraic group $G$ over $\mathbb{C}$.
\begin{lemma}\label{construct rho0}
When $p^2\nmid n$, there is a $p$-torsion class $\rho_{p,0}\in A^{p+1}_{PGL_n}$ satisfying $\Delta^*(\rho_{p,0})=\rho_p$ and $\opn{cl}(\rho_{p,0})=y_{p,0}$.
\end{lemma}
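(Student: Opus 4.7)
The plan is to construct $\rho_{p,0}$ by lifting the class $\rho'_{p,0}\in A^{p+1}_{\Gamma_n}$ supplied by Corollary \ref{existence of rho'} up to $A^{p+1}_{PGL_n}$, using the Vezzosi stratification of Section \ref{SecVistoli}. Concretely, I would exploit the commutative square
\[
\begin{tikzcd}
A^*_{PGL_n} \arrow[r, "\operatorname{res}"] \arrow[d] & A^*_{\Gamma_n} \arrow[d] \\
A^*_{PGL_n}(sl_n^*) \arrow[r, "\cong"] & A^*_{\Gamma_n}(D_n^*)
\end{tikzcd}
\]
whose top edge is injective by Totaro (Theorem \ref{TotaroInjection}), whose bottom edge is the Vezzosi isomorphism of Proposition \ref{Vezzosi}, and whose verticals are the surjective restrictions furnished by Proposition \ref{Localization Sequence}. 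Writing $\bar{\rho}$ for the image of $\rho'_{p,0}$ in $A^*_{\Gamma_n}(D_n^*)$, I transport it through the Vezzosi iso to a class in $A^*_{PGL_n}(sl_n^*)$ and lift via the left vertical surjection to a candidate $\rho_{p,0}\in A^{p+1}_{PGL_n}$.

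I would then verify the three asserted properties in turn. For $p\rho_{p,0}=0$, Proposition \ref{Vezzosi n torsion} says that every torsion class in $A^*_{PGL_n}$ is $n$-torsion; combined with $p^2\nmid n$ this forces any $p$-primary class to be $p$-torsion, and adjusting the lift by a suitable element of the kernel of the left vertical yields $p\rho_{p,0}=0$ exactly. For $\operatorname{cl}(\rho_{p,0})=y_{p,0}$, Proposition \ref{p-torsion lowest} gives that $y_{p,0}$ generates the unique nonzero $p$-torsion subgroup of $H^{2(p+1)}_{PGL_n}$, so $\operatorname{cl}(\rho_{p,0})=\lambda y_{p,0}$ for some $\lambda\in\mathbb{Z}/p$; restricting further along $C_p\times\mu_p\hookrightarrow PGL_p\xrightarrow{\Delta}PGL_n$, Corollary \ref{restriction to p} together with (\ref{rho'}) identifies the image of $\rho_{p,0}$ in $A^*_{C_p\times\mu_p}$ with $r$ modulo $c_{n-1}(D_n)$, which by a codimension check is an equality in degree $p+1$ whenever $n\geq 2p$; hence $\operatorname{cl}(\rho_{p,0})$ restricts to the nonzero class $r_0$ of Proposition \ref{H(Cpmup)}, forcing $\lambda\neq 0$, and a rescaling consistent with the freedom of Remark \ref{choice of rho} normalizes $\lambda=1$. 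For $\Delta^*(\rho_{p,0})=\rho_p$, Totaro's injectivity $A^*_{PGL_p}\hookrightarrow A^*_{\Gamma_p}$ reduces the check to $A^*_{\Gamma_p}$, where by commutativity of restrictions along $\Gamma_p\to\Gamma_n$ both $\Delta^*(\rho_{p,0})$ and $\rho_p$ restrict to $\rho'_p$, after using the uniqueness of $p$-torsion classes of degree $p+1$ in $A^*_{\Gamma_p}$ implied by Theorem \ref{VistoliPGLp}.

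The principal obstacle will be the careful management of the kernel of $A^*_{\Gamma_n}\to A^*_{\Gamma_n}(D_n^*)$ in degree $p+1$, since this ambiguity threatens each of the three verifications above. I plan to control it by combining a codimension analysis of the complement $D_n\setminus D_n^*$---placing the pushforward contributions in sufficiently high degree when $p+1$ is small relative to $n$---with the collapse provided by Corollary \ref{restriction to p} after further restriction to $C_p\times\mu_p$. The unique characterizations of $y_{p,0}$ in the relevant degree of $H^*_{PGL_n}$ (Proposition \ref{p-torsion lowest}) and of $\rho_p$ in degree $p+1$ of $A^*_{PGL_p}$ (Theorem \ref{VistoliPGLp}) then pin down $\rho_{p,0}$ up to a scalar absorbed by the choice of primitive $p$th root of unity $\omega$.
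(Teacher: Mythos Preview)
Your strategy is essentially the paper's: lift $\rho'_{p,0}$ through the Vezzosi isomorphism $A^*_{PGL_n}(sl_n^*)\cong A^*_{\Gamma_n}(D_n^*)$, then pin down the three properties by restricting to $C_p\times\mu_p$ and invoking Proposition~\ref{p-torsion lowest}. The paper organizes this via a three-column diagram (adding the $C_p\times\mu_p$ column from the start), but the content is the same.

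Two points where your sketch wobbles:

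\emph{The codimension claim.} The complement $D_n\setminus D_n^*$ is the discriminant locus (two diagonal entries equal), hence has codimension~$1$ in $D_n$; likewise $sl_n\setminus sl_n^*$ is a hypersurface. So the pushforward contributions do \emph{not} live in high degree, and your ``placing the pushforward contributions in sufficiently high degree'' plan fails on its own. Fortunately you do not need it: as you also note, Corollary~\ref{restriction to p} identifies $A^*_{C_p\times\mu_p}(D_n\setminus\{0\})\to A^*_{C_p\times\mu_p}(D_n^*)$, and the kernel on the $C_p\times\mu_p$ side is generated by $c_{n-1}(D_n)$ of degree $n-1>p+1$ (since $n\geq 2p$ once $n>p$ and $p\mid n$). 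This alone controls the ambiguity where it matters; the paper uses exactly this, nothing more.

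\emph{The $p$-torsion step.} Your phrase ``adjusting the lift by a suitable element of the kernel'' is not quite an argument: knowing only that $p\rho_{p,0}$ lies in the kernel does not produce a $\kappa$ in the kernel with $p\kappa=p\rho_{p,0}$. The paper instead first shows $\rho_{p,0}$ is torsion --- because $\Delta^*(\rho_{p,0})=\rho_p$ is torsion and the diagonal map on maximal tori makes $\Delta^*$ injective modulo torsion (Proposition~\ref{Maximal torus}/Corollary~\ref{torsionfree}) --- and then, knowing it is $n$-torsion by Proposition~\ref{Vezzosi n torsion}, multiplies by an integer inverse to $n/p$ modulo~$p$. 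That is the clean route; I would replace your kernel-adjustment with it.
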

\begin{proof}
For $n=p$ there is nothing to show. We therefore assume $n>p$ for the rest of the proof. We will necessarily consider both $y_{p,0}\in H^*_{PGL_p}$ and $y_{p,0}\in H^*_{PGL_n}$. To avoid ambiguity we denote them by $y_{p,0}(p)$ and $y_{p,0}(n)$, respectively.

Consider the following commutative diagram
\begin{equation}\label{diag1}
\begin{tikzcd}
A^*_{PGL_n}\arrow[d, two heads, "f^*"]\arrow[r]& A^*_{\Gamma_n}\arrow[r]\arrow[d, two heads, "g^*"]& A^*_{C_p\times\mu_p}\arrow[d, two heads]\\
A^*_{PGL_n}(sl_n^*)\arrow[r, "\varphi"]&A^*_{\Gamma_n}(D_n^*)\arrow[r]& A^*_{C_p\times\mu_p}(D_n^*)
\end{tikzcd}
\end{equation}
in which all arrows are the obvious restrictions. The vertical arrows are surjective, due to the localization sequences, and $\varphi$ is an isomorphism, due to Proposition \ref{Vezzosi}. Therefore, we have a class $\wt{\rho}_{p,0}\in A^{p+1}_{PGL_n}$ satisfying
\begin{equation}\label{rhop1}
\varphi f^*(\wt{\rho}_{p,0})=g^*(\rho'_{p,0}).
\end{equation}

It follows from Corollary \ref{restriction to p} that the vertical arrow of \eqref{diag1} on the right hand side is an isomorphism in degree $p+1$, and a diagram chasing therefore shows that $\wt{\rho}_{p,0}$ restricts to $r=\xi\eta(\xi^{p-1}-\eta^{p-1})$. On the other hand, in the following commutative diagram
\begin{equation*}
\begin{tikzcd}
A^*_{PGL_n}\arrow[dr]\arrow[rr,"\Delta^*"]& & A^*_{PGL_p}\arrow[dl]\\
& A^*_{C_p\times\mu_p}
\end{tikzcd}
\end{equation*}
there is a unique torsion class $\rho_p\in A^*_{PGL_p}$ sent to $r$, a consequence of Theorem \ref{TotaroInjection}, Proposition \ref{VistoliInjection} and Proposition \ref{Vistoli(p-1)!}. It then follows that we have
\begin{equation}\label{eq:wtrho 1}
\Delta^*(\wt{\rho}_{p,0})=\rho_p.
\end{equation}


In fact, we would have been able to define the class $\rho_{p,0}$ to be $\wt{\rho}_{p,0}$, if $\wt{\rho}_{p,0}$ were known to be a torsion class. Roughly speaking, the torsion class $\rho_{p,0}$ is to be constructed by ``annihilating the non-torsion part of $\wt{\rho}_{p,0}$''. In what follows we add the subscripts ``$A$'' and ``$H$'' to the conventional notations to indicate whether they are meant for Chow rings or singular cohomology.

Recall the notations $A^*_G[p]$ and $H^*_G[p]$ for localization at $p$. Let $\lambda: T(PGL_n)\rightarrow PGL_n$ be the inclusion of the chosen maximal torus. Then we have the induced restrictions
\[\lambda^*_A: A^*_{PGL_n}[p]\rightarrow (A^*_{T(PGL_n)}[p])^{S_n}\]
and
\[\lambda^*_H: H^*_{PGL_n}[p]\rightarrow (H^*_{T(PGL_n)}[p])^{S_n}.\]
It follows from Corollary \ref{cor:H^2p+2 direct sum 2} that the latter is surjective in degree $2(p+1)$. As for the former, we denote the image of $\lambda^*_A$ by $A^*_I[p]$, a subring of $(A^*_{T(PGL_n)}[p])^{S_n}$. Consider the diagram
\begin{equation}\label{dia:classmapdiagram}
\begin{tikzcd}
A^{p+1}_{C_p\times\mu_p}\arrow[d,"\opn{cl}_1","\cong"']&A^{p+1}_{PGL_n}[p]\arrow[l,"\theta^*_A"]\arrow[r,two heads,"\lambda^*_A"]\arrow[d,"\opn{cl}_0"]& A^{p+1}_I[p]\arrow[d,hook',"\opn{cl}_2"]\arrow[l,bend right,dashed,"\phi_A"']
\\
H^{2(p+1)}_{C_p\times\mu_p}&H^{2(p+1)}_{PGL_n}[p]\arrow[r,two heads,"\lambda^*_H"]\arrow[l,"\theta^*_H"]& (H^{2(p+1)}_{T(PGL_p)}[p])^{S_n}\arrow[l,bend left,"\phi_H"]
\end{tikzcd}
\end{equation}
which is commutative apart from the bent arrows. where the vertical arrows are the cycle class maps and the horizontal ones are the restrictions. The arrow $\lambda^*_A$ is surjective by construction, and $\lambda_H^*$ is a split epimorphism with a right inverse $\phi_H$ satisfying $\theta^*_H\phi_H=0$, as shown in Corollary \ref{cor:H^2p+2 direct sum 2}. The map $\opn{cl}_2$ is injective by Corollary \ref{cor:cl T(PGLn)}. The map $\opn{cl}_1$ is an isomorphism, by Proposition \ref{A(Cpmup)} and Proposition \ref{H(Cpmup)}.

Since $A^{p+1}_I[p]$ is a free $\Z_{(p)}$-module, we have a homomorphism $\phi_A$ of $\Z_{(p)}$-modules as indicated by the dashed arrow in \eqref{dia:classmapdiagram}, which is a lift of $\phi_H\opn{cl}_2$ along $\opn{cl}_0$. In other words, we have $\opn{cl}_0\phi_A=\phi_H\opn{cl}_2$. It follows that $\lambda_A^*$ is a split epimorphism with right inverse $\phi_A$. To show this, notice that we have
\[\opn{cl}_2\lambda^*_A\phi_A=\lambda^*_H\opn{cl}_0\phi_A=\lambda^*_H\phi_H\opn{cl}_2=\opn{cl}_2,\]
which yields $\lambda^*_A\phi_A=\opn{id}$, since $\opn{cl}_2$ is injective.

Define $\rho_{p,0}:=\wt{\rho}_{p,0}-\phi_A\lambda^*_A(\wt{\rho}_{p,0})$. Therefore
\[\lambda^*_A(\rho_{p,0})=\lambda^*_A(\wt{\rho}_{p,0}-\phi_A\lambda^*_A(\wt{\rho}_{p,0}))
=\lambda^*_A(\wt{\rho}_{p,0})-\lambda^*_A(\wt{\rho}_{p,0})=0.\]
Since
\[\lambda^*_A: A^*_{PGL_n}\otimes\mathbb{Q}\rightarrow (A^*_{T(PGL_n)})^{S_n}\otimes\mathbb{Q}\]
is a ring isomorphism (Corollary \ref{cor:T(PGL_n)Q}), it follows that $\rho_{p,0}$ is a torsion class. Furthermore, by Proposition \ref{Vezzosi n torsion}, the class $\rho_{p,0}$ is $n$-torsion. Since we have $p^2\nmid n$ and we are considering the $p$-local case, the class $\rho_{p,0}$ is $p$-torsion.

Since we have $\theta^*_H\phi_H=0$ and $\opn{cl}_1$ is an isomorphism, we obtain $\theta^*_A\phi_A=0$. Now it follows from \eqref{eq:wtrho 1} that we have
\begin{equation*}
\begin{split}
&\opn{res}^{PGL_p}_{C_p\times\mu_p}\Delta^*(\rho_{p,0})=\theta^*_A(\rho_{p,0})\\
=&\theta^*_A(\wt{\rho}_{p,0}-\phi_A\lambda^*_A(\wt{\rho}_{p,0}))
=\theta^*_A(\wt{\rho}_{p,0})=r=\opn{res}^{PGL_p}_{C_p\times\mu_p}(\rho_p).
\end{split}
\end{equation*}
Since $\Delta^*(\rho_{p,0})$ is a torsion class, and by Proposition \ref{pro:Vistoli inj}, $\rho_p$ is the only torsion class in $A^*_{PGL_p}$ restricting to $r$, we have $\Delta^*(\rho_{p,0})=\rho_p$.

On the other hand, we have
\[\theta^*_H\opn{cl}_0(\rho_{p,0})=\opn{cl}_1\theta^*_A(\wt{\rho}_{p,0}-\phi_A\lambda^*_A(\wt{\rho}_{p,0}))
=\opn{cl}_1\theta^*_A(\wt{\rho}_{p,0})=r.\]
Therefore we have $\opn{cl}_0(\rho_{p,0})\in(\theta^*_H)^{-1}(r)$. However, by Corollary \ref{cor:H^2p+2 direct sum 2}, we have the isomorphism
\begin{equation*}
\begin{split}
H^{2(p+1)}_{PGL_n}[p]&\cong (H^{2(p+1)}_{T(PGL_n)}[p])^{S_n}\oplus (H^{2(p+1)}_{C_p\times\mu_p})^{SL_2(\Z/p)}\\
&=(H^{2(p+1)}_{T(PGL_n)}[p])^{S_n}\oplus (r),
\end{split}
\end{equation*}
which interprets the restriction $\theta^{2(p+1)}:H^{2(p+1)}_{PGL_n}[p]\rightarrow H^{2(p+1)}_{C_p\times\mu_p}$ as the projection onto the second summand. Therefore there is only one torsion class in $(\theta^*_H)^{-1}(r)$, which is $y_{p,0}(n)$. Hence we have
$\opn{cl}(\rho_{p,0})=\opn{cl}_0(\rho_{p,0})=y_{p,0}(n)$.


\end{proof}

What remains to complete the proof of Theorem \ref{thm2p^k+2} is the construction of the $p$-torsion classes $\rho_{p,k}$ for $k>0$, which is presented in the following
\begin{lemma}\label{rhopk}
For $p^2\nmid n$ and $k\geq 0$, there are $p$-torsion classes $\rho_{p,k}\in A^{p^{k+1}+1}_{PGL_n}$ satisfying $\opn{cl}(\rho_{p,k})=y_{p,k}$.
\end{lemma}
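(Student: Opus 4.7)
The plan is to argue by induction on $k$, with the base case $k=0$ established by Lemma~\ref{construct rho0}. As a strengthening of the inductive hypothesis, I would carry along the additional property that $\rho_{p,k-1}$ restricts to $r_{k-1}$ in $A^*_{C_p\times\mu_p}$ along the composite $C_p\times\mu_p\hookrightarrow PGL_p\xrightarrow{\Delta} PGL_n$; this is ensured for $k-1=0$ by Lemma~\ref{construct rho0}. Given $\rho_{p,k-1}$, apply Brosnan's Steenrod power operation to its mod $p$ reduction to obtain
\[
\sigma_k := \mathscr{S}^{p^k}(\bar{\rho}_{p,k-1})\in A^{p^{k+1}+1}_{PGL_n}\otimes\mathbb{Z}/p.
\]
By naturality of $\mathscr{S}^{p^k}$ under restriction, together with the inductive hypothesis and Corollary~\ref{Steenrodpowers}, $\sigma_k$ restricts to $\mathscr{S}^{p^k}(\bar{r}_{k-1})=\bar{r}_k$ in $A^*_{C_p\times\mu_p}\otimes\mathbb{Z}/p$.

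The hard part will be upgrading $\sigma_k$ to an integral $p$-torsion lift $\rho_{p,k}\in A^{p^{k+1}+1}_{PGL_n}$ that still restricts to $r_k$ in $A^*_{C_p\times\mu_p}$. I would reproduce the machinery of Lemma~\ref{construct rho0} in this higher degree: produce an auxiliary $p$-torsion class $\rho'_{p,k}\in A^{p^{k+1}+1}_{\Gamma_n}$ via a transfer argument parallel to Corollary~\ref{existence of rho'}, starting from an intermediate $p$-torsion class $\rho''_{p,k}\in A^{p^{k+1}+1}_{\Gamma_{p,n-p}}$ whose restriction along $\Gamma_p\hookrightarrow\Gamma_{p,n-p}$ is an integral lift of $\mathscr{S}^{p^k}(\bar{\rho}'_{p,k-1})$. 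The key input securing $p$-torsion at this intermediate stage is Proposition~\ref{BPGL_W} applied to $W=(p,n-p)$: since $p\mid n$, we have $\gcd(W)=p$, so every torsion class in $A^*_{PGL_{p,n-p}}$ is automatically $p$-torsion. Then setting
\[
\rho'_{p,k} := (n/p)^{-1}\operatorname{tr}^{\Gamma_{p,n-p}}_{\Gamma_n}(\rho''_{p,k})
\]
(sensible because $p^2\nmid n$) and defining $\rho_{p,k}$ by $\varphi^* f^*(\rho_{p,k})=g^*(\rho'_{p,k})$ in diagram~(\ref{diag1}), Corollary~\ref{restriction to p} forces $\rho_{p,k}$ to restrict to $r_k$ in $A^*_{C_p\times\mu_p}$, thereby preserving the inductive hypothesis. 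Proposition~\ref{Vezzosi n torsion} combined with $p^2\nmid n$ then permits rescaling to make $\rho_{p,k}$ exactly $p$-torsion.

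The cycle-class identification is the easier half. The commutative square of Proposition~\ref{Steenrodcycleclass} combined with Proposition~\ref{SteenrodOperation2} yields $\overline{\operatorname{cl}(\rho_{p,k})}=\mathscr{P}^{p^k}(\bar{y}_{p,k-1})=\bar{y}_{p,k}$. A direct restriction comparison at $C_p\times\mu_p$, using Proposition~\ref{H(Cpmup)} (both $\operatorname{cl}(\rho_{p,k})$ and $y_{p,k}$ pull back to the class $r_k\in H^*_{C_p\times\mu_p}$), then pins down $\operatorname{cl}(\rho_{p,k})=y_{p,k}$ up to a scalar in $(\mathbb{Z}/p)^{\times}$; this scalar can be absorbed into the choice of $\rho_{p,k}$, in the same spirit as the remark following Proposition~\ref{SteenrodOperation1}. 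The principal obstacle throughout is the $p$-torsion lifting in paragraph two, where the interplay of the transfer formula, Vezzosi's isomorphism $\varphi$, and the cardinality constraints $\gcd(p,n-p)=p$ and $p^2\nmid n$ all must be orchestrated in parallel to the $k=0$ case.
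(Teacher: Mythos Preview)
Your inductive framework, base case, and cycle-class identification are exactly what the paper does. Where you diverge is in the second paragraph: you treat ``upgrading $\sigma_k$ to an integral $p$-torsion lift'' as the hard part and propose to rerun the entire transfer machinery of Corollary~\ref{existence of rho'} and Lemma~\ref{construct rho0} in degree $p^{k+1}+1$. The paper bypasses this completely. It simply takes \emph{any} integral lift $\rho_{p,k}$ of $\mathscr{S}^{p^k}(\bar\rho_{p,k-1})$ (such lifts exist trivially since $A^*_{PGL_n}\to A^*_{PGL_n}\otimes\mathbb{Z}/p$ is surjective), invokes Proposition~\ref{Maximal torus} to see it is torsion, then Proposition~\ref{Vezzosi n torsion} to see it is $n$-torsion, and finally multiplies by an integer to make it $p$-torsion---precisely the rescaling trick you already cite at the end of your second paragraph.

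The point you are missing is that the transfer argument was needed only for $k=0$, where the problem is to manufacture a class in $A^*_{PGL_n}$ out of Vistoli's class $\rho_p\in A^*_{PGL_p}$; once $\rho_{p,0}$ lives in $A^*_{PGL_n}$, the Steenrod operations $\mathscr{S}^{p^k}$ keep you there automatically. Moreover, your worry about preserving the restriction to $r_k\in A^*_{C_p\times\mu_p}$ is unfounded: any integral lift of $\sigma_k$ restricts to an integral lift of $\bar r_k$, and since $A^{>0}_{C_p\times\mu_p}$ is $p$-torsion this forces the restriction to equal $r_k$ on the nose. So your approach would work, but it expends a page of argument on a step the paper dispatches in one line.
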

\begin{proof}
Let us recall that for the short exact sequence $0\rightarrow\Z_{(p)}\xrightarrow{\times p}\Z_{(p)} \rightarrow\Z/p\rightarrow0$, there is an associated long exact sequence of motivic cohomology groups as given in \eqref{eq:mot les}
\begin{equation*}
\cdots\xrightarrow{B}H^{s,t}_{PGL_n}[p]\xrightarrow{\times p}H^{s,t}_{PGL_n}[p]\rightarrow H^{s,t}_{PGL_n}(p)\xrightarrow
{B}H^{s+1,t}_{PGL_n}[p]\xrightarrow{\times p}\cdots.
\end{equation*}
Hence, a class $a\in H^{s+1,t}_{PGL_n}[p]$ is a $p$-torsion class  if and only if $a=B(b)$ for some $b\in H^{s,t}_{PGL_n}(p)$.

Therefore, we have $b_{p,0}\in H^{2p+1,p+1}_{PGL_n}(p)$ satisfying $B(b_{p,0})=\rho_{p,0}$. By Proposition \ref{Steenrodcycleclass}, we have
\begin{equation}\label{eq:b(p,0)}
B\cdot\opn{cl}(b_{p,0})=\opn{cl}\cdot B(b_{p,0})=\opn{cl}(\rho_{p,0})=y_{p,0}.
\end{equation}
Now we consider the group $H^{2p+1}_{PGL_n}[p]$. Theorem 1.2 of \cite{Gu} asserts that the torsion subgroups of $H^k_{PGL_n}[p]$ are $0$ for $3<k<2p+2$. On the other hand, the rational cohomology ring $H^*_{PGL_n}\otimes\mathbb{Q}$ concentrates in even degrees. Therefore we have
\begin{equation}\label{eq:2p+1}
H^{2p+1}_{PGL_n}[p]=0,
\end{equation}
from which we deduce that the Bockstein homomorphism
\begin{equation}\label{eq:delta 2p+1}
B:H^{2p+1}_{PGL_n}(p)\hookrightarrow H^{2(p+1)}_{PGL_n}[p]
\end{equation}
is injective. The class $\opn{cl}(b_{p,0})$ is determined, by \eqref{eq:b(p,0)} and \eqref{eq:delta 2p+1}, as the unique class in $B^{-1}(y_{p,0})$. This class has been mentioned before. Recall that in Section \ref{SecKZ3} we define classes $x_{p,k}\in H^{2p^{k+1}+1}(K(\Z,3);\Z/p)$ for $k\geq 0$, satisfying $B(x_{p,k})=y_{p,k}$. As before, we denote by $x_{p,k}$ the image of itself via the map
\[\chi^*:H^{2p^{k+1}+1}(K(\Z,3);\Z/p)\rightarrow H^{2p^{k+1}+1}_{PGL_n}(p),\]
omitting the notation $\chi^*$. Therefore, we have
\begin{equation}\label{eq:cl b(p,0)}
\opn{cl}(b_{p,0})=x_{p,0}=\scP^1(\bar{x}_1).
\end{equation}

Inductively, we define \[b_{p,k}=\scS^{p^k}(b_{p,k-1})\in H^{2p^{k+1}+1,p^{k+1}+1}_{PGL_n}\]
and
\[\rho_{p,k}=B(b_{p,k})\in H^{2p^{k+1}+2,p^{k+1}+1}_{PGL_n}=A^{p^{k+1}+1}_{PGL_n}.\]

We verify $\opn{cl}(b_{p,k})=x_{p,k}$. For $k=0$, this follows from \eqref{eq:cl b(p,0)}. By induction on $k$, we have
\[\opn{cl}(b_{p,k})=\opn{cl}\cdot\scS^{p^k}(b_{p,k-1})=\scP^{p^k}\cdot\opn{cl}(b_{p,k-1})=\scP^{p^k}(x_{p,k-1})=
x_{p,k}.\]
Therefore we have
\[\opn{cl}(\rho_{p,k})=\opn{cl}\cdot B(b_{p,k})=B\cdot\opn{cl}(b_{p,k})=B(x_{p,k})=y_{p,k}.\]
\end{proof}
\begin{remark}
The proof above together with the Adem relation \eqref{eq:MotAdem1} show that we have
\[\scS^{p^k}(\bar{\rho}_{p,k-1})=\bar{\rho}_{p,k}.\]
\end{remark}




We proceed to give a corollary of Theorem \ref{thm2p^k+2}.
\begin{corollary}\label{transdimension}
For singular cohomology, the composite of the restrictions
\[\theta_H^*: H^*_{PGL_n}\xrightarrow{\Delta^{*}} H^*_{PGL_p}\rightarrow H^*_{C_p\times\mu_p} \]
takes $y_{p,k}$ to $\xi\eta(\xi^{p^{k+1}-1}-\eta^{p^{k+1}-1})$, and $x_1$ (the canonical generator of $H^3_{PGL_p})$ to $\zeta$.

For the Chow ring, similarly, when $p^2\nmid n$, the composite
\[\theta_A^*: A^*_{PGL_n}\xrightarrow{\Delta^{*}} A^*_{PGL_p}\rightarrow A^*_{C_p\times\mu_p} \]
takes $\rho_{p,k}$ to $\xi\eta(\xi^{p^{k+1}-1}-\eta^{p^{k+1}-1})$.

Finally, when $n=p$, the homomorphism $H^*_{PGL_p}\rightarrow H^*_{C_p\times\mu_p}$ (resp. $A^*_{PGL_p}\rightarrow A^*_{C_p\times\mu_p}$) restricts to a monomorphism on the subalgebra generated by $\{y_{p,k}\}_{k\geq0}$ (resp. $\{\rho_{p,k}\}_{k\geq0}$).
\end{corollary}
\begin{proof}
In the case $n=p$, it follows immediately from Vistoli's work, as given in \eqref{eq:Vistoli Pro11.1} that the class $\rho_{p,0}$ restricts to $\xi\eta(\xi^{p-1}-\eta^{p-1})$. By the construction of $\rho_{p,k}$ via the Steenrod power operations, the classes $\rho_{p,k}$ restrict to $\xi\eta(\xi^{p^{k+1}-1}-\eta^{p^{k+1}-1})$.

Applying the cycle class map, one sees that the classes $y_{p,k}$ restrict to $\xi\eta(\xi^{p^{k+1}-1}-\eta^{p^{k+1}-1})$. Since we have
\[y_{p,0}=B\mathscr{P}^{1}(\bar{x}_1),\hspace{5 mm}\xi\eta(\xi^{p-1}-\eta^{p-1})=B\mathscr{P}^{1}(\bar{\zeta}),\]
the class $x_1$ restricts to $\zeta$.

The general case follows since the diagonal map $\Delta:PGL_p\rightarrow PGL_n$ restricts
$y_{p,k}\in H^*_{PGL_n}$ to $y_{p,k}\in H^*_{PGL_p}$ and $x_1\in H^*_{PGL_n}$ to $x_1\in H^*_{PGL_p}$, and when $p^2\nmid n$, the analogous statement can be made for $\rho_{p,k}$.

The last paragraph is immediately deduced from Proposition \ref{pro:Vistoli inj}.
\end{proof}

\section{Classes not in the Chern subring of $A^*_{PGL_n}$}\label{SecChern}
Let $G$ be a complex algebraic group. Recall that in the introduction we mentioned the refined cycle class map
\[\tilde{\opn{cl}}: A^*_G\rightarrow MU^*(\mathbf{B}G)\otimes_{MU^*}\mathbb{Z}.\]
In \cite{Tot} (page 2), Totaro conjectured that for $G$ such that the complex cobordism ring of $\mathbf{B}G$ is concentrated in even degrees after tensoring with $\mathbb{Z}_{(p)}$, for a fixed odd prime $p$, the map $\tilde{\opn{cl}}$ is an isomorphism after tensoring with $\mathbb{Z}_{(p)}$.

On the other hand, in \cite{Ko1}, Kono and Yagita showed that for $p=3$, the Brown-Peterson cohomology ring $BP^*(\mathbf{B}PGL_3)$ is not generated by Chern classes. This implies that, if Totaro's conjecture holds for $G=PGL_3$ and $p=3$, then the localized Chow ring $A^*_{PGL_3}\otimes\mathbb{Z}_{(3)}$ is not generated by Chern classes. This is verified by Vezzosi in \cite{Ve}. More precisely, he showed that the class $\rho_3$ is not in the Chern subring. In \cite{Ta}, Targa showed that the same holds for any odd prime $p$. In \cite{Ka1}, Kameko and Yagita showed a stronger result for $H^*(\mathbf{B}PU_p;\mathbb{Z})$, which easily generalizes to $A^*_{PGL_p}$ and $H^*_{PGL_p}$.

As a generalization of these results we have the following
\begin{theorem}[Theorem \ref{thmChern}]
Let $n>1$ be an integer, and $p$ one of its odd prime divisor, such that $p^2\nmid n$. Then the ring $A^*_{PGL_n}\otimes\mathbb{Z}_{(p)}$ is not generated by Chern classes. More precisely, the class $\rho_{p,0}^i$ is not in the Chern subring for $p-1\nmid i$.
\end{theorem}
\begin{proof}
Recall that $\bar{y}_{p,0}\in H^{2(p+1)}(\mathbf{B}PU_p;\mathbb{Z}/p)$ denotes the mod $p$ reduction of $\bar{y}_{p,0}$.
As mentioned above, in \cite{Ka1}, Kameko and Yagita show that $\bar{y}_{p,0}^i$, for $p-1\nmid i$, is not in the Chern subring of $H^{2(p+1)}(\mathbf{B}PU_p;\mathbb{Z}/p)$. Since $y_{p,0}^i\in H^*_{PGL_p}$ is a $p$-torsion class itself, it is not in the Chern subring of $H^*_{PGL_p}$. Applying the cycle class map, we see that similarly $\rho_p^i$ is not in the Chern subring of $A^*_{PGL_p}$.

On the other hand, it follows from Lemma \ref{construct rho0} that we have $\Delta^*(\rho_{p,0}^i)=\rho_p^i$, where \[\Delta: \mathbf{B}PGL_p\rightarrow \mathbf{B}PGL_n\]
is induced by the obvious diagonal map. It then follows that $\rho_{p,0}^i$ is not in the Chern subring.
\end{proof}
\begin{remark}
In \cite{Ka1}, Kameko and Yagita considered the class  $Q_0Q_1x_2$ where $x_2$ is a generator of $H^2(\mathbf{B}PU_p;\Z/p)$, and $Q_0,Q_1$ are among the Milnor basis constructed in \cite{milnor1958steenrod}. It follows from the argument in Remark \ref{rem:Milnor basis} that we have $\bar{y}_{p,0}=Q_0Q_1x_2$.
\end{remark}
\bibliographystyle{abbrv}
\bibliography{JLMSsom_tor_ref_v4}
\end{document}